\newtheorem{definition}{Definition}
\newtheorem{theorem}[definition]{Theorem}
\newtheorem{lemma}[definition]{Lemma}
\DeclareMathOperator{\sign}{sign}
\DeclareMathOperator{\dist}{dist}
\DeclareMathOperator{\closure}{clos}
\DeclareMathOperator{\conv}{conv}
\DeclareMathOperator{\supp}{supp}
\DeclarePairedDelimiter{\ceil}{\lceil}{\rceil}
\DeclarePairedDelimiter{\floor}{\lfloor}{\rfloor}
\begin{document}

\def\C{\mathbb {C}}
\def\N{\mathbb {N}}
\def\R{\mathbb {R}}
\def\S{\mathbb {S}}
\def\H{\mathbb {H}}
\def\Z{\mathbb {Z}}
\newcommand{\map}[3]{\ensuremath{#1\colon #2\to #3}}
\def\h{\mbox{\boldmath$h$}}

\def\qed{\rule{1ex}{1ex}}

\title[Mesh-independent a priori bounds]{Mesh-independent a priori bounds for nonlinear elliptic finite difference boundary value
  problems}

\author{P.J.~McKenna}
\address{P.J.~McKenna \hfill\break
Department of Mathematics, University of Connecticut, \hfill\break
Storrs, CT-06269, USA}
\email{mckenna@math.uconn.edu}

\author{W.~Reichel}
\address{W. Reichel \hfill\break
Institut f\"ur Analysis, Karlsruhe Institute of Technology (KIT), \hfill\break
D-76128 Karlsruhe, Germany}
\email{wolfgang.reichel@kit.edu}

\author{A.~Verbitsky}
\address{A. Verbitsky \hfill\break
DFG Research Training Group 1294\hfill\break
Analysis, Simulation and Design of Nanotechnological Processes, \hfill\break
Karlsruhe Institute of Technology (KIT), \hfill\break
D-76128 Karlsruhe, Germany}
\email{anton.verbitsky@kit.edu}

\date{\today}

\subjclass[2000]{Primary: 35J66, 39A14; Secondary: 34B18}
\keywords{finite difference equations, nonlinear boundary value problems, critical exponent, a-priori bounds}

\begin{abstract} In this paper we prove mesh independent a priori $L^\infty$-bounds for positive solutions of the finite difference boundary value problem 
$$
-\Delta_h u = f(x,u) \mbox{ in } \Omega_h, \quad u=0 \mbox{ on } \partial\Omega_h,
$$
where $\Delta_h$ is the finite difference Laplacian and $\Omega_h$ is a discretized $n$-dimensional box. On one hand this completes a result of \cite{mcr} on the asympotic symmetry of solutions of finite difference boundary value problems. On the other hand it is a finite difference version of a critical exponent problem studied in \cite{mcr2}. Two main results are given: one for dimension $n=1$ and one for the higher dimensional case $n\geq 2$. The methods of proof differ substantially in these two cases. In the 1-dimensional case our method resembles ode-techniques. In the higher dimensional case the growth rate of the nonlinearity has to be bounded by an exponent $p<\frac{n}{n-1}$ where we believe that $\frac{n}{n-1}$ plays the role of a critical exponent. Our method in this case is based on the use of the discrete Hardy-Sobolev inequality as in \cite{BT} and on Moser's iteration method. We point out that our a priori bounds are (in principal) explicit.
\end{abstract}

\maketitle

\section{Introduction}
The purpose of this paper is two-fold. The first aim is motivated by the general principle
that when there exist major results for a semilinear elliptic  boundary value problem,
then if we formulate a reasonable discretization of this boundary value problem
with a view to finding approximate solutions,  then there should exist analogous results
for the corresponding discretized problem. A typical example of this idea may be found in \cite{mcr}.

\medskip

A well-known theorem of Gidas, Ni, and Nirenberg, \cite{GNN},   states roughly that positive
solutions of the semi linear elliptic boundary value problem
\begin{equation}\label{basic_cont}
-\Delta u =f(u), \quad x\in  \Omega, \quad u(x)=0, \quad x\in \partial \Omega
\end{equation}
inherit symmetries of the domain $\Omega$. For example, if  $\Omega$ is a ball,
then all positive solutions must be radially symmetric.  If $\Omega$ is a hypercube,
then all positive solutions must be symmetric about the bisecting hyperplanes.

\medskip

It is natural to ask whether there is a corresponding result for the corresponding
discretized problem. In other words, if we replace the Laplacian in equation \eqref{basic_cont}
with the corresponding finite difference Laplacian ($e_1,\ldots, e_n$ is the standard basis
of $\R^n$ and $h_1,\ldots,h_n>0$ stand for the mesh sizes)
$$
\Delta_h u(x) := \sum_{i=1}^n \frac{u(x+h_ie_i)-2 u(x) +u(x-h_ie_i)}{h_i^2}
$$
we obtain the following finite difference version of \eqref{basic_cont}
\begin{equation}
-\Delta_hu = f(u), \quad x\in  \Omega_h, \quad u(x)=0, \quad x\in \partial \Omega_h.
\label{basic_0}
\end{equation}
The questions is: does a  discrete solution satisfy the same type of symmetries?
(Assuming, of course, that the discretized grid reflects these symmetries.)
The answer is no, even in one dimension. Easy counterexamples of this can be found in \cite{mcr}.
The problem is that there is no restriction on the mesh size.  One should only expect
the (positive) solutions of  equation (2) to reflect those of equation (1)
when the mesh sizes are {\it small}.

\medskip

This gives a clue to the correct result, also in \cite{mcr}, which can be roughly summarized as follows;  {\it as the space step of the discretization
becomes small, the solutions $u_h$ become approximately symmetric. }  Concrete estimates of the distance  from symmetry are given in terms of
the difference between the solution and its reflection about the bisecting hyperplane.
Full details can be found in \cite{mcr}.
This is an example of the general principle mentioned above.  If the discretization is sufficiently fine, the properties of the continuous
solution of (1) should be reflected
in the properties of the discretized solution of (2).

\medskip

Of course, several technical assumptions, both on the Lipschitz constants for $f$
and the behavior of the approximate solutions $u_h$ are required.
One key assumption was that there exists $M>0$ such that  $\| u_h \|_{\infty} \leq M$. Since
this can often be obtained for a wide variety of nonlinearities via the discrete maximum principle,
we felt, at the time, that this assumption was not unreasonable, and indeed natural
in the numerical context. After all, if $\|u_h\|_{\infty} \rightarrow \infty$ as a subsequence
of the $h\rightarrow 0$, one would naturally think that we were not in the neighborhood of a true
classical or weak solution. However, as recently observed in \cite{mcr2} and \cite{hmr}, the blow-up of the $\|\cdot\|_\infty$-norm of a family of finite-difference solutions may indicate the existence of an unbounded distributional solution induced by a supercritical exponent in the nonlinearity, cf. Remark (a) after Theorem~\ref{main}.

\medskip

However, there is another large class of nonlinearities, to which the maximum principle
is not applicable, but for which the true solutions of \eqref{basic_cont} can be shown to satisfy
a priori bounds. A summary of these results are to be found in \cite{mcr2}.
To extend some of the results of \cite{mcr} to a discrete setting is the second aim of this paper.

\medskip

These are the two goals mentioned at the beginning of this introduction. First,
proving these a priori estimates will achieve the aim of proving corresponding results
for solutions of the discretized problems. Second, it will extend to a much wider class
of nonlinearities the approximate symmetry results of \cite{mcr}.

\medskip

A priori estimates will be proven for positive solutions of the following generalization
of \eqref{basic_0}
\begin{equation}
\label{basic}
-\Delta_h u = f(x,u) \mbox{ in } \Omega_h, \quad u=0 \mbox{ on } \partial \Omega_h
\end{equation}
where $\Omega=(a_1,b_1)\times\ldots\times(a_n,b_n)\subset \R^n$ is an $n$-dimensional box and $\Omega_h$ are the finitely many points of a suitable mesh belonging to $\Omega$. 

\begin{theorem}[A priori bounds in dimension $n=1$]
Let $L > 0$, $\Omega=(-L,L)$ and assume that there are continuous functions
$f:\overline{\Omega}\times [0,\infty)\to \R$, $g:[0,\infty)\to \R$ and a constant $K>0$ such that
\renewcommand{\theenumi}{\roman{enumi}}
\begin{enumerate}
\item $g(0)>0$, $g$ is strictly increasing and with $G(s):=\int_{K}^{s} g(t) \; dt$ one has
$$\lim_{s \to \infty} \frac{s}{\sqrt{G(s)}} = 0;$$
\item $f(x,s) \ge g(s) \text{ for all }s \ge K \text{ and for all } x \in \overline{\Omega}$.
\end{enumerate}
Moreover, let $L \ge 4 h$.
Then there exists a constant $\overline{M}$, which is independent of the mesh size $h$,
such that if $u: \overline{\Omega}_{h}\to [0,\infty)$ is a non-negative solution
of \eqref{basic} then $\|u\|_\infty \leq \overline{M}$.
\renewcommand{\theenumi}{\arabic{enumi}}
\label{main_1d}
\end{theorem}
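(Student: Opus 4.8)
The plan is to prove that $M:=\|u\|_\infty$ is bounded by a constant depending only on $L$, $K$ and the data $f,g$, not on $h$. Label the grid $x_j=-L+jh$, $0\le j\le N$, $Nh=2L\ge 8h$, so that $u_0=u_N=0$ and $-\Delta_h u_j=f(x_j,u_j)$ for $1\le j\le N-1$. Since $f$ is continuous it is bounded below on $\overline\Omega\times[0,K]$; fix $C_0\ge 0$ with $f(x,s)\ge -C_0$ there and set
\[
  \psi(s):=\begin{cases} g(s)+C_0, & s\ge K,\\ 0, & s<K,\end{cases}
  \qquad
  \Psi(\tau):=\int_0^\tau\psi(s)\,ds,
\]
so that $\Psi(\tau)=G(\tau)+C_0(\tau-K)$ for $\tau\ge K$, $\psi$ is non-decreasing, and by (ii) $f(x_j,u_j)+C_0\ge\psi(u_j)$. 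The key auxiliary object is the parabola-corrected solution $w_j:=u_j-\tfrac{C_0}{2}x_j^2$. Since $\Delta_h(x_j^2)\equiv 2$ one gets $\Delta_h w_j=-f(x_j,u_j)-C_0\le-\psi(u_j)\le-\psi(w_j)$, the last step using $u_j\ge w_j$ and monotonicity of $\psi$. In particular $\Delta_h w_j\le 0$, so $b_j:=(w_{j+1}-w_j)/h$ is non-increasing: $w$ is \emph{discretely concave}. Hence $w$ attains its maximum $W:=w_{m^*}$ at a peak $m^*$ (which must be interior, since $w$ equals $-\beta:=-\tfrac{C_0}{2}L^2$ at both endpoints), is monotone on each side of $m^*$, and $W\ge M-\beta$. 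Reflecting if necessary we may assume $N-m^*\ge N/2\ge 4$, i.e.\ there are at least four mesh intervals between the peak and the right endpoint; this is where $L\ge 4h$ enters.

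To the right of $m^*$ I now run a discrete energy / time-map estimate. Put $\delta_j:=w_{j-1}-w_j=-hb_{j-1}$; then $(\delta_j)$ is non-decreasing, $\delta_j\ge 0$ for $j\ge m^*+1$, and the equation gives $\delta_{j+1}-\delta_j=-h^2\Delta_h w_j\ge h^2\psi(w_j)\ge 0$. Multiplying by $\delta_{j+1}+\delta_j\ge\delta_{j+1}=w_j-w_{j+1}$ and using monotonicity of $\psi$ yields the one-sided discrete identity
\[
  \delta_{j+1}^2-\delta_j^2\ \ge\ h^2\psi(w_j)(w_j-w_{j+1})\ \ge\ h^2\bigl(\Psi(w_j)-\Psi(w_{j+1})\bigr)\qquad(j\ge m^*+1).
\]
Telescoping from $m^*+1$ to $k$ gives $\delta_{k+1}^2\ge h^2\bigl(\Psi(P)-\Psi(w_{k+1})\bigr)$ with $P:=w_{m^*+1}$, i.e.\ $w_k-w_{k+1}\ge h\sqrt{\Psi(P)-\Psi(w_{k+1})}$. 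Dividing and summing over $k=m^*+1,\dots,N-1$, and comparing the resulting Riemann-type sum with an integral (legitimate since $w$ decreases from $P$ to $-\beta$ on this range and $\Psi$ is non-decreasing), I obtain
\[
  \tfrac34 L\ \le\ L-h\ \le\ (N-m^*-1)h\ \le\ \int_{-\beta}^{P}\frac{d\tau}{\sqrt{\Psi(P)-\Psi(\tau)}}\,.
\]

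The proof is then closed by a dichotomy on $P=w_{m^*+1}$. If $P\ge W/2$: hypothesis (i) ($s/\sqrt{G(s)}\to 0$) forces $G(s)/s^2\to\infty$ and hence, $g$ being increasing, $g(s)/s\to\infty$; splitting the last integral at $P/2$ and exploiting this superlinearity shows $\int_{-\beta}^{P}(\Psi(P)-\Psi(\tau))^{-1/2}\,d\tau\to 0$ as $P\to\infty$, so the lower bound $\tfrac34 L$ forces $P$, hence $W$, hence $M\le W+\beta$, to be bounded by a constant independent of $h$. If instead $P<W/2$: then $\delta_{m^*+1}=W-P>W/2$, and since $(\delta_j)_{j\ge m^*+1}$ is non-decreasing, $W+\beta=\sum_{j=m^*+1}^{N}\delta_j>(N-m^*)\tfrac{W}{2}\ge 2W$, whence $W<\beta$ and $M\le W+\beta<C_0L^2$. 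In both cases $M\le\overline M$ for an explicit $\overline M$ depending only on $L$, $K$, $C_0$ and $G$. The crux — and the main obstacle — is that the continuous conservation law $\tfrac12(w')^2+\Psi(w)=\mathrm{const}$ has no exact discrete counterpart; this is circumvented by the one-sided inequality above, which points in the \emph{useful} direction precisely because the second difference $\delta_{j+1}-\delta_j$ has a fixed sign, together with the separate handling (case $P<W/2$) of the first drop $W-P$, which on coarse meshes need not be small.
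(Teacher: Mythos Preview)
Your argument is correct and follows a genuinely different route from the paper's. The paper works directly with $u$: it first restricts attention to the maximal stretch $[x_0,x_1]_h$ where $u\ge K$, derives a one-sided energy inequality $(D_h^+u)^2\ge G(R)-G(u(\cdot+h))$ with $R=u(x_0+h)$, and uses it to show that $x_1-x_0\le 2R/\sqrt{G(R)}$ shrinks as $R$ grows. It then needs a second, completely separate ingredient: a \emph{linear} comparison on the tail $[x_1,L]_h$ with the explicit solution of a discrete Poisson problem (their Lemma~\ref{poisson} together with Lemma~\ref{lm:DISC_ELL_COMP}), which furnishes an upper bound for $-D_h^+u(x_1)$ that, combined with the lower bound from the energy step, forces $G(R)/R^2$ to stay bounded and hence $R$ to be bounded.

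Your parabola-corrected function $w_j=u_j-\tfrac{C_0}{2}x_j^2$ replaces this two-stage machinery by a single globally concave object satisfying $-\Delta_h w\ge\psi(w)$. Concavity lets you run the discrete energy inequality on the entire right half of the peak, yielding the time-map bound $\tfrac34 L\le\int_{-\beta}^{P}(\Psi(P)-\Psi(\tau))^{-1/2}d\tau$, so no linear comparison lemma is needed. The price is that the energy inequality starts one step past the peak, at $P=w_{m^*+1}$ rather than at $W=w_{m^*}$; your dichotomy $P\ge W/2$ versus $P<W/2$ handles this cleanly, the second case being dispatched by a simple counting argument exploiting $N-m^*\ge 4$. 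One tiny point you might make explicit: if $P\le K$ the time-map integral is not defined, but then $W\le 2K$ (in the first case) and the conclusion is immediate. Overall your proof is shorter and avoids the auxiliary Lemmas~\ref{lm:DISC_ELL_COMP} and~\ref{poisson}; the paper's approach, on the other hand, keeps closer contact with the classical ODE time-map argument for the continuous problem.
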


\begin{theorem}[A priori bounds in dimension $n\geq 2$]
\label{main}
Let $n\geq 2$ and let $\Omega=(a_1,b_1)\times\ldots\times(a_n,b_n)\subset\R^n$ be a bounded $n$-dimensional box.
Suppose there exist constants $C_1, C_2, C_3>0$ and $f: \Omega\times [0,\infty)\to [0,\infty)$ such that
\begin{itemize}
\item[(i)] $f(x,s) \geq \lambda s - C_1$ for all $x\in\Omega$ and all $s\geq 0$ and for $\lambda>\lambda_1=\sum_{i=1}^n \frac{\pi^2}{(b_i-a_i)^2}$,
\item[(ii)] $f(x,s) \leq C_2 s^p+C_3$ for some $p\in (1,\frac{n}{n-1})$.
\end{itemize}
Then there exists a constant $\overline{M}$, which is independent of the mesh size $h$,
such that if $u:\overline{\Omega}_h\to [0,\infty)$ is a non-negative solution of \eqref{basic}
then $\|u\|_\infty \leq \overline{M}$.
\end{theorem}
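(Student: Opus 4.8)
\emph{Proof strategy.} The plan is to carry out, in the discrete setting, a three-step Brezis-Turner type scheme based on the discrete Hardy-Sobolev inequality of \cite{BT}. Step~1: test \eqref{basic} against a discrete first eigenfunction to obtain a mesh-independent \emph{weighted $L^1$-bound} $\sum_{\Omega_h}f(x,u)\,\dist(x,\partial\Omega)\le C$; this is where hypothesis~(i) is used. Step~2: upgrade this, via a duality argument resting on the discrete Hardy-Sobolev inequality, to a mesh-independent bound $\|u\|_{L^q_h}\le C$ for every $q<\tfrac{n}{n-1}$. Step~3: a \emph{Moser iteration}, using hypothesis~(ii) and the discrete Sobolev inequality, bootstrapping this to a mesh-independent $L^\infty$-bound. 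Throughout, $\|\cdot\|_{L^p_h}$ denotes the $\ell^p$-norm on $\Omega_h$ taken with the cell weight $h_1\cdots h_n$, and the recurring difficulty is that every discrete functional inequality must be invoked with a constant independent of $h$ (and uniform up to $\partial\Omega$).

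\emph{Step 1.} Put $\psi_h(x)=\sum_{i=1}^n\sin\!\big(\tfrac{\pi(x_i-a_i)}{b_i-a_i}\big)$. A short computation using $\sin(\alpha+\beta)+\sin(\alpha-\beta)=2\sin\alpha\cos\beta$ shows $-\Delta_h\psi_h\le\mu_h\psi_h$ on $\Omega_h$, with $\mu_h=\max_{1\le i\le n}\tfrac{4}{h_i^2}\sin^2\!\big(\tfrac{\pi h_i}{2(b_i-a_i)}\big)<\lambda_1<\lambda$, while $\psi_h=0$ on $\partial\Omega_h$, $\psi_h>0$ inside, and $c\,\dist(x,\partial\Omega)\le\psi_h(x)\le C$ uniformly in $h$. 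Multiplying \eqref{basic} by $\psi_h$, summing over $\Omega_h$, summing by parts ($u$ and $\psi_h$ both vanish on $\partial\Omega_h$), and inserting $f(x,u)\ge\lambda u-C_1$ yields
\[
\lambda\sum_{\Omega_h}u\,\psi_h-C_1\sum_{\Omega_h}\psi_h\;\le\;\sum_{\Omega_h}f(x,u)\,\psi_h\;=\;\sum_{\Omega_h}u\,(-\Delta_h\psi_h)\;\le\;\mu_h\sum_{\Omega_h}u\,\psi_h .
\]
Since $\lambda-\mu_h\ge\lambda-\lambda_1>0$ uniformly, $\sum_{\Omega_h}u\,\psi_h\le M_0$, hence (from the equality above) $\sum_{\Omega_h}f(x,u)\,\dist(x,\partial\Omega)\le M_1$, with $M_0,M_1$ independent of $h$.

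\emph{Steps 2 and 3.} By the discrete Hardy-Sobolev inequality of \cite{BT} --- equivalently, by the $h$-independent Green's function bound $0\le G_h(x,y)\le C\,\dist(x,\partial\Omega)\,(|x-y|+\max_ih_i)^{1-n}$ for the discrete Dirichlet Laplacian on the box --- any $\varphi$ solving $-\Delta_h\varphi=g\ge0$ in $\Omega_h$, $\varphi=0$ on $\partial\Omega_h$, satisfies $\|\varphi/\dist(\cdot,\partial\Omega)\|_\infty\le C\,\|g\|_{L^s_h}$ for every $s>n$: H\"older's inequality bounds $\varphi(x)/\dist(x,\partial\Omega)$ by $C\|g\|_{L^s_h}$ times $\big(\sum_{y\in\Omega_h}(|x-y|+\max_ih_i)^{-(n-1)s'}h_1\cdots h_n\big)^{1/s'}$, and the last sum is bounded uniformly in $h$ and $x$ because $(n-1)s'<n$. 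Applying this with $\varphi=\eta$, $g=\xi\ge0$ and summing \eqref{basic} against $\eta$, Step~1 gives $\sum_{\Omega_h}u\,\xi=\sum_{\Omega_h}f(x,u)\,\eta\le\|\eta/\dist(\cdot,\partial\Omega)\|_\infty\sum_{\Omega_h}f(x,u)\,\dist(\cdot,\partial\Omega)\le C\|\xi\|_{L^s_h}$, so by duality $\|u\|_{L^{s'}_h}\le C$ for every $s'<\tfrac{n}{n-1}$, uniformly in $h$. For the Moser step, fix $q_0<\tfrac{n}{n-1}$ with $q_0>\max\{p,\tfrac{n(p-1)}{2}\}$ --- possible exactly because $p<\tfrac{n}{n-1}$ --- so that $\|u\|_{L^{q_0}_h}\le C$. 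For $\beta\ge1$, multiplying \eqref{basic} by $u^{2\beta-1}$, summing by parts, and using the elementary inequality $(a-b)(a^{2\beta-1}-b^{2\beta-1})\ge\tfrac{2\beta-1}{\beta^2}(a^\beta-b^\beta)^2$ $(a,b\ge0)$ together with hypothesis~(ii) gives $\tfrac{2\beta-1}{\beta^2}\|\nabla_hu^\beta\|_{L^2_h}^2\le C_2\|u\|_{L^{p+2\beta-1}_h}^{p+2\beta-1}+C_3\|u\|_{L^{2\beta-1}_h}^{2\beta-1}$ (with $\nabla_h$ the discrete gradient); combining with the discrete Sobolev inequality $\|u^\beta\|_{L^{2^*}_h}\le C_S\|\nabla_hu^\beta\|_{L^2_h}$ ($2^*=\tfrac{2n}{n-2}$ for $n\ge3$, any finite exponent for $n=2$; all constants $h$-independent) and choosing $\beta_k$ with $p+2\beta_k-1=q_k$ yields a recursion with $q_{k+1}=\tfrac{2^*}{2}(q_k-p+1)$; since $q_0>\tfrac{n(p-1)}{2}$ the exponents $q_k$ increase geometrically to $+\infty$ while the compounded constants stay bounded, so $\|u\|_{L^q_h}\le C$ for all finite $q$. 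Then $f(x,u)\le C_2u^p+C_3$ is bounded in $L^s_h$ for some $s>n$, and the Green's function estimate above (now with $\varphi=u$) gives $\|u\|_\infty=\|\dist(\cdot,\partial\Omega)\|_\infty\,\|u/\dist(\cdot,\partial\Omega)\|_\infty\le C\,\|f(\cdot,u)\|_{L^s_h}\le\overline M$, independently of $h$.

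The part I expect to be hardest is Step~2: establishing the discrete Hardy-Sobolev (equivalently, discrete Green's function) bounds with a constant independent of the mesh size and uniform up to the boundary of the box, corners included --- by comparison the $h$-independence of the discrete Sobolev and Poincar\'e inequalities used in Steps~1 and~3 is more routine. This step also pins down the admissible range: the integrability $q<\tfrac{n}{n-1}$ it provides, combined with the requirement $q_0>p$ needed to start the Moser iteration, is exactly what forces $p<\tfrac{n}{n-1}$, which is why $\tfrac{n}{n-1}$ plays the role of a critical exponent for this method.
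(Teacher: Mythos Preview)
Your overall Brezis--Turner scheme matches the paper's, but Step~1 contains a concrete error that propagates through the rest. The function $\psi_h(x)=\sum_{i=1}^n\sin\bigl(\pi(x_i-a_i)/(b_i-a_i)\bigr)$ does \emph{not} vanish on $\partial\Omega_h$: on the face $\{x_1=a_1\}$ the first summand is zero but the remaining ones are positive. Hence the summation-by-parts identity $\sum_{\Omega_h}(-\Delta_h u)\psi_h=\sum_{\Omega_h}u(-\Delta_h\psi_h)$ acquires boundary terms of the form $h_i^{-1}\sum_{x'\in\text{face}}u(a_i+h_i,x')\,\psi_h(a_i,x')\,(\text{face measure})$, and these are positive and \emph{not} a~priori bounded in $h$. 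Your inequality $(\lambda-\mu_h)\sum u\psi_h\le C$ therefore does not follow, and the weighted bound $\sum f(x,u)\,\dist(x,\partial\Omega)\le M_1$ is unproven.

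This is not a cosmetic slip. If you repair Step~1 by taking the genuine first eigenfunction $\phi_{1,h}=\prod_i\sin(\pi(x_i-a_i)/(b_i-a_i))$, which \emph{does} vanish on $\partial\Omega_h$, then near a corner $\phi_{1,h}$ is only $\gtrsim\dist(x,\partial\Omega_h)^n$, not $\dist$; the paper's Lemma on the eigenfunction gives exactly this. So the testing step yields only $\sum f(x,u)\,\dist^n\le C$, which is strictly weaker than what your duality argument in Step~2 needs: the estimate $\|\eta/\dist\|_\infty\le C\|\xi\|_{L^s_h}$ combines with $\sum f\,\dist$, not with $\sum f\,\dist^n$, and there is no uniform bound $\|\eta/\dist^n\|_\infty\le C\|\xi\|_{L^s_h}$ (away from corners $G_h(x,y)/\dist(x)^n$ blows up like $\dist(x)^{1-n}$). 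In fact, on a box no test function vanishing on $\partial\Omega_h$ and satisfying $-\Delta_h\psi\le\mu\psi$ can be bounded below by $c\,\dist$ uniformly in $h$: the corner forces $\psi$ to vanish to order $n$ there. The paper deals with this by abandoning the Green's-function duality route altogether: it tests \eqref{basic} against $u$ itself, splits via H\"older with the weight $\phi_{1,h}^{1/\tilde p'}$, and applies the discrete Hardy--Sobolev inequality of Theorem~\ref{th:HARDY_SOBOLEV} with $\beta=n(\tilde p-1)$ (the exponent $n$ coming precisely from the $\dist^n$ behaviour of $\phi_{1,h}$) to bound $\|u\|_D$ directly. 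Your Step~3 Moser iteration is then essentially the paper's Theorem~\ref{moser_nonlinear}.
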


\noindent
{\bf Remarks.} (a) Let us explain the role of the exponent $\frac{n}{n-1}$ from the point of view of differential equations rather then difference equations. As it has been shown in \cite{mcr2}, the exponent $p^\ast=\frac{n}{n-1}$ is a critical exponent for $n$-dimensional boxes in the following sense: if $1< p<p^\ast$ then every very-weak solution (a special form of a distributional solution) of $-\Delta u = u^p$ in $\Omega$ with $u=0$ on $\partial\Omega$ is in fact classical and if it is additionally positive then its $L^\infty$-norm is uniformly bounded. In contrast, for some $p>p^\ast$ one can construct unbounded very-weak solutions of the problem. Hence $p^\ast=\frac{n}{n-1}$ separates (uniformly) bounded solutions from unbounded very-weak solutions. Now, in the finite difference context, we meet the same exponent. We believe that for some $p>p^\ast$ one can construct a family of solutions of finite-difference boundary value problems of the above type where the $L^\infty$-norm of the solution tends to 
infinity as the mesh-size $h$ tends to zero. Some numerical evidence (in a finite-element context, however) is given in \cite{hmr}.

(b) For the one-dimensional as well as for the higher-dimensional case
explicit upper estimates for the value of $\overline{M}$ are immediate from the proofs. Since the formulas are highly complex we decided against writing the details. However, let us point out how such a formula is constructed: at the end of the proof of Theorem~\ref{main} an explicity upper bound for the $L^2$-norm of the discrete gradient of a solution $u$ is obtained, cf. formula \eqref{explicit_bound_D_norm}. If this is inserted into the result of Theorem~\ref{moser_nonlinear} then one obtains an \emph{explicit} upper bound for the $L^\infty$-norm of the solution $u$. 

\medskip

The plan of this paper is to first set up \eqref{basic}, the discretized version of equation
\eqref{basic_cont} in Section 2, and then in Section 3 to describe the discrete function spaces
in which we prove the a priori estimates. Also in Section 3 we state a number of important inequalities (Poincar\'{e}'s inequality, Sobolev's inequality, Hardy's inequality). The proof of theses inequalities is given in the Appendix. In Section 4 we treat the one-dimensional case and give the proof of Theorem~\ref{main_1d}. It is conceptually much different from the treatment of the higher-dimensional case. This is done in Section 5 with the essential ingredient of the Hardy-Sobolev inequality (cf. \cite{BT} where it is used to the same effect of obtaining $L^\infty$ a-priori bounds) and Moser's iteration scheme, which we adapt to the finite-difference context. We conclude with some remarks on some possible extensions and open questions.

\medskip

A final remark before we begin. Throughout the paper we will use the following notation.
For $a\in \R$ let $a_{+}=\max\{a,0\}$ and $a_{-}= \min\{a,0\}$. Let
$e_1,\ldots, e_n$ be the standard basis of $\R^n$ and let
$h := (h_1,\ldots,h_n)>0$ be the mesh size vector of a uniform mesh
$\R^n_h :=\{(h_1z_1,\ldots, h_nz_n): z_1,\ldots, z_n\in \Z\}$.  We use
the short-hand $\delta_i := h_ie_i\in \R^n$ and $\h := h_{1} \cdot \dots \cdot h_{n}$.


\section{Discretization of the domain and the Laplacian}

Consider a (possibly unbounded) $n$-dimensional Euclidean box (also called hypercube)
$$
\Omega = (a_1,b_1)\times\ldots\times(a_n,b_n)\subset \R^n
$$
together with its closure $\overline{\Omega}$ and boundary $\partial\Omega$. We
assume that
$$
a_i = k_i h_i, \quad b_i = l_i h_i \mbox{ with } l_{i} - k_{i} > 1 \mbox{ and } k_i,l_i
\in \Z\cup\{-\infty,\infty\} \mbox{ and } 1 \le i \le n.
$$
Note that we allow $\Omega$ to have unbounded directions. For a box $\Omega$ let us define
discretizations of $\overline{\Omega}$, $\Omega$ and $\partial\Omega$:
$$
\begin{array}{rll}
\overline{\Omega}_h &:= \overline{\Omega} \cap \R^n_h &\mbox{ the set of grid
  points},\vspace{\jot}\\
\Omega_h &:= \Omega \cap \R^n_h &\mbox{ the set of interior grid points},
  \vspace{\jot}\\
\partial \Omega_h &:= \partial\Omega\cap \R^n_h & \mbox{ the set of boundary
  grids points}.
\end{array}
$$
If $\Omega$ is bounded, we also define
  \begin{equation*}
    |\Omega_{h}| := |\Omega| := \prod_{i=1}^{n} (b_{i} - a_{i}).
  \end{equation*}
A finer description of the discrete boundary $\partial\Omega_h$ is given as
follows:
$$
\begin{array}{ll}
\partial_i^+\Omega_h := \partial\Omega_h \cap \{x\in \R^n: x_i=b_i\}
 &\mbox{forward boundary points in direction } e_i, \vspace{\jot}\\
\partial_i^-\Omega_h :=  \partial\Omega_h \cap \{x\in \R^n: x_i=a_i\}
 &\mbox{backward boundary points in direction } e_i.
\end{array}
$$
Let $u:\overline{\Omega}_h\to \R$ be a given function. Our basic concept is
the forward and backward finite difference quotient defined as
$$
\begin{array}{rcll}
D_i^+ u(x) &:=& \displaystyle\frac{u(x+\delta_i)-u(x)}{h_i} & \mbox{ for
} x\in \overline{\Omega}_h\setminus\partial_i^+\Omega_h, \vspace{\jot}\\
D_i^- u(x) &:=& \displaystyle\frac{u(x)-u(x-\delta_i)}{h_i} & \mbox{ for
} x\in \overline{\Omega}_h\setminus\partial_i^-\Omega_h
\end{array}
$$
for $i=1,\ldots,n$.

\begin{definition} Let $u:\overline{\Omega}_h\to\R$. For all $x\in \Omega_h$
the discrete Laplace operator of $u$ at $x$ is given by
\begin{eqnarray}
\Delta_h u(x) &:=& \sum_{i=1}^n D_i^- D_i^+ u(x) = \sum_{i=1}^n D_i^+ D_i^-
u(x) \label{defi_lap}\\
&=& \sum_{i=1}^n \frac{u(x+\delta_i)-2 u(x) +u(x-\delta_i)}{h_i^2}. \nonumber
\end{eqnarray}
\end{definition}

\begin{definition} A function $\phi:\overline{\Omega}_h\to\R$ is said to have compact
support if and only if the set $\supp \phi := \{ x \in \overline{\Omega}_h : \phi(x) \neq 0 \}$ is bounded and
$\supp \phi \subset \Omega_{h}$. In particular, this implies
$\left.\phi \right|_{\partial \Omega_{h}} = 0$.
\end{definition}

\begin{lemma}\label{lm:DISC_WEAK_FORM}
Let $u:\overline{\Omega}_h\to \R$ and $f:\Omega_h\to \R$ be two functions. Then
$$
\sum_{i=1}^n \sum_{x\in \overline{\Omega}_h \setminus \partial_i^+\Omega_h}
D_i^+u(x) D_i^+ \phi(x) \h = \sum_{x\in \Omega_h} f(x) \phi(x)\h
\quad \mbox{ for all }\phi:\overline{\Omega}_h\to\R \mbox{ with compact support}
$$
holds if and only if $-\Delta_h u=f$ in $\Omega_h$.
\end{lemma}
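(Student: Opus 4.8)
The plan is to prove the equivalence by \emph{discrete summation by parts}, carried out one coordinate direction at a time, together with the elementary observation that, since $\supp\phi$ is bounded and contained in $\Omega_h$, every sum occurring below has only finitely many nonzero terms; in particular all reindexings are legitimate even when $\Omega$ has unbounded directions.

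\medskip

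First I would fix $i\in\{1,\dots,n\}$ and rewrite the inner $i$-sum. Since $D_i^+\phi(x)\,h_i=\phi(x+\delta_i)-\phi(x)$, splitting gives
$$\sum_{x\in\overline{\Omega}_h\setminus\partial_i^+\Omega_h}D_i^+u(x)\,D_i^+\phi(x)\,h_i=\sum_{x\in\overline{\Omega}_h\setminus\partial_i^+\Omega_h}D_i^+u(x)\,\phi(x+\delta_i)-\sum_{x\in\overline{\Omega}_h\setminus\partial_i^+\Omega_h}D_i^+u(x)\,\phi(x).$$
In the first sum I substitute $y=x+\delta_i$; the shift $x\mapsto x+\delta_i$ is a bijection of $\overline{\Omega}_h\setminus\partial_i^+\Omega_h$ onto $\overline{\Omega}_h\setminus\partial_i^-\Omega_h$, and $D_i^+u(y-\delta_i)=D_i^-u(y)$, so the first sum equals $\sum_{y\in\overline{\Omega}_h\setminus\partial_i^-\Omega_h}D_i^-u(y)\,\phi(y)$. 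Because $\phi$ vanishes on $\partial\Omega_h$ (and $\partial_i^\pm\Omega_h\subset\partial\Omega_h$), both remaining sums may be restricted to $x\in\Omega_h$, where both $D_i^+u$ and $D_i^-u$ are defined. Using the pointwise identity $D_i^-u(x)-D_i^+u(x)=-h_i\,D_i^+D_i^-u(x)$, dividing by $h_i$, and multiplying by $\h$, I obtain
$$\sum_{x\in\overline{\Omega}_h\setminus\partial_i^+\Omega_h}D_i^+u(x)\,D_i^+\phi(x)\,\h=-\sum_{x\in\Omega_h}D_i^+D_i^-u(x)\,\phi(x)\,\h.$$
Summing over $i=1,\dots,n$ and recalling $\Delta_h u=\sum_{i=1}^nD_i^+D_i^-u$ yields
$$\sum_{i=1}^n\sum_{x\in\overline{\Omega}_h\setminus\partial_i^+\Omega_h}D_i^+u(x)\,D_i^+\phi(x)\,\h=-\sum_{x\in\Omega_h}\Delta_h u(x)\,\phi(x)\,\h$$
for every compactly supported $\phi$.

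\medskip

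With this identity in hand the equivalence is immediate: the weak formulation in the statement holds for all compactly supported $\phi$ if and only if $\sum_{x\in\Omega_h}\bigl(-\Delta_h u(x)-f(x)\bigr)\phi(x)\,\h=0$ for all such $\phi$. The implication ``$-\Delta_h u=f$ in $\Omega_h$ $\Rightarrow$ weak formulation'' is then trivial, and for the converse I would test with $\phi=\mathbf{1}_{\{x_0\}}$, the function equal to $1$ at an arbitrary point $x_0\in\Omega_h$ and $0$ at all other points of $\overline{\Omega}_h$; this $\phi$ is admissible since a singleton is bounded and contained in $\Omega_h$, and it forces $-\Delta_h u(x_0)-f(x_0)=0$. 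As $x_0\in\Omega_h$ was arbitrary, $-\Delta_h u=f$ in $\Omega_h$.

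\medskip

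I do not expect a substantial obstacle here: this is a routine discrete integration-by-parts identity. The only point that genuinely requires care is the bookkeeping of the boundary index sets $\partial_i^\pm\Omega_h$ under the shift $x\mapsto x+\delta_i$ — and, in the admissible case where $\Omega$ is unbounded in some direction, the (easy but necessary) remark that all the sums involved are actually finite because $\phi$ has compact support, so that the reindexing and the restriction of the sums to $\Omega_h$ are both justified.
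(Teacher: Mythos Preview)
Your proof is correct and follows essentially the same route as the paper: a discrete summation-by-parts identity in each coordinate direction, followed by testing against indicator functions. The only organizational difference is that the paper first extends $u$ and $\phi$ by zero to all of $\R^n_h$ and then invokes the product rule \eqref{back} together with the telescoping summation rule, whereas you work directly on $\overline{\Omega}_h$, track the shift bijection $\overline{\Omega}_h\setminus\partial_i^+\Omega_h\to\overline{\Omega}_h\setminus\partial_i^-\Omega_h$, and use $\phi|_{\partial\Omega_h}=0$ to reduce both sums to $\Omega_h$; this is the same computation written two ways.
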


The proof requires the following product rule
\begin{eqnarray}
D_i^-(u\cdot v)(x) &=& v(x) D_i^- u(x)+ u(x-\delta_i) D_i^-v(x) \label{back}\\
&=& v(x) D_i^- u(x)+ u(x-\delta_i) (D_i^+v)(x-\delta_i) \nonumber\\
D_i^+(u\cdot v)(x) &=& v(x) D_i^+ u(x)+ u(x+\delta_i) D_i^+v(x) \label{for}\\
&=& v(x) D_i^+ u(x)+ u(x+\delta_i) (D_i^-v)(x+\delta_i)\nonumber
\end{eqnarray}
and for $w:\overline{\Omega}_h \to \R$ with compact support the summation rule
$$
\sum_{x\in \overline{\Omega}_h \setminus \partial_i^+\Omega_h} D_{i}^{+} w(x) h_i=0, \quad
\sum_{x\in \overline{\Omega}_h \setminus \partial_i^-\Omega_h} D_{i}^{-} w(x) h_i = 0
$$
for each fixed $i=1,\ldots,n$.

\medskip

\noindent
{\it Proof of Lemma~\ref{lm:DISC_WEAK_FORM}:}
We may extend $u$ and $\phi$ to all of $\R^n$ by setting $u=0$, $\phi=0$ outside
$\overline{\Omega}_h$. In this way the value of $D_i^+ u(x)$ is well-defined
everywhere and $D_i^+\phi(x)=0$ for all $x\in \partial_i^+ \Omega_h\cup \overline{\Omega}_h^C$
and all $1 \le i \le n$. We compute
\begin{align*}
\sum_{i=1}^n \sum_{x\in \overline{\Omega}_h\setminus\partial_i^+\Omega_h} D_i^+u(x) D_i^+ \phi(x) \h & = \sum_{i=1}^n \sum_{x\in\R_h^n} D_i^+u(x) D_i^+ \phi(x) \h  \\
&= \sum_{i=1}^n \sum_{x\in \R^n_h} (D_i^+
u)(x-\delta_i) (D_i^+\phi)(x-\delta_i) \h\\
&= \sum_{i=1}^n \sum_{x\in \R^n_h} D_i^-\big(D_i^+
u(x)\phi(x)\big)\h -\big(D_i^- D_i^+ u(x)\big) \phi(x) \h \\
&= \sum_{i=1}^n \sum_{x\in \Omega_h}
-\big(D_i^- D_i^+ u(x)\big) \phi(x)\h,
\end{align*}
where the last equality holds since the first sum $\sum_{x\in \R^n_h} D_i^-\big(D_i^+
u(x)\phi(x)\big)\h$ vanishes and the second summand $\big(D_i^- D_i^+ u(x)\big) \phi(x) \h$ has no contribution outside $\Omega_h$ because $\phi$ vanishes there. According to the
definition (\ref{defi_lap}) of the discrete Laplacian we get the claim.
\qed

\medskip

\noindent
{\bf Remark.} If $U:\Omega\to \R$ is a $C^4$-function then locally
$$
\Delta_h U(x) = \Delta U(x) + O(h^2)
$$
for $x\in \Omega_h$.


\section{Discrete function spaces, inequalities and embeddings}

We can consider spaces of functions $u:\overline{\Omega}_h \to \R$
defined on (possibly unbounded) domains $\Omega_h$. Let $1\leq p<\infty$ and define $A(t) =
e^{t^2}-1$ for $t\in \R$. We define the following norms
\begin{align*}
\|u\|_{L^p} &=\Big(\sum_{x\in\overline{\Omega}_h}
|u(x)|^p \h\Big)^{1/p},\quad \|u\|_{L^\infty} = \sup_{x\in
  \overline{\Omega}_h} |u(x)|,\\
\|u\|_D &= \Big(\sum_{i=1}^n \sum_{x\in
  \overline{\Omega}_h\setminus\partial_i^+\Omega_h}|D_i^+ u(x)|^2\h\Big)^\frac{1}{2},\\
\|u\|_{W^{1,2}} &= (\|u\|_{L^{2}}^{2} + \|u\|_{D}^{2})^{\frac{1}{2}} =
\Big(\sum_{i=1}^n
\sum_{x\in\overline{\Omega}_h\setminus\partial_i^+\Omega_h} |D_i^+ u(x)|^2\h +
\sum_{x\in\overline{\Omega}_h} |u(x)|^2 \h \Big)^\frac{1}{2},\\
\|u\|_A &= \inf\Big\{k>0: \sum_{x\in\overline{\Omega}_h}
A\Big(\frac{|u(x)|}{k}\Big)\h \leq 1\Big\}.
\end{align*}

\noindent
{\bf Remark.} For $u\not =0$ the infimum in the definition of $\|u\|_A$ is a minimum (a consequence of Fatou's lemma). Moreover, for $u\not =0$ the statement $\|u\|_A\leq t$ is equivalent to 
$\sum_{x\in\overline{\Omega}_h} A\Big(\frac{|u(x)|}{t}\Big)\h \leq 1$.

\medskip

Corresponding to these norms we define the function spaces
\begin{align*}
L^p(\overline{\Omega}_h) &:= \{u: \overline{\Omega}_h \to \R:
\|u\|_{L^p}<\infty\}, \\
L^\infty(\overline{\Omega}_h) &:= \{u:
\overline{\Omega}_h\to \R: \|u\|_{L^\infty}<\infty\},\\
W^{1,2}(\overline{\Omega}_h) &:= \{u: \overline{\Omega}_h \to \R:
\|u\|_{W^{1,2}}<\infty\}, \\
L^A(\overline{\Omega}_h) &:= \{ u: \overline{\Omega}_h \to \R: \|u\|_A<\infty\}.
\end{align*}
Moreover, we define 
$$ 
W_0^{1,2}(\overline{\Omega}_h) :=
\closure_{W^{1,2}}\{u: \overline{\Omega}_h \to \R:
u \mbox{ has compact support in } \overline{\Omega}_h\}
$$
where the closure is taken with respect to the $W^{1,2}$-norm. Finally, for $n \ge 3$ let
  \begin{equation*}
    D^{1,2}(\R^n_h) := \{u: \R^n_h \to \R:
    \|u\|_{L^{\frac{2n}{n-2}}} < \infty, \|u\|_{D}<\infty\}.
  \end{equation*}

\begin{lemma} The spaces
  $(L^\infty(\overline{\Omega}_h),\|\cdot\|_{L^\infty})$,
  $(L^A(\overline{\Omega}_h), \|\cdot\|_A)$,
  $(L^p(\overline{\Omega}_h),\|\cdot\|_{L^p})$ for $1\leq p < \infty$,
  $(W^{1,2}(\overline{\Omega}_h),\|\cdot\|_{W^{1,2}})$, $(W_0^{1,2}(\overline{\Omega}_h),\|\cdot\|_{W^{1,2}})$ as well as the space $(D^{1,2}(\R^n_h,\|\cdot\|_D)$ are Banach spaces. All five norms satisfy the inequality $\|u_{+}\|, \|u_{-}\|\leq
  \|u\|$. If $\Omega_h= \R^n_h$ then functions with compact support are dense in all these Banach spaces except for $L^\infty(\R^n_h)$. Elements of $W_0^{1,2}(\overline{\Omega}_h)$ vanish on $\partial\Omega_h$ and if $\Omega_h$ is bounded then they also have compact support.
\end{lemma}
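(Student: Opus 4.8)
\noindent\textit{Proof sketch.}
Every space in the statement consists of functions on the countable set $\overline{\Omega}_h$ carrying the constant weight $\h$, so the strategy for completeness is uniform: from a Cauchy sequence $(u_k)$ I would first extract a pointwise limit $u$ — using that $L^2$-, or $L^{2n/(n-2)}$-, convergence on a discrete set forces pointwise convergence — and then upgrade the convergence. For $L^p$ with $1\le p<\infty$ and for $L^\infty$ this is the classical completeness of weighted $\ell^p$ and $\ell^\infty$. For $L^A$ I would use the elementary embedding $L^A\hookrightarrow L^2$ coming from $e^{t^2}-1\ge t^2$ (hence $\|u\|_{L^2}\le\|u\|_A$): a $\|\cdot\|_A$-Cauchy sequence is then $L^2$-Cauchy, has a pointwise limit $u$, and Fatou's lemma (as recorded in the Remark after the definition of the norms) propagates $\sum_x A((u_k-u_l)/\varepsilon)\h\le1$ to the limit $l\to\infty$, giving $u\in L^A$ and $u_k\to u$ in $\|\cdot\|_A$. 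For $W^{1,2}$ — and, mutatis mutandis, for $D^{1,2}$ when $n\ge3$ — the point is that the difference quotients $D_i^\pm$ are continuous with respect to pointwise convergence, so one takes $L^2$-limits of $u_k$ and of each $D_i^+u_k$ and identifies the limit of $D_i^+u_k$ with $D_i^+u$; for $D^{1,2}$ the missing $L^2$-control of $u$ itself is supplied by the discrete Sobolev embedding $D^{1,2}(\R^n_h)\hookrightarrow L^{2n/(n-2)}(\R^n_h)$ (Section~3), which also shows $\|\cdot\|_D$ separates points on $D^{1,2}$, a $\|\cdot\|_D$-null function being constant and hence $0$. Finally $W_0^{1,2}$ is complete as a closed subspace of the Banach space $W^{1,2}$.

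The inequality $\|u_+\|,\|u_-\|\le\|u\|$ I would read off pointwise. For $\|\cdot\|_{L^p}$ and $\|\cdot\|_{L^\infty}$ it is immediate from $|u_\pm(x)|\le|u(x)|$; for $\|\cdot\|_A$ it follows from the same bound together with the monotonicity of $A$ on $[0,\infty)$, which makes the defining set in the infimum for $u_\pm$ contain that for $u$. For $\|\cdot\|_D$ and $\|\cdot\|_{W^{1,2}}$ the key observation is that $t\mapsto t_+$ and $t\mapsto t_-$ are $1$-Lipschitz on $\R$, whence $|D_i^+u_\pm(x)|\le|D_i^+u(x)|$ for all $x,i$ — the discrete analogue of $|\nabla u_\pm|\le|\nabla u|$ — and combining with the pointwise bound on $|u_\pm|$ gives the claim. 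For the last assertion, if $u\in W_0^{1,2}(\overline{\Omega}_h)$ is the $\|\cdot\|_{W^{1,2}}$-limit of compactly supported $\phi_k$, then $\phi_k\to u$ in $L^2$, hence pointwise on the discrete set, so $u$ inherits $\phi_k|_{\partial\Omega_h}=0$; and if $\Omega_h$ is bounded then $\overline{\Omega}_h$ is finite, so the support of $u$ is automatically bounded and $u|_{\partial\Omega_h}=0$ forces $\supp u\subset\Omega_h$, i.e. $u$ has compact support.

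For density of compactly supported functions when $\Omega_h=\R^n_h$, I would truncate with cut-offs. In $L^p$ and $L^A$ the sharp cut-off $u_R:=u\,\mathbf1_{\{|x|_\infty\le R\}}$ works and $\|u-u_R\|\to0$ reduces to a vanishing tail: a convergent-series tail in the $L^p$ case; in the $L^A$ case I would first note that $u\in L^A(\R^n_h)$ forces $u(x)\to0$ as $|x|\to\infty$ and $u\in L^2$, then use $A(t)\le 2t^2$ for small $t$ to conclude $\sum_x A(u(x)/\varepsilon)\h<\infty$ for \emph{every} $\varepsilon>0$, so that $\sum_{|x|_\infty>R}A(u(x)/\varepsilon)\h\to0$ and $\|u-u_R\|_A\le\varepsilon$ for $R$ large. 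For $W^{1,2}$ and $D^{1,2}$ I would use a Lipschitz cut-off $\eta_R$ equal to $1$ on $[-R,R]^n$, vanishing off $[-2R,2R]^n$, with $|D_i^+\eta_R|\le C/R$; writing $u_R=\eta_R u$ and expanding $D_i^+u_R$ by the product rule \eqref{for}, the error $D_i^+u-D_i^+u_R$ splits into $(1-\eta_R)D_i^+u$, a vanishing $L^2$-tail, and $u(\cdot+\delta_i)D_i^+\eta_R$, supported in the annulus $\{R<|x|_\infty<2R\}$. In $W^{1,2}$ the latter is bounded in $L^2$ by $\tfrac CR\|u\|_{L^2(\mathrm{ann})}\to0$; in $D^{1,2}$, where $u$ need not lie in $L^2$, I would bound it instead by Hölder on the annulus, $\|u\|_{L^2(\mathrm{ann})}\le(CR^n)^{1/n}\|u\|_{L^{2n/(n-2)}(\mathrm{ann})}$, which absorbs the factor $R^{-1}$ and still tends to $0$. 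That compactly supported functions are \emph{not} dense in $L^\infty(\R^n_h)$ is witnessed by the constant $1$, for which $\|1-\phi\|_{L^\infty}\ge1$ whenever $\phi$ has compact support.

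The step I expect to be the main obstacle is the density claim in $L^A$ and in $D^{1,2}$: for $L^A$ because $A(t)=e^{t^2}-1$ violates the $\Delta_2$-condition (so the analogous density \emph{fails} in the continuous Orlicz space, and one genuinely must exploit that lattice $L^A$-functions decay at infinity and lie in every $L^{2m}$); for $D^{1,2}$ because the cut-off error cannot be controlled by $\|u\|_{L^2}$ and must instead be handled by the Hölder-on-the-annulus estimate together with the Sobolev embedding — the discrete counterpart of the standard proof that compactly supported functions are dense in $\dot H^1(\R^n)$ for $n\ge3$.
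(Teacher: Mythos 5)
Your argument for the one step the paper treats as nontrivial — completeness of $D^{1,2}(\R^n_h)$ — is exactly the paper's: pass to $L^{2n/(n-2)}$-convergence via the discrete Sobolev embedding, then use pointwise convergence on the lattice to identify the $L^2$-limit of $D_i^+u_k$ with $D_i^+u$. The remainder of your write-up (the $1$-Lipschitz observation giving $|D_i^+u_\pm|\le|D_i^+u|$, the lattice-specific density argument in $L^A$ via decay plus $A(t)\le 2t^2$, and the H\"older-on-the-annulus estimate for density in $D^{1,2}$) is correct and fills in details the paper simply labels trivial; it does not diverge from the paper's route where the two overlap.
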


\begin{proof} The only nontrivial statement is the completeness of $D^{1,2}(\R^n_h)$. Let $(u_k)_{k\in \N}$ be a Cauchy sequence in $D^{1,2}(\R^n_h)$. By the Sobolev inequality of Theorem~\ref{sobolev_embedd} the sequence is also a Cauchy sequence in $L^\frac{2n}{n-2}(\R^n_h)$ and hence convergent, i.e. $u_k\to u\in L^\frac{2n}{n-2}(\R^n_h)$ as $k\to \infty$. Moreover, from the definition of $\|\cdot\|_D$ it follows that for each $i=1,\ldots,n$ the sequence $(D_i^+ u_k)_{k\in\N}$ is a Cauchy sequence in $L^2(\R^n_h)$ and hence convergent, i.e., there exist functions $f_i\in L^2(\R^n_h)$ such that $D_i^+ u_k \to f_i$ as $k\to \infty$ in the sense of $L^2$-convergence. The proof is complete if we can show that $D_i^+ u = f_i$. Since for a subsequence $(u_{k_l})_{l\in \N}$ we have $u_{k_l}(x)\to u(x)$ as $l\to \infty$ for almost all $x\in \R^n_h$ the definition of the forward finite-difference quotient implies that $D_i^+ u_{k_l}(x) \to D_i^+ u(x)$ as $l\to \infty$ for almost all $x\in \R^n_h$ and for $i=1,\ldots, n$. This implies $D_i^+ u = f_i$ and finishes the proof.  
\end{proof}

Next we give statements of four different discrete inequalities:
Poincar\'{e}'s inequality in Theorem~\ref{poincare},
a Sobolev inequality in dimension $\geq 3$ in Theorem~\ref{sobolev_embedd},
a Sobolev inequality in dimension 2 in Theorem~\ref{sobolev_embedd_2d},
and Hardy's inequality in Theorem~\ref{th:HARDY_INEQUALITY}.
All four inequalities have continuous counterparts. For completeness the proofs,
which are suitable variants of the proofs of the continuous counterparts,
are given in the Appendix. Additionally, by combining the Sobolev inequality with the Hardy inequality we obtain the so-called Hardy-Sobolev inequality of Theorem~\ref{th:HARDY_SOBOLEV}, where the proof is given directly after the statement. We begin with the following Lemma.

\begin{lemma}[Properties of the first eigenfunction]
Let $\Omega=(a_1,b_1)\times\ldots\times (a_n,b_n)\subset\R^n$ be a bounded $n$-dimensional box.
\begin{itemize}
\item[(i)] The first eigenvalue of the discrete Laplace-operator $(-\Delta_h)$
with vanishing Dirichlet boundary conditions on $\partial\Omega_h$
is simple and given by
$$
\lambda_{1,h} = \sum_{i=1}^n \frac{4}{h_i^2}\sin^2\left(\frac{\pi h_i}{2(b_i-a_i)}\right)
              < \sum_{i=1}^n \frac{\pi^2}{(b_i-a_i)^2} =: \lambda_1
$$
with the corresponding eigenfunction
$$
\phi_{1,h}(x) = \prod_{i=1}^n \sin\frac{\pi(x_i-a_i)}{b_i-a_i} > 0,
\quad x \in \Omega_h.
$$
\item[(ii)] Let $t>0$ be a fixed value such that
$t \|u\|_{L^{1}} = \sum_{x\in \Omega_h} t\phi_{1,h}(x)\h =1$. Then
$$
t \phi_{1,h}(x) \geq \frac{2^n}{|\Omega|^2} \dist(x,\partial\Omega_h)^n \mbox{ for all } x \in \Omega_h.
$$
\end{itemize}
\label{normalization}
\end{lemma}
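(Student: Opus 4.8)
The plan is to use the product structure of the box. Since $\Delta_h=\sum_{i=1}^n D_i^+D_i^-$ and the $i$-th summand acts only on the $i$-th coordinate, the eigenvalue problem for $-\Delta_h$ on $\overline{\Omega}_h$ with vanishing data on $\partial\Omega_h$ separates into $n$ one-dimensional problems, and the explicit formulas for $\lambda_{1,h}$ and $\phi_{1,h}$ will drop out of the corresponding one-dimensional formulas.

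\emph{Step 1 (one-dimensional building blocks).} Fix a direction $i$ and set $N_i:=(b_i-a_i)/h_i=l_i-k_i\ge 2$. On the $N_i-1$ interior grid points of $(a_i,b_i)$, among functions vanishing at $a_i$ and $b_i$, a direct computation using $\sin(\alpha+\beta)+\sin(\alpha-\beta)=2\sin\alpha\cos\beta$ shows that $\psi_m(x_i):=\sin\frac{m\pi(x_i-a_i)}{b_i-a_i}$ is a (nonzero) eigenfunction of $-D_i^+D_i^-$ with eigenvalue $\mu_{m,i}:=\frac{4}{h_i^2}\sin^2\frac{m\pi h_i}{2(b_i-a_i)}>0$ for $m=1,\dots,N_i-1$. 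As $m$ runs from $1$ to $N_i-1$ the argument $\frac{m\pi h_i}{2(b_i-a_i)}=\frac{m\pi}{2N_i}$ runs through $(0,\pi/2)$, on which $\sin^2$ is strictly increasing; hence $\mu_{1,i}<\dots<\mu_{N_i-1,i}$ are distinct. Since $-D_i^+D_i^-$ is symmetric (with respect to $\langle u,v\rangle=\sum u v\,h_i$) and acts on an $(N_i-1)$-dimensional space, these $\psi_m$ form a complete orthogonal system of eigenfunctions.

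\emph{Step 2 (tensorisation; part (i)).} Functions on $\overline{\Omega}_h$ vanishing on $\partial\Omega_h$ are identified with the tensor product of the $n$ interior-grid spaces, and under this identification $-\Delta_h=\sum_{i=1}^n(\Id\otimes\cdots\otimes A_i\otimes\cdots\otimes\Id)$, where $A_i=-D_i^+D_i^-$. Therefore the products $\prod_{i=1}^n\psi_{m_i}(x_i)$, $1\le m_i\le N_i-1$, form a complete orthogonal system of eigenfunctions of $-\Delta_h$, the eigenvalue attached to $(m_1,\dots,m_n)$ being $\sum_{i=1}^n\mu_{m_i,i}$. Because each $\mu_{m,i}$ is positive and strictly increasing in $m$, this sum is minimal precisely for $(m_1,\dots,m_n)=(1,\dots,1)$ and for no other multi-index; this yields the value $\lambda_{1,h}=\sum_{i=1}^n\mu_{1,i}$, shows it is simple (its eigenspace is the one-dimensional span of $\phi_{1,h}$), and identifies the eigenfunction as $\phi_{1,h}(x)=\prod_{i=1}^n\sin\frac{\pi(x_i-a_i)}{b_i-a_i}>0$ on $\Omega_h$. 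Finally $\frac{\pi h_i}{2(b_i-a_i)}=\frac{\pi}{2N_i}\in(0,\pi/2)$ and $\sin t<t$ for $t>0$ give $\mu_{1,i}<\frac{\pi^2}{(b_i-a_i)^2}$, and summing over $i$ gives $\lambda_{1,h}<\lambda_1$. (Alternatively, simplicity and positivity of $\phi_{1,h}$ follow from Perron--Frobenius applied to $cI+\Delta_h$ for large $c$, but the explicit diagonalisation is needed anyway for the formulas.)

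\emph{Step 3 (the weighted lower bound; part (ii)).} Two elementary facts suffice. First, since $\sigma\mapsto\sin(\pi\sigma)$ is concave on $[0,1]$ and lies above the chord through $(0,0)$, $(\tfrac12,1)$, $(1,0)$, one has $\sin(\pi\sigma)\ge 2\min\{\sigma,1-\sigma\}$ for $\sigma\in[0,1]$; applying this with $\sigma=\frac{x_i-a_i}{b_i-a_i}$ and multiplying over $i$,
\[
\phi_{1,h}(x)\ \ge\ \prod_{i=1}^n\frac{2\min\{x_i-a_i,\,b_i-x_i\}}{b_i-a_i}\ \ge\ \frac{2^n}{|\Omega|}\Big(\min_{1\le i\le n}\min\{x_i-a_i,\,b_i-x_i\}\Big)^{\!n}\ =\ \frac{2^n}{|\Omega|}\,\dist(x,\partial\Omega_h)^n ,
\]
where the last equality uses that for an interior grid point $x$ the nearest boundary grid point is obtained by moving a single coordinate to the nearer face (the faces of $\Omega$ pass through grid points), so $\dist(x,\partial\Omega_h)=\min_i\min\{x_i-a_i,b_i-x_i\}$. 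Second, $0<\phi_{1,h}\le 1$ on $\Omega_h$ and $\sum_{x\in\Omega_h}\h=\prod_{i=1}^n(b_i-a_i-h_i)<|\Omega|$, hence $\|\phi_{1,h}\|_{L^1}<|\Omega|$ and the normalising constant satisfies $t=\|\phi_{1,h}\|_{L^1}^{-1}>|\Omega|^{-1}$. Multiplying the two estimates gives $t\,\phi_{1,h}(x)>\frac{2^n}{|\Omega|^2}\dist(x,\partial\Omega_h)^n$, which is the claim.

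\emph{Main obstacle.} There is no genuine analytic difficulty here. The only points needing care are the bookkeeping in Step 2 --- one must check that the product eigenfunctions really exhaust the spectrum, so that minimality of $\lambda_{1,h}$ among all multi-index sums forces the eigenspace to be one-dimensional --- and, in Step 3, the identification of $\dist(x,\partial\Omega_h)$ with $\min_i\min\{x_i-a_i,b_i-x_i\}$, which relies on the faces of the box being aligned with the grid.
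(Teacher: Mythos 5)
Your proof is correct and follows essentially the same route as the paper: one-dimensional eigenfunctions, tensorization for part (i), and for part (ii) the concavity bound $\sin(\pi\sigma)\ge 2\min\{\sigma,1-\sigma\}$ together with an estimate of $\|\phi_{1,h}\|_{L^1}$. The only cosmetic difference is in bounding $\|\phi_{1,h}\|_{L^1}$: you use the pointwise bound $\phi_{1,h}\le 1$ and count the interior grid points directly, whereas the paper applies Cauchy--Schwarz first; both yield $t\ge 1/|\Omega|$.
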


\begin{proof} (i): By direct computation one verifies that
  $u_{s}(z)= \sin \tfrac{\pi s(z-a)}{b-a}$,  $1 \le s \le l_{1} - k_{1} -1$,
  form a complete set of discrete eigenfunctions of $-\Delta_{h}$ on the discrete
  $1$-dimensional box  $(a,b) \cap \R_{h}$ with $a=k_{1}h$, $b=l_{1}h$, $l_{1},k_{1} \in \Z$,
  $l_{1} - k_{1} > 1 \in \Z$ with eigenvalues
    \begin{equation*}
      \lambda_{s} = \frac{4}{h^{2}} \sin^{2} \frac{s \pi h}{2 (b-a)}.
    \end{equation*}
  On a bounded $n$-dimensional box the operator $-\Delta_{h}$ with vanishing Dirichlet boundary conditions is the sum of the corresponding
  one-dimensional operators. Hence we can construct all eigenfunctions for
  the $n$-dimensional operator by taking tensor products of the eigenfunctions
  of the corresponding one-dimensional operators. The eigenvalues are then given
  as the sum of corresponding eigenvalues. In particular, this implies that
  $\lambda_{1,h}$ is simple. The estimate $\lambda_{1,h}<\lambda_1$ follows from $\sin x <x $ for $x>0$.

\medskip

(ii): We begin with a simple observation: if $f:\Omega_h\to \R$ is a function of the form
$f(x)=\prod_{i=1}^n f_i(x_i)$ then
$$
\sum_{x\in \Omega_h} f(x)\h = \prod_{i=1}^n \left(\sum_{k=1}^{N_i-1} f_i(a_i+k h_i) h_i\right),
$$
where $N_i := l_{i} - k_{i}$. Based on this we can estimate the discrete $L^1$-norm of $\phi_{1,h}$:
\begin{align*}
  \sum_{x\in\Omega_h} \phi_{1,h}(x)\h
  & \leq \left(\sum_{x\in \Omega_h} \phi_{1,h}(x)^2\h\right)^\frac{1}{2} |\Omega_h|^\frac{1}{2} \\
  & = \prod_{i=1}^n \left( h_{i} \sum_{k=1}^{N_i-1}
  \underbrace{\sin^2 \frac{\pi k h_i}{b_i-a_i}}_{\leq 1}\right)^\frac{1}{2} |\Omega|^\frac{1}{2} \\
& \leq \prod_{i=1}^n (h_i N_i)^\frac{1}{2} |\Omega|^\frac{1}{2}= |\Omega|
\end{align*}
and hence $t \geq 1/|\Omega|$. Next we use the concavity estimate $\sin x\geq \frac{2x}{\pi}$
for $x\in [0,\frac{\pi}{2}]$ and derive from it
$$
\sin \frac{\pi(x_i-a_i)}{b_i-a_i} \geq \frac{2(x_i-a_i)}{b_i-a_i} \quad \mbox{ provided } a_i \leq x_i \leq \frac{a_i+b_i}{2}.
$$
Since $\dist(x, \partial \Omega_{h}) = \min_{1 \le i \le n} \{ x_{i} - a_{i}, b_{i} - x_{i}\}$
in $\Omega_{h}$, we obtain
$$
 \phi_{1,h}(x) \geq \frac{2^n \dist(x,\partial\Omega_h)^n}{|\Omega|} \quad \mbox{ for all } x \in\Omega_h
$$
which finishes the proof of the claim.
\end{proof}

\medskip

\noindent
{\bf Remark.} Note the trivial statement: $u\in L^p(\overline{\Omega}_h) \Rightarrow u\in L^\infty(\overline{\Omega}_h)$
with the embedding estimate $\|u\|_{L^\infty} \leq \h^{-1/p}\|u\|_{L^p}$,
which is not stable with respect to the mesh size $h$.
In contrast, the following embeddings will be stable with respect to $h$.

\begin{theorem}[Poincar\'{e}'s inequality]
Let $\Omega=(a_1,b_1)\times\ldots\times (a_n,b_n)\subset\R^n$ be a bounded $n$-dimensional box.
Then there exists a constant $C_P(\Omega)$ which is independent of $h$ such that
\begin{equation} \label{eq:POINCARE_INEQ}
\|u\|_{L^2} \leq C_P(\Omega)\|u\|_D\; \mbox{  for all }
u \in W_0^{1,2}(\overline{\Omega}_h).
\end{equation}
Moreover, we have
  \begin{equation*}
    \frac{1}{\lambda_{1,h}} \le  C^{2}_P(\Omega)
                                   \le \frac{1}{4n^{2}} \sum_{i=1}^n (b_i-a_i)^2,
  \end{equation*}
where the lower bound is optimal.
\label{poincare}
\end{theorem}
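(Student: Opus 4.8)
The plan is to reduce the $n$-dimensional estimate to a one-dimensional discrete Friedrichs inequality applied along grid lines, and then to combine the $n$ directional estimates by a convex combination chosen so as to eliminate every dependence on the mesh; the sharpness of the lower bound will then follow from the variational characterization of $\lambda_{1,h}$ together with the discrete integration-by-parts formula of Lemma~\ref{lm:DISC_WEAK_FORM}.

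First I would establish the one-dimensional estimate. Fix a direction $i$ and a function $v$ on the line grid $\{a_i+jh_i:j=0,\dots,N_i\}$, $N_i=(b_i-a_i)/h_i$, with $v(a_i)=v(b_i)=0$. Telescoping from the left gives $v(a_i+jh_i)=h_i\sum_{k=0}^{j-1}D_i^+v$, and from the right $v(a_i+jh_i)=-h_i\sum_{k=j}^{N_i-1}D_i^+v$; applying Cauchy--Schwarz to whichever of the two sums is shorter yields
\[
 v(a_i+jh_i)^2\le \min(j,N_i-j)\,h_i\sum_{k=0}^{N_i-1}|D_i^+v(a_i+kh_i)|^2\,h_i .
\]
Summing in $j$ and using $\sum_{j=1}^{N_i-1}\min(j,N_i-j)\le N_i^2/4$ produces the one-dimensional inequality with constant $(b_i-a_i)^2/4$. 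Now let $u\in W_0^{1,2}(\overline{\Omega}_h)$; since $\Omega$ is bounded these are exactly the functions vanishing on $\partial\Omega_h$, so no density argument is needed. Decomposing the sums defining $\|u\|_{L^2}^2$ and $\|D_i^+u\|_{L^2}^2$ into grid lines parallel to $e_i$ (each restriction of $u$ vanishes at its two endpoints in $\partial_i^{\pm}\Omega_h$), multiplying by $\prod_{j\ne i}h_j$ and summing over the lines gives
\[
 \|u\|_{L^2}^2\le \frac{(b_i-a_i)^2}{4}\,\|D_i^+u\|_{L^2}^2,\qquad i=1,\dots,n .
\]

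Next I would combine these $n$ inequalities. Writing $\|u\|_{L^2}^2=\sum_{i=1}^n\alpha_i\|u\|_{L^2}^2$ with $\alpha_i=\big(\sum_j(b_j-a_j)^{-2}\big)^{-1}(b_i-a_i)^{-2}$, so that $\sum_i\alpha_i=1$ and $\alpha_i(b_i-a_i)^2/4$ is independent of $i$, and inserting the directional estimates gives
\[
 \|u\|_{L^2}^2\le \Big(4\textstyle\sum_{j=1}^n(b_j-a_j)^{-2}\Big)^{-1}\|u\|_D^2 .
\]
This exhibits an admissible, manifestly $h$-independent constant $C_P^2(\Omega)=\big(4\sum_j(b_j-a_j)^{-2}\big)^{-1}$, and the Cauchy--Schwarz inequality $n^2\le\big(\sum_j(b_j-a_j)^2\big)\big(\sum_j(b_j-a_j)^{-2}\big)$ upgrades this to the stated bound $C_P^2(\Omega)\le\frac{1}{4n^2}\sum_j(b_j-a_j)^2$. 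For the lower bound and its optimality, observe that on the finite-dimensional space $W_0^{1,2}(\overline{\Omega}_h)$ with scalar product $\langle u,v\rangle=\sum_{x\in\overline{\Omega}_h}u(x)v(x)\h$, testing Lemma~\ref{lm:DISC_WEAK_FORM} with $\phi=u$ and $f=-\Delta_h u$ gives $\langle-\Delta_h u,u\rangle=\|u\|_D^2$, hence $\lambda_{1,h}=\min_{u\ne 0}\|u\|_D^2/\|u\|_{L^2}^2$, the minimum being attained at $\phi_{1,h}$ (Lemma~\ref{normalization}). Thus $\|u\|_{L^2}^2\le\lambda_{1,h}^{-1}\|u\|_D^2$ holds for all $u$, and inserting $u=\phi_{1,h}$ into any valid Poincar\'e inequality forces $C_P^2(\Omega)\ge\lambda_{1,h}^{-1}$; so $\lambda_{1,h}^{-1}$ is the best constant for the mesh $h$, which is the claimed optimality.

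The argument is mostly bookkeeping, but two points deserve care: obtaining the factor $\tfrac14$ rather than $1$ in the one-dimensional estimate requires using \emph{both} endpoint conditions simultaneously, via $\sum_{j}\min(j,N_i-j)\le N_i^2/4$; and the reciprocal-square weights $\alpha_i$ are precisely what cancels the mesh dependence in the combined estimate. Beyond keeping track of the product weight $\h$ and the index sets $\overline{\Omega}_h\setminus\partial_i^+\Omega_h$ occurring in $\|\cdot\|_D$, I do not expect a substantive obstacle.
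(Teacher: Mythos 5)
Your proof is correct, but it reaches the upper bound for $C_P^2(\Omega)$ by a genuinely different route than the paper. The paper relies entirely on the spectral picture: it invokes the explicit tensor-product eigenfunction $\phi_{1,h}$ and the closed-form expression for $\lambda_{1,h}$ from Lemma~\ref{normalization}, then applies $\sin x\ge\frac{2}{\pi}x$ to each factor to get the $h$-independent lower bound $\lambda_{1,h}\ge\sum_i 4/(b_i-a_i)^2$, followed by the harmonic--arithmetic mean inequality. You instead re-derive a one-dimensional discrete Friedrichs inequality from first principles by telescoping from both endpoints, taking $\min(j,N_i-j)$ and bounding $\sum_j\min(j,N_i-j)\le N_i^2/4$, and then combine the $n$ directional estimates by a convex combination with weights $\alpha_i\propto(b_i-a_i)^{-2}$ chosen so that $\alpha_i(b_i-a_i)^2$ is constant. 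The two methods land on the identical intermediate constant $\bigl(4\sum_i(b_i-a_i)^{-2}\bigr)^{-1}$ and then use equivalent Cauchy--Schwarz/AM--HM steps, so neither gives a sharper result; your version is more self-contained (it never needs the explicit eigenfunction or the $\sin$ inequality, only the two boundary conditions and Cauchy--Schwarz), while the paper's is shorter because it piggybacks on Lemma~\ref{normalization}. Your treatment of the lower bound $1/\lambda_{1,h}\le C_P^2(\Omega)$ and its optimality -- identify $\|u\|_D^2=\langle-\Delta_h u,u\rangle$ via Lemma~\ref{lm:DISC_WEAK_FORM}, read off the Rayleigh quotient, and test with $\phi_{1,h}$ -- coincides with the paper's.
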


\begin{theorem}[Sobolev embedding for $n\geq 3$] Let $n\geq 3$. With
  $C_S(n):=\frac{4(n-1)}{\sqrt{n}(n-2)} \le 5$ the following inequality holds
$$
\|u\|_{L^\frac{2n}{n-2}} \leq C_S(n) \|u\|_D \mbox{ for all } u \in
D^{1,2}(\R^n_h),
$$
and for every mesh size vector $h$.
\label{sobolev_embedd}
\end{theorem}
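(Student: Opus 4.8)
The plan is to transcribe the classical Gagliardo--Nirenberg--Sobolev argument onto the lattice in two stages: first the endpoint estimate $\|u\|_{L^{n/(n-1)}}\le\frac1n\sum_{i=1}^n\|D_i^+u\|_{L^1}$, and then a bootstrap to the exponent $2^\ast:=\frac{2n}{n-2}$ by inserting a suitable power of $|u|$ into that endpoint estimate. Since compactly supported functions are dense in $D^{1,2}(\R^n_h)$ (as stated in the Lemma above) and the inequality, once known for compactly supported functions, extends by the same limiting argument used to prove completeness of $D^{1,2}$, it suffices to prove the claim for $u$ with compact support; for such $u$ all sums occurring below are finite.

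\emph{Step 1 (discrete fundamental theorem of calculus and the $L^{n/(n-1)}$ estimate).} Fix a direction $i$ and a grid point $x$, and let $L_i(x):=\{y\in\R^n_h: y_j=x_j \text{ for } j\neq i\}$ be the grid line through $x$ in direction $e_i$. Telescoping $D_i^+u$ along $L_i(x)$ and letting the index tend to $-\infty$ (legitimate since $u$ has compact support) gives $u(x)=h_i\sum_{y\in L_i(x),\,y_i<x_i}D_i^+u(y)$, whence $|u(x)|\le g_i(x):=h_i\sum_{y\in L_i(x)}|D_i^+u(y)|$; the crucial point is that $g_i$ does not depend on the coordinate $x_i$. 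Multiplying the $n$ inequalities and taking $\frac{1}{n-1}$-th powers yields $|u(x)|^{n/(n-1)}\le\prod_{i=1}^n g_i(x)^{1/(n-1)}$. One then multiplies by $\h$ and sums over $\R^n_h$ one variable at a time: at each stage the unique factor that is constant in the variable currently being summed is pulled out, and the discrete Hölder inequality with $n-1$ factors of exponent $n-1$ is applied to the remaining ones, exactly as in the continuous proof. After all $n$ summations the right-hand side collapses to $\prod_{i=1}^n\bigl(\h\sum_{y\in\R^n_h}|D_i^+u(y)|\bigr)^{1/(n-1)}=\prod_{i=1}^n\|D_i^+u\|_{L^1}^{1/(n-1)}$; raising to the power $\frac{n-1}{n}$ and applying the arithmetic--geometric mean inequality gives $\|u\|_{L^{n/(n-1)}}\le\frac1n\sum_{i=1}^n\|D_i^+u\|_{L^1}$.

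\emph{Step 2 (bootstrap).} Set $\gamma:=\frac{2(n-1)}{n-2}\ge1$; this choice makes $\gamma\cdot\frac{n}{n-1}=2^\ast$, $2(\gamma-1)=2^\ast$ and $\gamma-\frac{2^\ast}{2}=1$. Apply Step 1 to the compactly supported function $|u|^\gamma$. To control $\|D_i^+(|u|^\gamma)\|_{L^1}$ use the elementary inequality $\bigl||a|^\gamma-|b|^\gamma\bigr|\le\gamma(|a|^{\gamma-1}+|b|^{\gamma-1})|a-b|$ (valid for $\gamma\ge1$), which gives the discrete chain-rule bound $|D_i^+(|u|^\gamma)(x)|\le\gamma(|u(x)|^{\gamma-1}+|u(x+\delta_i)|^{\gamma-1})|D_i^+u(x)|$; then the Cauchy--Schwarz inequality together with the translation invariance of $\sum_{x\in\R^n_h}$ (so that $\sum_x|u(x+\delta_i)|^{2(\gamma-1)}=\sum_x|u(x)|^{2^\ast}$) yields
\[
\|D_i^+(|u|^\gamma)\|_{L^1}\le 2\gamma\,\|u\|_{L^{2^\ast}}^{2^\ast/2}\,\|D_i^+u\|_{L^2}.
\]
Feeding this into Step 1, using $\||u|^\gamma\|_{L^{n/(n-1)}}=\|u\|_{L^{2^\ast}}^{\gamma}$ and $\sum_{i=1}^n\|D_i^+u\|_{L^2}\le\sqrt n\,\|u\|_D$, and finally dividing by $\|u\|_{L^{2^\ast}}^{2^\ast/2}$ (the case $u\equiv0$ being trivial) leaves $\|u\|_{L^{2^\ast}}^{\gamma-2^\ast/2}=\|u\|_{L^{2^\ast}}\le\frac{2\gamma}{\sqrt n}\|u\|_D=\frac{4(n-1)}{\sqrt n(n-2)}\|u\|_D=C_S(n)\|u\|_D$. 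Since $C_S(n)$ is decreasing in $n\ge3$ with $C_S(3)=8/\sqrt3<5$, also $C_S(n)\le5$.

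The main obstacle is Step 1: the one-variable-at-a-time summation requires keeping careful track of which of the $n-1$ surviving factors still depends on which coordinate, so that the discrete Hölder inequality applies at every stage, precisely as in the continuous Gagliardo argument. Step 2 is then a routine adaptation in which the discrete difference estimate replaces the chain rule and the translation invariance of the lattice sum absorbs the unwanted $u(x+\delta_i)$ terms; the only mild care needed is that general elements of $D^{1,2}(\R^n_h)$ are reached by density together with a limiting argument (a subsequence converging pointwise and Fatou's lemma).
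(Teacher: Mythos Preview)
Your proof is correct and follows essentially the same route as the paper: the discrete Gagliardo endpoint estimate $\|u\|_{L^{n/(n-1)}}\le\frac1n\sum_i\|D_i^+u\|_{L^1}$ via one-variable-at-a-time summation and H\"older, followed by the bootstrap obtained by applying it to $|u|^\gamma$ with $\gamma=\frac{2(n-1)}{n-2}$, the discrete chain-rule bound $\bigl||a|^\gamma-|b|^\gamma\bigr|\le\gamma(|a|^{\gamma-1}+|b|^{\gamma-1})|a-b|$, Cauchy--Schwarz, and translation invariance of the lattice sum. The only cosmetic difference is that the paper writes the substitution as $u=|v|^{\gamma-1}v$ rather than $|u|^\gamma$, and is terser about the reduction to compact support.
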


\begin{theorem}[Sobolev embedding for $n=2$] With $C_S(2) := 8\sqrt{2\pi(e+256)}$ the
  following inequality holds
\begin{equation}
\|u\|_A \leq C_S(2) \|u\|_{W^{1,2}} \mbox{ for all } u \in W^{1,2}(\R^2_h),
\label{sobolev_2d}
\end{equation}
and for every the mesh size vector $h$. This is equivalent to
\begin{equation}
\sum_{x\in \R^2_h} A\Big(\frac{|u(x)|}{C_S(2)\|u\|_{W^{1,2}}}\Big)\h \leq 1
\mbox{ for all } u \in W^{1,2}(\R^2_h)\setminus\{0\},
\label{sobolev_2d_explicit}
\end{equation}
and in particular it implies
\begin{equation}
\|u\|_{L^p}\leq 2C_S(2)p \|u\|_{W^{1,2}} \mbox{ for all } u \in
W^{1,2}(\R^2_h) \mbox{ and all } p\geq 2.
\label{power-sobolev_2d}
\end{equation}
\label{sobolev_embedd_2d}
\end{theorem}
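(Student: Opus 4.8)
The plan is to prove the integral form \eqref{sobolev_2d_explicit}; the equivalence with \eqref{sobolev_2d} (taking $t=C_S(2)\|u\|_{W^{1,2}}$) is precisely the characterization of $\|\cdot\|_A$ recorded in the Remark after the definition of the norms, and once \eqref{sobolev_2d_explicit} is available the power-type bound \eqref{power-sobolev_2d} is purely formal: expanding $A(s)=e^{s^2}-1=\sum_{k\ge1}s^{2k}/k!$ termwise, \eqref{sobolev_2d_explicit} yields $\|u\|_{L^{2k}}\le (k!)^{1/2k}C_S(2)\|u\|_{W^{1,2}}\le\sqrt{k}\,C_S(2)\|u\|_{W^{1,2}}$ for every $k\in\N$, and interpolating between consecutive even exponents together with $\lceil p/2\rceil\le p$ for $p\ge2$ gives $\|u\|_{L^p}\le\sqrt{p}\,C_S(2)\|u\|_{W^{1,2}}\le 2p\,C_S(2)\|u\|_{W^{1,2}}$. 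By the lemma on density of compactly supported functions in $W^{1,2}(\R^2_h)$, together with Fatou's lemma (lower semicontinuity of $k\mapsto\sum_x A(|u(x)|/k)\h$ along an $L^2$-convergent subsequence), it suffices to prove \eqref{sobolev_2d_explicit} for $u$ with compact support.

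The heart of the matter is the family of $L^{2k}$-estimates $\|u\|_{L^{2k}}\le c\sqrt{k}\,\|u\|_{W^{1,2}}$ for all $k\in\N$, with an explicit absolute constant $c$. I emphasize that the growth rate $\sqrt{k}$ is essential and not reachable by the naive route: applying the two-dimensional embedding $\|w\|_{L^2}^2\le\|D_1^+w\|_{L^1}\|D_2^+w\|_{L^1}$ to $w=|u|^q$, estimating $D_i^+(|u|^q)$ by a discrete chain rule and Cauchy--Schwarz, and climbing the ladder $q=1,2,3,\dots$, one only obtains $\|u\|_{L^{2k}}^{2k}\le (k!)^2\|u\|_{W^{1,2}}^{2k}$, i.e.\ \emph{linear} growth $\|u\|_{L^{2k}}\sim (k/e)\|u\|_{W^{1,2}}$, under which the series in \eqref{sobolev_2d_explicit} diverges. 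To gain the extra square root I would represent $u$ through the fundamental solution $G_h$ of $-\Delta_h$ on $\R^2_h$: writing $-\Delta_h u=f$ and summing by parts as in Lemma~\ref{lm:DISC_WEAK_FORM} gives $|u(x)|\le\sum_{i=1}^2\sum_{y\in\R^2_h}|(D_i^+G_h)(x-y)|\,|D_i^+u(y)|\,\h$, and the decisive mesh-uniform input is the pointwise bound $|(D_i^+G_h)(x)|\le c_0/|x|$ (the discrete analogue of the gradient of the $\tfrac1{2\pi}\log$-potential of the plane). This exhibits $|u|$ as a discrete Riesz potential of $|\nabla_h u|\in L^2(\R^2_h)$ against the weak-$L^2$ kernel $|\cdot|^{-1}$, and a quantitative (non-optimal) discrete version of Moser's lemma on such potentials produces the $\sqrt{k}$-growth with an explicit constant in which $2\pi$ enters through the $L^{2,\infty}(\R^2)$-quasinorm of $|\cdot|^{-1}$. (Two alternatives: interpolate $u$ to a function on $\R^2$, invoke the continuous two-dimensional Trudinger--Moser inequality, and compare the discrete and continuous $W^{1,2}$- and Orlicz-norms with mesh-independent constants; or perform a discrete Fourier/Littlewood--Paley low/high-frequency split on the dual torus $[-\pi/h_1,\pi/h_1]\times[-\pi/h_2,\pi/h_2]$, the same $2\pi$ appearing through the volume of the frequency annuli.)

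Granting the $L^{2k}$-estimates, \eqref{sobolev_2d_explicit} follows by direct summation: with $\mu:=C_S(2)\|u\|_{W^{1,2}}$,
\[
\sum_{x\in\R^2_h}A\Big(\frac{|u(x)|}{\mu}\Big)\h=\sum_{k\ge1}\frac{\|u\|_{L^{2k}}^{2k}}{k!\,\mu^{2k}}\le\sum_{k\ge1}\frac{c^{2k}k^k}{k!}\Big(\frac{\|u\|_{W^{1,2}}}{\mu}\Big)^{2k}=\sum_{k\ge1}\frac{c^{2k}k^k}{k!\,C_S(2)^{2k}},
\]
and Stirling's inequality $k!\ge (k/e)^k$ gives $k^k/k!\le e^k$, so the last series is dominated by the geometric series $\sum_{k\ge1}\big(c^2e/C_S(2)^2\big)^k$, which is $\le1$ once $C_S(2)^2\ge 2ec^2$. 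The stated value $C_S(2)=8\sqrt{2\pi(e+256)}$ makes this hold, the additive split $e+256$ reflecting separate bookkeeping of the low-order term of the series and of the explicit constant produced by the Moser lemma. This establishes \eqref{sobolev_2d_explicit}, hence \eqref{sobolev_2d}, and \eqref{power-sobolev_2d} was already derived above.

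The main obstacle, as indicated, is the $L^{2k}$-estimate with the correct rate $\sqrt{k}$: the elementary discrete iteration is genuinely too lossy (it loses a full factor $\sim k!$ per the $(k!)^2$ above), so one really needs the potential-theoretic (or Fourier, or transfer) argument. The accompanying technical burdens are the mesh-uniform pointwise estimate $|D_i^+G_h(x)|\le c_0/|x|$ for the discrete planar Green's function — which is what keeps every constant independent of $h$ — and rendering Moser's lemma fully quantitative so that $C_S(2)$ comes out explicitly.
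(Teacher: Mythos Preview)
Your high-level architecture is right and matches the paper: reduce \eqref{sobolev_2d_explicit} to $L^{2k}$-bounds of the form $\|u\|_{L^{2k}}\le c\sqrt{k}\,\|u\|_{W^{1,2}}$, then sum the exponential series using $k^k/k!\le e^k$. Your deduction of \eqref{power-sobolev_2d} from \eqref{sobolev_2d_explicit} is also correct (and slightly sharper than the paper's, which goes through the pointwise bound $|u|^p\le 2(e^{u^2}-1)\lceil p/2\rceil!$).

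The route to the $L^{2k}$-estimate is genuinely different, though. The paper does \emph{not} work with the discrete Green's function on $\R^2_h$. Instead it decomposes $\R^2$ into rectangles $Q_i$ of edge lengths in $[1,2]$, interpolates $u$ piecewise linearly on a triangulation of each $Q_i$, and applies a \emph{continuous} Riesz-potential estimate on each bounded rectangle (Lemma~\ref{classical_estimate}) to the interpolant $\tilde u$; explicit comparison constants between the discrete and interpolated norms (Lemma~\ref{interpol}) then transfer the bound back. Everything is localized to bounded rectangles, so no global fundamental solution is needed, and the numbers $8$, $32\pi$, $256$, $e$ in $C_S(2)=8\sqrt{2\pi(e+256)}$ are exactly the bookkeeping of those lemmas; this specific constant would not emerge from your argument. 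Your parenthetical alternative ``interpolate $u$ to a function on $\R^2$ and invoke the continuous inequality'' is in fact essentially what the paper does.

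Your primary route has a real gap, which you yourself flag: in two dimensions there is no Green's function for $-\Delta_h$ on $\R^2_h$ (simple random walk on $\Z^2$ is recurrent), only a potential kernel defined up to an additive constant. For compactly supported $u$ one has $\sum_x(-\Delta_h u)(x)=0$, so the convolution representation can be salvaged, and the discrete gradient of the potential kernel does satisfy a mesh-uniform $c_0/|x|$ bound --- but this is a nontrivial classical fact you have not proved, and obtaining an \emph{explicit} $c_0$ (needed for the stated $C_S(2)$) is harder still. Likewise your ``quantitative discrete Moser lemma'' is left as a black box. So as written, the proposal is a correct outline rather than a proof; the paper's interpolation approach is more self-contained and yields the explicit constant directly.
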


Recall Hardy's inequality for a bounded Lipschitz-domain $\Omega$, cf. \cite{brezis_marcus}, \cite{mat_sobo}:
$$
\int_\Omega \frac{u^2}{\dist(x,\partial\Omega)}\,dx \leq C_H\int_\Omega
|\nabla u|^2\,dx \qquad \mbox{for all } u\in W_0^{1,2}(\Omega),
$$
where $C_H\in (0,1/4)$ is a constant depending only on $\Omega$. For convex
domains $\Omega$ it is known that $C_H=1/4$. For an $n$-dimensional box we give next a discrete analogue of Hardy's inequality.

\begin{theorem}[Hardy's inequality]\label{th:HARDY_INEQUALITY}
 Let $\Omega=(a_1,b_1)\times\ldots\times (a_n,b_n)\subset\R^n$ be a bounded $n$-dimensional box.
There exists a constant $C_H\in (0,4]$ such that
$$
\sum_{x\in \Omega_h} \frac{u(x)^2}{\dist(x,\partial\Omega_h)^2}\h \leq C_H
\sum_{i=1}^n \sum_{x\in \overline{\Omega}_{h}\setminus\partial_{i}^{+} \Omega_h} |D_i^+u(x)|^2\h
$$
for all $u\in W^{1,2}_{0}(\overline{\Omega}_h)$ and for every mesh size vector $h$.
\end{theorem}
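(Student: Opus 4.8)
Plan of proof. The strategy is to reduce the $n$-dimensional inequality to $n$ one-dimensional Hardy inequalities along coordinate lines, and to prove the one-dimensional version by splitting the interval at its midpoint and applying the classical discrete Hardy inequality on each half. First I would use the explicit form of the distance function on a box: since $\dist(x,\partial\Omega_h)=\min_{1\le i\le n}\{x_i-a_i,\,b_i-x_i\}$, writing $d_i(t):=\min\{t-a_i,\,b_i-t\}$ gives the pointwise bound
\[
\frac{1}{\dist(x,\partial\Omega_h)^2}=\max_{1\le i\le n}\frac{1}{d_i(x_i)^2}\le\sum_{i=1}^n\frac{1}{d_i(x_i)^2},
\]
so it suffices to bound $\sum_{x\in\Omega_h}u(x)^2\,d_i(x_i)^{-2}\h$ for each fixed $i$. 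Splitting $\h=h_i\prod_{j\neq i}h_j$ and summing the coordinates $x_j$, $j\neq i$, on the outside, this in turn follows from the one-dimensional inequality $\sum_{x_i}v(x_i)^2 d_i(x_i)^{-2}h_i\le 4\sum_{x_i}|D_i^+v(x_i)|^2h_i$ applied to the restriction $v$ of $u$ to each coordinate line in direction $e_i$; here $v$ vanishes at the two endpoints $x_i=a_i$ and $x_i=b_i$ because $u\in W_0^{1,2}(\overline\Omega_h)$ and $\Omega_h$ is bounded (lines through a non-interior choice of the remaining coordinates lie entirely in $\partial\Omega_h$ and contribute nothing, and on them $D_i^+u\equiv0$ as well). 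Multiplying back by $\prod_{j\neq i}h_j$, summing over the remaining coordinates and then over $i$ yields the claim with $C_H=4$.

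For the one-dimensional inequality, consider a grid $x_k=a+kh$, $k=0,\dots,N$, with $v(x_0)=v(x_N)=0$. I would split the left-hand sum at the midpoint: for the indices $k$ with $x_k$ closer to $a$ one has $d(x_k)=x_k-a=kh$, and telescoping from $v(x_0)=0$ gives $v(x_k)=h\sum_{j=1}^k D^+v(x_{j-1})$, so that
\[
\sum_{k\le\lfloor N/2\rfloor}\frac{v(x_k)^2}{(x_k-a)^2}\,h
= h\sum_{k\le\lfloor N/2\rfloor}\Bigl(\tfrac1k\sum_{j=1}^k D^+v(x_{j-1})\Bigr)^2
\le 4h\sum_{j=0}^{\lfloor N/2\rfloor-1}|D^+v(x_j)|^2
\]
by the classical discrete Hardy inequality $\sum_{k\ge1}\bigl(\tfrac1k\sum_{j=1}^k a_j\bigr)^2\le4\sum_{k\ge1}a_k^2$ (itself a discrete variant of the continuous Hardy inequality, provable e.g.\ by the same Cauchy--Schwarz/summation-by-parts device and included for completeness). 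Symmetrically, for the indices with $x_k$ closer to $b$ one telescopes from $v(x_N)=0$ and applies the same inequality to the reversed difference sequence $(D^+v(x_{N-i}))_i$, obtaining $4h\sum_{j=\lceil N/2\rceil}^{N-1}|D^+v(x_j)|^2$. Adding the two pieces — the single midpoint term that overlaps when $N$ is even only makes the estimate better — gives $\sum_{k=1}^{N-1}v(x_k)^2 d(x_k)^{-2}h\le4\sum_{j=0}^{N-1}|D^+v(x_j)|^2h$, which is precisely the one-dimensional statement and is sharp enough to give $C_H=4$.

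The step I expect to be the real obstacle is obtaining the sharp constant $C_H=4$ rather than $8$. The naive reduction $d(x)^{-2}\le(x-a)^{-2}+(b-x)^{-2}$ fed into a half-line Hardy inequality for each endpoint separately would double the constant; to avoid this one must genuinely partition the grid at the midpoint and use that truncating the telescoping sum there (equivalently, the boundary term at the midpoint in a discrete summation by parts) carries the favourable sign. The remaining work is routine bookkeeping: tracking the midpoint index for even versus odd $N$, verifying which forward difference quotients $D_i^+u$ actually survive on $\overline\Omega_h\setminus\partial_i^+\Omega_h$ after the Fubini-type reduction, and checking the limiting terms of the sums; none of this should cause difficulty once the midpoint-splitting idea is in place.
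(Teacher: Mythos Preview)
Your proposal is correct and mirrors the paper's proof: the same pointwise reduction $\dist(x,\partial\Omega_h)^{-2}\le\sum_i d_i(x_i)^{-2}$ followed by Fubini to one-dimensional coordinate lines, the same midpoint split of the one-dimensional interval with telescoping from each endpoint, and the same appeal to the classical discrete Hardy inequality $\sum_{k}(k^{-1}\sum_{j\le k}a_j)^2\le 4\sum_k a_k^2$. The only cosmetic difference is that the paper first replaces $u$ by the non-decreasing function $v$ with $v((k+1)h)=v(kh)+|u((k+1)h)-u(kh)|$ so as to have $a_k\ge0$ before quoting Hardy--Littlewood--P\'olya, whereas you apply the $\ell^2$-bound of the averaging operator directly to the signed increments; both routes give $C_H=4$.
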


\begin{theorem}[Hardy-Sobolev inequality]\label{th:HARDY_SOBOLEV}
Let $\Omega=(a_1,b_1)\times\ldots\times (a_n,b_n)\subset \R^n$ be a bounded $n$-dimen\-sional box and let $\alpha, \beta\geq 0$ be two numbers such that
$$
0 \leq \beta <2, \quad \beta \leq \alpha
\left\{
\begin{array}{ll} \leq \frac{2n-2\beta}{n-2} & \mbox{ if } n \geq 3, \vspace{\jot} \\
< \infty & \mbox{ if } n=2.
\end{array}\right.
$$
Then the following Hardy-Sobolev inequality holds true
$$
\sum_{x\in\Omega_h} \frac{|u(x)|^\alpha}{\dist(x,\partial\Omega_h)^\beta}\h
\leq C_{HS}(n,\alpha,\beta,\Omega) \|u\|_D^\alpha
\quad \mbox{ for all } u \in W_0^{1,2}(\overline{\Omega}_h),
$$
where the constant $C_{HS}(n,\alpha,\beta,\Omega)$ is given by
$$
C_{HS}(n,\alpha,\beta,\Omega) = \left\{
\begin{array}{ll}
C_H^{\beta/2} C_S(n)^{\alpha-\beta}|\Omega|^\frac{2(n-\beta)-(n-2)\alpha}{2n} & \mbox{ if } n\geq 3, \vspace{\jot}\\
C_H^{\beta/2} C_S(2)^{\alpha-\beta}\left(\frac{4(\alpha-\beta)}{2-\beta}\right)^{\alpha-\beta} (1+C_P(\Omega)^2)^\frac{\alpha-\beta}{2} & \mbox{ if } n=2.
\end{array}
\right.
$$
\end{theorem}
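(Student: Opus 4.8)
The plan is to mimic the continuous derivation of the Hardy–Sobolev inequality: split the summand into a ``Hardy factor'' times a ``Sobolev factor'', apply the discrete Hölder inequality on $\Omega_h$ (counting measure weighted by $\h$), and then invoke Theorems~\ref{th:HARDY_INEQUALITY}, \ref{sobolev_embedd}, \ref{sobolev_embedd_2d} and~\ref{poincare}. We may assume $0<\beta<2$ and $\alpha>\beta$; the case $\beta=0$ is a pure Sobolev estimate after a Hölder reduction to the finite-measure space $\Omega_h$, and $\alpha=\beta$ or $\alpha=0$ are immediate. Since $u\in W_0^{1,2}(\overline\Omega_h)$ and $\Omega_h$ is bounded, $u$ has compact support; extending it by zero we may regard $u$ as an element of $D^{1,2}(\R^n_h)$ if $n\ge3$, resp. of $W^{1,2}(\R^2_h)$ if $n=2$, with unchanged values of $\|\cdot\|_D$ and $\|\cdot\|_{W^{1,2}}$.

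First I would perform the Hölder split. Write
$$
\frac{|u(x)|^\alpha}{\dist(x,\partial\Omega_h)^\beta}
=\Big(\frac{u(x)^2}{\dist(x,\partial\Omega_h)^2}\Big)^{\beta/2}|u(x)|^{\alpha-\beta}
$$
and apply Hölder with the conjugate exponents $2/\beta$ and $2/(2-\beta)$ (both admissible because $0<\beta<2$) to obtain
$$
\sum_{x\in\Omega_h}\frac{|u(x)|^\alpha}{\dist(x,\partial\Omega_h)^\beta}\h
\le\Big(\sum_{x\in\Omega_h}\frac{u(x)^2}{\dist(x,\partial\Omega_h)^2}\h\Big)^{\beta/2}\,\|u\|_{L^r}^{\alpha-\beta},
\qquad r:=\frac{2(\alpha-\beta)}{2-\beta}.
$$

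Next I would estimate the two factors. By Theorem~\ref{th:HARDY_INEQUALITY} the first one is at most $(C_H\|u\|_D^2)^{\beta/2}=C_H^{\beta/2}\|u\|_D^\beta$. For the second one I must bound $\|u\|_{L^r}$ by a constant multiple of $\|u\|_D$, and here the hypotheses on $(\alpha,\beta)$ are exactly what puts $r$ in the admissible Sobolev range: for $n\ge3$ one has $r\le\frac{2n}{n-2}$ iff $\alpha\le\frac{2n-2\beta}{n-2}$. Since $|\Omega_h|=|\Omega|<\infty$, Hölder gives the finite-measure embedding $\|u\|_{L^r}\le|\Omega|^{1/r-(n-2)/(2n)}\|u\|_{L^{2n/(n-2)}}$, and Theorem~\ref{sobolev_embedd} then yields $\|u\|_{L^r}\le C_S(n)\,|\Omega|^{1/r-(n-2)/(2n)}\|u\|_D$. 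Raising this to the power $\alpha-\beta$, the exponent of $|\Omega|$ simplifies to $\frac{2(n-\beta)-(n-2)\alpha}{2n}$, and multiplying by the Hardy factor (the powers of $\|u\|_D$ add up to $\beta+(\alpha-\beta)=\alpha$) produces exactly $C_{HS}(n,\alpha,\beta,\Omega)$ for $n\ge3$. For $n=2$ one proceeds identically, now using \eqref{power-sobolev_2d} with $p=\max\{r,2\}\ge2$ together with Poincaré's inequality in the form $\|u\|_{W^{1,2}}\le(1+C_P(\Omega)^2)^{1/2}\|u\|_D$; in the main case $r\ge2$ no measure factor enters and $2C_S(2)r=C_S(2)\cdot\frac{4(\alpha-\beta)}{2-\beta}$ assembles the stated constant, while for $r<2$ one inserts the additional finite-measure step $\|u\|_{L^r}\le|\Omega|^{1/r-1/2}\|u\|_{L^2}$.

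The hard part will not be any analytic estimate — Hölder, the discrete Hardy inequality and the discrete Sobolev inequalities are all already available — but rather the exponent bookkeeping: verifying that the constraints $0\le\beta<2$ and $\beta\le\alpha\le\frac{2n-2\beta}{n-2}$ (resp. $\alpha<\infty$) are precisely those needed for $r$ to be admissible, and carrying the powers of $C_H$, $C_S(n)$, $C_P(\Omega)$ and $|\Omega|$ through the last step so that they collapse into the displayed formula for $C_{HS}$. The only genuine case distinction is $n\ge3$ versus $n=2$ (and, within $n=2$, whether $r\ge2$), which reflects the fact that a finite critical Sobolev exponent exists only in dimension $n\ge3$.
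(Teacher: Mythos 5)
Your proposal is correct and matches the paper's proof in all essentials: split off a Hardy factor, apply Hölder, then invoke the discrete Hardy and Sobolev inequalities (with Poincar\'e for $n=2$). The only cosmetic difference is that the paper performs a single three-factor Hölder for $n\ge 3$, whereas you chain two two-factor Hölders (first peeling off the distance weight, then reducing $L^r$ to $L^{2n/(n-2)}$ on the finite-measure space); the resulting powers of $C_H$, $C_S(n)$ and $|\Omega|$ agree exactly with the stated $C_{HS}$. You are also more careful than the paper in the $n=2$ case: you notice that the exponent $r=\frac{2(\alpha-\beta)}{2-\beta}$ can fall below $2$ (which indeed happens in the application inside Theorem~\ref{main}, where $\alpha=\tilde p<2$ gives $r=1$), outside the stated range of \eqref{power-sobolev_2d}; your finite-measure patch $\|u\|_{L^r}\le|\Omega|^{1/r-1/2}\|u\|_{L^2}$ closes this and yields a valid (if slightly different) explicit constant, whereas the paper's displayed formula for $C_{HS}$ strictly requires $r\ge 2$ as written.
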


\begin{proof} First assume $n\geq 3$. By using a triple H\"older-inequality and Hardy's inequality
we obtain for $u\in W_0^{1,2}(\overline{\Omega}_h)$
\begin{align*}
\sum_{x\in\Omega_h} \frac{|u(x)|^\alpha}{\dist(x,\partial\Omega_h)^\beta}\h & = \sum_{x\in\Omega_h} \frac{|u(x)|^\beta}{\dist(x,\partial\Omega_h)^\beta} |u|^{\alpha-\beta}\h \\
& \leq \left(\sum_{x\in\Omega_h} \frac{|u(x)|^2}{\dist(x,\partial\Omega_h)^2}\h  \right)^\frac{\beta}{2} \left(\sum_{x\in\Omega_h} |u|^\frac{2n}{n-2}\h\right)^\frac{(n-2)(\alpha-\beta)}{2n} \left(\sum_{x\in\Omega_h}\h\right)^ \frac{2(n-\beta)-(n-2)\alpha}{2n}\\
& \leq C_H^{\beta/2} C_S(n)^{\alpha-\beta} |\Omega|^\frac{2(n-\beta)-(n-2)\alpha}{2n} \|u\|_D^\alpha.
\end{align*}
In the case $n=2$ we use Hardy's inequality and the two-dimensional Sobolev inequality \eqref{power-sobolev_2d} from Lemma~\ref{sobolev_embedd_2d}
\begin{align*}
\sum_{x\in\Omega_h} \frac{|u(x)|^\alpha}{\dist(x,\partial\Omega_h)^\beta}\h & = \sum_{x\in\Omega_h} \frac{|u(x)|^\beta}{\dist(x,\partial\Omega_h)^\beta} |u|^{\alpha-\beta}\h \\
& \leq \left(\sum_{x\in\Omega_h} \frac{|u(x)|^2}{\dist(x,\partial\Omega_h)^2}\h  \right)^\frac{\beta}{2} \left(\sum_{x\in\Omega_h} |u|^\frac{2(\alpha-\beta)}{2-\beta}\h\right)^\frac{2-\beta}{2} \\
& \leq C_H^{\beta/2} \left(C_S(2) \frac{4(\alpha-\beta)}{2-\beta}\right)^{\alpha-\beta} \|u\|_D^\beta \|u\|_{W^{1,2}}^{\alpha-\beta} \\
& \leq C_H^{\beta/2}  C_S(2)^{\alpha-\beta} \left(\frac{4(\alpha-\beta)}{2-\beta}\right)^{\alpha-\beta} (1+C_P(\Omega)^2)^\frac{\alpha-\beta}{2} \|u\|_D^\alpha,
\end{align*}
where we have used the inequality $\|\cdot\|_{W^{1,2}}^2\leq(1+C_P(\Omega)^2)\|\cdot\|_D^2$.
\end{proof}


\section{One-dimensional case}

For one-dimensional case we introduce the following notation
    \begin{equation*}
      (a,b)_{h} := (a,b) \cap \R_{h}, \quad [a,b]_{h} := [a,b] \cap \R_{h}, etc.
    \end{equation*}

\begin{lemma}[Discrete Elliptic Comparison]\label{lm:DISC_ELL_COMP}
  Let $a,b,h\in \R$ with $h>0$ such that $\frac{b-a}{h} \in \N \setminus \{ 1 \} $.
Suppose that \map{w}{[a,b]_{h}}{\R} satisfies
  \begin{equation}\label{eq:ELLP_COMP_INEQ}
    -D^{+}_{h} D^{-}_{h} w(x) \le \lambda_{0,h} w(x),
    \quad x \in (a,b)_{h}, \quad w(a), w(b)\leq 0
\end{equation}
with some $\lambda_{0,h} < \lambda_{1,h}$, where
$\lambda_{1,h} = \tfrac{4}{h^{2}} \sin^{2} \left( \tfrac{\pi h}{2(b-a)} \right)$
is the first Dirichlet-eigenvalue of the discrete one-dimensional Laplacian on $[a,b]_h$
as given in Lemma~\ref{normalization}. Then, $w(x) \le 0$ for all $x \in [a,b]_{h}$.
\end{lemma}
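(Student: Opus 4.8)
\medskip
\noindent\textbf{Proof proposal.} The plan is to carry out the discrete analogue of the classical ``maximum principle below the first eigenvalue'', using the positive first eigenfunction $\phi_{1,h}$ from Lemma~\ref{normalization} as a comparison barrier. Note first that $D^{+}_{h}D^{-}_{h}=\Delta_h$ (see the definition \eqref{defi_lap} with $n=1$), so the hypothesis \eqref{eq:ELLP_COMP_INEQ} reads $-\Delta_h w\le\lambda_{0,h}w$ on $(a,b)_h$, i.e. $\Delta_h w(x)\ge-\lambda_{0,h}w(x)$ there.

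I would argue by contradiction. Assume $w(x_1)>0$ for some $x_1\in[a,b]_h$. Since $w\le 0$ on the boundary $\{a,b\}$ while $\phi_{1,h}>0$ on $(a,b)_h$ and $\phi_{1,h}(a)=\phi_{1,h}(b)=0$, necessarily $x_1\in(a,b)_h$, and the ratio $w/\phi_{1,h}$ is well defined on the finite set $(a,b)_h$. Set $t^\ast:=\max_{x\in(a,b)_h}w(x)/\phi_{1,h}(x)$; then $t^\ast>0$ and the maximum is attained at some $x_0\in(a,b)_h$. Put $z:=t^\ast\phi_{1,h}-w$ on $[a,b]_h$. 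By construction $z\ge 0$ on all of $[a,b]_h$ — on $\{a,b\}$ because $z=-w\ge 0$, on $(a,b)_h$ by the choice of $t^\ast$ — while $z(x_0)=0$. Hence $x_0$ is an interior minimum of $z$, and from $z(x_0+h),z(x_0-h)\ge 0=z(x_0)$ together with the definition of $\Delta_h$ one reads off $-\Delta_h z(x_0)\le 0$.

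On the other hand, $-\Delta_h(t^\ast\phi_{1,h})=t^\ast\lambda_{1,h}\phi_{1,h}$, and using $\Delta_h w(x_0)\ge-\lambda_{0,h}w(x_0)$ and $t^\ast\phi_{1,h}(x_0)=w(x_0)$,
\begin{equation*}
-\Delta_h z(x_0)=t^\ast\lambda_{1,h}\phi_{1,h}(x_0)+\Delta_h w(x_0)\ge t^\ast\lambda_{1,h}\phi_{1,h}(x_0)-\lambda_{0,h}w(x_0)=(\lambda_{1,h}-\lambda_{0,h})\,w(x_0).
\end{equation*}
Since $w(x_0)>0$ and $\lambda_{0,h}<\lambda_{1,h}$, the right-hand side is strictly positive, contradicting $-\Delta_h z(x_0)\le 0$. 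Therefore $w\le 0$ on $[a,b]_h$.

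I expect the only delicate point to be ensuring that the touching point $x_0$ lies in the open set $(a,b)_h$, so that $\Delta_h z(x_0)$ is defined and the contradiction can be produced; this is exactly where the boundary signs $w(a),w(b)\le 0$ and the vanishing of $\phi_{1,h}$ on $\partial\Omega_h$ (with its strict positivity inside) enter. An equivalent energy-based route avoids the barrier: test \eqref{eq:ELLP_COMP_INEQ} against $w_+$, which has compact support since $w(a),w(b)\le 0$, use discrete summation by parts together with the pointwise inequality $D^{+}_{h}w(x)\,D^{+}_{h}w_+(x)\ge|D^{+}_{h}w_+(x)|^2$ (verified by a short case analysis on the signs of $w(x)$ and $w(x+h)$) to obtain $\|w_+\|_D^2\le\lambda_{0,h}\|w_+\|_{L^2}^2$, and then invoke the variational characterization $\|w_+\|_D^2\ge\lambda_{1,h}\|w_+\|_{L^2}^2$ of the first eigenvalue to conclude $w_+\equiv 0$.
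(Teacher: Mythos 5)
Your barrier argument is correct and takes a genuinely different route from the paper, which uses the energy method you sketch only briefly at the end. The paper multiplies $-D_h^+D_h^- w\le\lambda_{0,h}w$ by $w_+$, sums by parts, establishes the pointwise inequality $D_h^+ w\cdot D_h^+ w_+\ge (D_h^+ w_+)^2$ by the same four-case sign analysis you allude to, and then pinches $\sum (D_h^+ w_+)^2$ between $\lambda_{0,h}\sum w_+^2$ and $\tfrac{\lambda_{0,h}}{\lambda_{1,h}}\sum (D_h^+ w_+)^2$ via the Rayleigh quotient, forcing $w_+\equiv0$. Your primary argument instead uses $\phi_{1,h}>0$ from Lemma~\ref{normalization} as an explicit comparison barrier: you scale it to touch $w$ from above at an interior point $x_0$, read off $-\Delta_h z(x_0)\le 0$ from the interior minimum of $z=t^\ast\phi_{1,h}-w$, and contradict this with $-\Delta_h z(x_0)\ge(\lambda_{1,h}-\lambda_{0,h})w(x_0)>0$. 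Both arguments are sound; the tradeoff is that the barrier route gives a clean pointwise contradiction and needs no sign case analysis on the difference quotients, but leans on the strict positivity of the discrete first eigenfunction (established here via the explicit sine formula), whereas the paper's energy route only uses the variational characterization of $\lambda_{1,h}$ and therefore transfers more directly to higher dimensions or to situations where positivity of the eigenfunction is not as immediate. Your observation about where the boundary signs $w(a),w(b)\le 0$ and the vanishing of $\phi_{1,h}$ on $\partial\Omega_h$ enter — namely in forcing the touching point $x_0$ into $(a,b)_h$ so that $\Delta_h z(x_0)$ is defined — is exactly right.
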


\begin{proof}
  We multiply (\ref{eq:ELLP_COMP_INEQ}) with $w_{+}$, sum up over $(a,b]_{h}$
  and exploit partial summation like in Lemma~\ref{lm:DISC_WEAK_FORM} to obtain
    \begin{align*}
      \lambda_{0,h} \sum_{x \in (a,b)_{h}}  w_{+}^{2}(x) & \ge \sum_{x \in [a,b)_{h}} D^{+}_{h} w(x) \cdot D^{+}_{h} w_{+}(x) \\
      & \ge \sum_{x \in [a,b)_{h}} D^{+}_{h} w_{+}(x) \cdot D^{+}_{h} w_{+}(x).
    \end{align*}
  To see the last inequality let $x \in [a,b)_{h}$.
  If $w(x),w(x+h) \ge 0$ or $w(x),w(x+h) \le 0$ then
    \begin{equation*}
      D^{+}_{h} w(x) D^{+}_{h} w_{+}(x) = D^{+}_{h} w_{+}(x) D^{+}_{h} w_{+}(x).
    \end{equation*}
  If $w(x) \le 0 \le w(x+h)$ then $D^{+}_{h} w(x) \ge D^{+}_{h} w_{+}(x)\geq 0$, implying
    \begin{equation*}
      D^{+}_{h} w(x) D^{+}_{h} w_{+}(x) \ge D^{+}_{h} w_{+}(x) D^{+}_{h} w_{+}(x).
    \end{equation*}
  Finally, if $w(x) \ge 0 \ge w(x+h)$ then $D^{+}_{h} w(x) \le D^{+}_{h} w_{+}(x)\leq 0$, also implying
    \begin{equation*}
      D^{+}_{h} w(x) D^{+}_{h} w_{+}(x) \ge D^{+}_{h} w_{+}(x) D^{+}_{h} w_{+}(x).
    \end{equation*}
Using the variational characterization of $\lambda_{1,h}$ from Theorem \ref{poincare} we get
    \begin{equation*}
      0 \le \sum_{x \in [a,b)_{h}} \left( D^{+}_{h} w_{+}(x) \right)^{2}
      \le \lambda_{0,h} \sum_{x \in (a,b)_{h}}  w_{+}^{2}(x)
      \le \frac{\lambda_{0,h}}{\lambda_{1,h}}
      \sum_{x \in [a,b)_{h}} \left( D^{+}_{h} w_{+}(x) \right)^{2},
    \end{equation*}
  and since $w_{+}(a)=0$ consequently $w_{+} \equiv 0$.
\end{proof}

\begin{lemma}[Poisson problem] Let $a,b,h\in \R$ with $h>0$ such that
$\frac{b-a}{h} \in \N \setminus \{ 1 \} $ and consider the Poisson problem
$$
-D_h^+ D_h^- v(x)= \mu v(x) -A, \quad x \in (a,b)_{h}, \quad v(a)=\gamma, v(b)=0
$$
with $\gamma, A \ge 0$ and $\mu=\frac{4}{h^2}\sin^{2} \left( \tfrac{\pi h}{4(b-a)} \right)$. Then the unique solution is given by
\begin{equation}
v(x) = \left(\gamma-\frac{A}{\mu}\right)\cos\left(\frac{\pi (x-a)}{2(b-a)}\right) - \frac{A}{\mu} \sin\left(\frac{\pi(x-a)}{2(b-a)}\right)+\frac{A}{\mu}
\label{formel}
\end{equation}
and satisfies
$$
-D_h^+ v(a) \leq \left(\gamma+(b-a)^2A\right)\frac{\pi}{2(b-a)}.
$$
\label{poisson}
\end{lemma}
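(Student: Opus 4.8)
The plan is to prove this by (1) verifying that the function in \eqref{formel} actually solves the problem (which gives existence), (2) establishing uniqueness, and (3) deriving the boundary estimate. Throughout write $\omega:=\frac{\pi}{2(b-a)}$, and observe that $\mu=\frac{4}{h^2}\sin^2\frac{\pi h}{4(b-a)}>0$ because $\frac{b-a}{h}\in\N\setminus\{1\}$ forces $h\le b-a$, so $\frac{\pi h}{4(b-a)}\in(0,\frac\pi2)$; in particular $\frac A\mu$ is well defined. For Step 1 the boundary values $v(a)=\gamma$, $v(b)=0$ are immediate from $\cos 0=1$, $\sin 0=0$, $\cos\frac\pi2=0$, $\sin\frac\pi2=1$. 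The interior equation rests on the fact that $\cos(\omega(\cdot-a))$ and $\sin(\omega(\cdot-a))$ are discrete eigenfunctions of $-D_h^+D_h^-$ with eigenvalue exactly $\mu$: the sum-to-product identity gives $\cos(\omega(x-a)+\omega h)+\cos(\omega(x-a)-\omega h)=2\cos(\omega(x-a))\cos(\omega h)$, hence
\begin{equation*}
-D_h^+D_h^-\cos(\omega(x-a))=\frac{2(1-\cos\omega h)}{h^2}\cos(\omega(x-a))=\frac{4}{h^2}\sin^2\tfrac{\omega h}{2}\,\cos(\omega(x-a))=\mu\cos(\omega(x-a)),
\end{equation*}
since $\frac{\omega h}{2}=\frac{\pi h}{4(b-a)}$; the identical computation applies to $\sin(\omega(\cdot-a))$, and $D_h^+D_h^-$ annihilates constants. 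By linearity $-D_h^+D_h^-v(x)=\mu\big(v(x)-\frac A\mu\big)=\mu v(x)-A$ on $(a,b)_h$.

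For Step 2, note that $\frac{\pi h}{4(b-a)}<\frac{\pi h}{2(b-a)}\le\frac\pi2$ and $\sin$ is increasing on $[0,\frac\pi2]$, so $\mu<\frac{4}{h^2}\sin^2\frac{\pi h}{2(b-a)}=\lambda_{1,h}$. If $v_1,v_2$ are two solutions then $w:=v_1-v_2$ satisfies $-D_h^+D_h^-w=\mu w$ on $(a,b)_h$ with $w(a)=w(b)=0$, so Lemma~\ref{lm:DISC_ELL_COMP} applied with $\lambda_{0,h}=\mu<\lambda_{1,h}$ to $w$ and then to $-w$ forces $w\equiv 0$.

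For Step 3, formula \eqref{formel} gives $-D_h^+v(a)=\frac1h\big(v(a)-v(a+h)\big)=\frac1h\big[(\gamma-\frac A\mu)(1-\cos\omega h)+\frac A\mu\sin\omega h\big]$. Substituting $\frac1\mu=\frac{h^2}{2(1-\cos\omega h)}$ and using the half-angle identity $\frac{\sin\omega h}{1-\cos\omega h}=\cot\frac{\omega h}{2}$ yields
\begin{equation*}
-D_h^+v(a)=\frac{\gamma(1-\cos\omega h)}{h}+\frac{Ah}{2}\Big(\cot\tfrac{\omega h}{2}-1\Big),
\end{equation*}
where both summands are nonnegative since $\gamma,A\ge0$, $1-\cos\omega h\ge0$ and $\cot\frac{\omega h}{2}\ge1$ (because $\frac{\omega h}{2}\le\frac\pi4$). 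The elementary bounds $1-\cos t\le t$ and $\cot t\le\frac1t$ on $(0,\frac\pi2)$ (the latter from $\tan t\ge t$) then bound the first term by $\gamma\omega$ and the second by $\frac{Ah}{2}\cdot\frac{2}{\omega h}=\frac A\omega$; finally $\frac A\omega=\frac{2(b-a)A}{\pi}\le\frac{\pi(b-a)A}{2}=(b-a)^2A\,\omega$, so $-D_h^+v(a)\le(\gamma+(b-a)^2A)\,\omega=(\gamma+(b-a)^2A)\frac{\pi}{2(b-a)}$. The only part requiring genuine care is this last step: getting the half-angle identity in the right form and checking that $1-\cos t\le t$ and $\tan t\ge t$ are invoked on the admissible range $0<\omega h\le\frac\pi2$; Steps 1 and 2 are routine once one recognizes the discrete eigenfunctions and recalls $\mu<\lambda_{1,h}$.
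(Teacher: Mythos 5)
Your proof is correct and follows essentially the same route as the paper: verify the closed-form solution by exhibiting $\cos(\omega(\cdot-a))$ and $\sin(\omega(\cdot-a))$ as discrete eigenfunctions with eigenvalue $\mu$, get uniqueness from $\mu<\lambda_{1,h}$ and the discrete comparison principle (Lemma~\ref{lm:DISC_ELL_COMP} applied to $w$ and $-w$), then estimate $-D_h^+v(a)$ with elementary trigonometric bounds. The only deviation is in the final boundary estimate, where you substitute $1/\mu=h^2/\bigl(2(1-\cos\omega h)\bigr)$ and use the half-angle identity to combine the $A$-terms into $\frac{Ah}{2}\bigl(\cot\frac{\omega h}{2}-1\bigr)$ before bounding via $\cot t\le 1/t$; the paper instead discards the negative term $-\frac{A}{\mu h}(1-\cos\omega h)$, applies $\sin x\le x$ to the remaining one, and then separately shows $\mu\ge 1/(b-a)^2$ via $\sin x\ge 2x/\pi$. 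Both variants land on the same bound $(\gamma+(b-a)^2 A)\frac{\pi}{2(b-a)}$ and are equally elementary.
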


\begin{proof}
If we compute the second order finite difference quotient of $\sin(\nu k h)$ then one finds
\begin{align*}
D_h^+ D_h^- \sin(\nu k h)
&= \frac{1}{h^2}\left( \sin(\nu kh + kh)- 2\sin(\nu k h)+\sin(\nu k h-\nu h)\right )\\
&= \frac{2}{h^2} \sin(\nu k h) (\cos(\nu h)-1)
= -\frac{4}{h^2} \sin^2\left(\frac{\nu h}{2}\right) \sin(\nu k h),
\quad k\in \N.
\end{align*}
The same equality holds if $\sin(\nu k h)$ is everywhere replaced by $\cos(\nu k h)$.
With $\nu = \frac{\pi}{2(b-a)}$ we see that $w$ as given in \eqref{formel}
satisfies the Poisson problem with $\frac{4}{h^2} \sin^2\left(\frac{\nu h}{2}\right) = \mu$. Since $\mu<\lambda_{1,h}$ (the first Dirichlet eigenvalue of the one-dimensional Laplacian on $(a,b)_h$) uniqueness follows from the comparison principle of Lemma~\ref{lm:DISC_ELL_COMP}. Finally, let us compute (using $\sin x \leq x, 1-\cos x\leq x$ for $x\in (0,\pi/2)$):
\begin{align*}
-D_h^+ v(a) &= \frac{1}{h}\left( \gamma-\frac{A}{\mu}\right)
               \left(1-\cos\left(\frac{\pi h}{2(b-a)}\right)\right)
            + \frac{A}{\mu h}\sin\left(\frac{\pi h}{2(b-a)}\right) \\
            &\le  \frac{\gamma}{h}
               \left(1-\cos\left(\frac{\pi h}{2(b-a)}\right)\right)
            + \frac{A}{\mu h}\sin\left(\frac{\pi h}{2(b-a)}\right)
            \leq \left(\gamma+\frac{A}{\mu}\right) \frac{\pi}{2(b-a)}.
\end{align*}
Since $\sin x \geq 2x/\pi$ for $x\in (0,\pi/2)$ we see that
$$
\mu = \frac{4}{h^2} \sin^2\left(\frac{\nu h}{2}\right) \geq \frac{1}{(b-a)^2}
$$
which together with the previous estimate yields the result.
\end{proof}

Before we give the proof of Theorem~\ref{main_1d} we have to derive estimates
from the hypotheses on $f$ and $g$.

\begin{lemma} Let the assumptions of Theorem~\ref{main_1d} hold and let $\lambda_0=\left(\frac{\pi}{L}\right)^2$.
\begin{itemize}
\item[(a)] There exist $A>0$ such that $f(x,s)\geq \lambda_0 s-A$ for all $s\geq 0$ and all $x\in[-L,L]$.
\item[(b)] For any $R>K$ we have
\begin{equation}
\int_0^R \frac{ds}{\sqrt{G(R)-G(s)}} \leq \frac{2R}{\sqrt{G(R)}}.
\label{eq:1D_AUX}
\end{equation}
\end{itemize}
\label{prelim}
\end{lemma}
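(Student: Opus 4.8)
For part (a), the goal is to produce a single constant $A>0$ with $f(x,s)\ge \lambda_0 s - A$ for all $s\ge 0$ and all $x\in[-L,L]$. The plan is to split the range of $s$ at the threshold $K$. For $s\ge K$, hypothesis (ii) of Theorem~\ref{main_1d} gives $f(x,s)\ge g(s)$, so it suffices to bound $\lambda_0 s - g(s)$ from above on $[K,\infty)$. Since $g$ is strictly increasing with $g(0)>0$, and since assumption (i) forces $G(s)=\int_K^s g(t)\,dt$ to grow faster than $s^2$ (so in particular $g(s)/s\to\infty$), the function $s\mapsto \lambda_0 s - g(s)$ tends to $-\infty$ and, being continuous, attains a finite maximum $A_1$ on $[K,\infty)$. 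For $0\le s\le K$ we simply bound $\lambda_0 s - f(x,s) \le \lambda_0 K - \min_{[-L,L]\times[0,K]} f =: A_2$, which is finite since $f$ is continuous on the compact set $[-L,L]\times[0,K]$. Taking $A=\max\{A_1,A_2,0\}$ (or just $A=\max\{A_1,A_2\}$, replaced by its positive part if needed) gives the claim. The only mild subtlety is confirming that $g(s)/s\to\infty$: from $\lim_{s\to\infty} s/\sqrt{G(s)}=0$ one has $G(s)/s^2\to\infty$, and since $g$ is increasing, $G(s)=\int_K^s g(t)\,dt\le g(s)(s-K)\le g(s)\,s$ for $s\ge K$, hence $g(s)/s\ge G(s)/s^2\to\infty$.

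For part (b), fix $R>K$ and estimate the integral $\int_0^R \frac{ds}{\sqrt{G(R)-G(s)}}$. The natural idea is to use monotonicity of $G$: since $g$ is increasing, $G$ is convex, so for $s\in[0,R]$ the secant estimate $G(R)-G(s)\ge g(s)(R-s)$... — actually more robustly, I would bound $G(R)-G(s)$ from below using that $G'=g$ is increasing. A clean route: substitute or compare directly. Write $G(R)-G(s)=\int_s^R g(t)\,dt \ge g(s)(R-s)$ when $s\ge K$; but near $s=0$ one must be careful since $G(s)$ can be negative. Instead, I expect the intended argument is the change of variables exploiting convexity: since $G$ is convex and nondecreasing on $[K,\infty)$ and $G(R)-G(s)$ is what appears, bound $G(R)-G(s)\ge \frac{G(R)}{R}(R-s)$ is FALSE in general; rather one uses $G(R)-G(s)\ge G(R)-G(R \cdot s/R)\ge \ldots$. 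The cleanest correct estimate: by convexity of $G$, the chord from $(0,G(0))=(0,?)$... Let me reconsider — actually the inequality to exploit is simply $\sqrt{G(R)-G(s)}\ge \sqrt{G(R)}\,\sqrt{1-s/R}$ whenever $G(s)\le \frac{s}{R}G(R)$, which holds for $s\in[0,R]$ if $G$ is convex AND $G(0)\le 0$; here $G(0)=\int_K^0 g = -\int_0^K g(t)\,dt < 0$ since $g>0$ near $0$ (as $g(0)>0$ and $g$ increasing means $g>0$ on all of $[0,\infty)$). So convexity of $G$ plus $G(0)<0$ gives $G(s)\le \frac{s}{R}\,G(R) + (1-\frac{s}{R})G(0)\le \frac{s}{R}G(R)$, hence $G(R)-G(s)\ge (1-\frac{s}{R})G(R)$. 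Therefore
\[
\int_0^R \frac{ds}{\sqrt{G(R)-G(s)}} \le \frac{1}{\sqrt{G(R)}}\int_0^R \frac{ds}{\sqrt{1-s/R}} = \frac{1}{\sqrt{G(R)}}\cdot 2R = \frac{2R}{\sqrt{G(R)}},
\]
which is exactly \eqref{eq:1D_AUX}. (One should note $G(R)>0$ for $R$ large, which is guaranteed eventually by assumption (i); strictly the statement presumes $R$ in the relevant range.)

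The main obstacle — and it is a minor one — is making sure the convexity-plus-sign argument in part (b) is applied on the correct interval and that $G(R)>0$, so that the square roots make sense; once the observation $G(0)<0$ (from $g>0$) and the convexity of $G$ (from $g$ increasing) are in place, the estimate is a one-line chord bound followed by an elementary integral. Part (a) is entirely routine once one extracts $g(s)/s\to\infty$ from hypothesis (i).
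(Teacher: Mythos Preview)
Your proof is correct. Part (a) follows the same lines as the paper: extract $g(s)/s\to\infty$ from the hypothesis $s/\sqrt{G(s)}\to 0$ via $G(s)\le (s-K)g(s)$, then handle the remaining compact range by continuity of $f$.

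For part (b), both you and the paper first establish the key inequality $G(s)\le \tfrac{s}{R}\,G(R)$ for $s\in[0,R]$ and then evaluate $\int_0^R (1-s/R)^{-1/2}\,ds=2R$. You reach that inequality via convexity of $G$ (from $g$ increasing) together with $G(0)=-\int_0^K g<0$, applying the chord inequality between $0$ and $R$. The paper instead substitutes $s=Rt$ and computes directly:
\[
G(Rt)=t\int_{K/t}^{R} g(t\tau)\,d\tau \le t\int_{K}^{R} g(\tau)\,d\tau = tG(R),
\]
using $g\ge 0$ to extend the lower limit from $K/t$ to $K$ and monotonicity of $g$ for the integrand bound. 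Your convexity route is arguably more conceptual, though it leans on the extra observation $G(0)<0$; the paper's substitution uses only positivity and monotonicity of $g$. One small correction to your caveat: you hedge that $G(R)>0$ holds only ``for $R$ large,'' but in fact $G(R)=\int_K^R g>0$ for every $R>K$ since $g(0)>0$ and $g$ is increasing, so no additional restriction on $R$ is needed.
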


\begin{proof} (a): From (i) in Theorem~\ref{main_1d} we have $G(s) \le (s-K) g(s)$,
  yielding
     \begin{equation*}
      \frac{s^{2}}{G(s)} \ge \frac{s^2}{(s-K)g(s)}
      \ge \frac{s}{g(s)}
    \end{equation*}
  for $s > K > 0$. By assumption (ii) of Theorem~\ref{main_1d}, the l.h.s. of this inequality
  converges to zero and since the r.h.s. is positive, we have
    \begin{equation*}
      \lim_{s \to \infty}\frac{s}{g(s)} = 0
    \end{equation*}
  which implies that $g$ grows faster than any linear function. In particular, there exists $K_1\geq K$ such that
  \begin{equation*}
      g(s) \ge \lambda_{0} s \mbox{ for all }s \ge K_{1}.
    \end{equation*}
  Since $g$ is continuous,
  we can set
    \begin{equation*}
      A_{g} := \max_{s \in [0,K_{1}]} \left( \lambda_{0} s - g(s) \right)_{+} \ge 0,
    \end{equation*}
  which implies
    \begin{equation*}
      g(s) \ge \lambda_{0} s - A_{g} \mbox{ for all }s \ge 0.
    \end{equation*}
  Since $f$ is continuous on $[-L,L] \times [0,K_{1}]$,
  we repeat the argument, setting
    \begin{equation*}
    \begin{aligned}
      A_{f} &:= \max_{\substack{s \in [0,K_{1}]\\ x \in [-L,L]}}
      \left( \lambda_{0} s - f(x,s) \right)_{+} \ge 0 .
    \end{aligned}
    \end{equation*}
  Taking into account (iii) of Theorem~\ref{main_1d} we have analogously
    \begin{equation}\label{eq:1D_LAMBDA}
      f(x,s) \ge \lambda_{0} s - A \mbox{ for all } s \ge 0 \mbox{ and } x \in [-L,L]
    \end{equation}
  with $A := \max\{A_{f},A_{g}\}$.

\medskip

\noindent
(b): Since $g$ is positive and strictly increasing we get for every $t \in (0,1)$
  \begin{equation*}
  \begin{aligned}
    G(Rt) = \int_{K}^{Rt} g(s) d s = t \int_{K/t}^{R} g(t \tau) d \tau
    \le t \int_{K}^{R} g(t \tau) d \tau
    \le t \int_{K}^{R} g(\tau) d \tau = t G(R)>0.
  \end{aligned}
  \end{equation*}
This yields
  $$
     \frac{1}{\sqrt{1 - \frac{G(Rt)}{G(R)}}} \le \frac{1}{\sqrt{1-t}}
  $$
for $t\in (0,1)$ and we get
  \begin{equation*}
  \begin{aligned}
    \int_{0}^{R} \frac{d s}{\sqrt{G(R) - G(s)}}
    &= R \int_{0}^{1} \frac{d t}{\sqrt{G(R) - G(Rt)}}
    = \frac{R}{\sqrt{G(R)}} \int_{0}^{1} \frac{d t}{\sqrt{1 - \frac{G(Rt)}{G(R)}}}\\
    &\le \frac{R}{\sqrt{G(R)}} \int_{0}^{1}  \frac{d t}{\sqrt{1-t}}
    = \frac{2 R}{\sqrt{G(R)}}.
  \end{aligned}
  \end{equation*}
\end{proof}

\noindent
\emph{Proof of Theorem~\ref{main_1d}:} Let $u$ be an arbitrary but fixed non-negative solution of (\ref{basic}). We denote $M:=\|u\|_{\infty}$. Since our problem is symmetrical w.r.t axis reflection, we can assume
that $M= u(x_{0})$ with $x_{0} \le 0$.
Both $R := u(x_{0} + h)$ and $u(x_{0} +2 h)$ are well-defined
due to $L \ge 4h$ and from Lemma~\ref{prelim}(a) we obtain
  \begin{equation*}
    \frac{- M + 2R + 0}{h^{2}} \ge \frac{- u(x_{0}) + 2 u(x_{0}+h) - u(x_{0} + 2 h)}{h^{2}}
    = f(x_0+h,R) \ge \lambda_{0} R - A \ge -A
  \end{equation*}
  so that
  $$
    M \le 2 R + A h^{2} \le 2 R + A \frac{L^{2}}{16}.
  $$
It is therefore sufficient to find a bound for $R$ and without loss of generality we assume $R > K$. Since $0 < K<R\leq M$, there exists $x_{1} \in [x_{0}, L-h]_{h}$ such that
$u \ge K$ on $[x_{0},x_{1}]_{h}$ and $u(x_{1} + h) < K$.
Since $R>K$, we have $x_{1} \neq x_{0}$, i.e.
$[x_{0} + h,x_{1}] \neq \emptyset$.
Using (i) and (ii) of Theorem~\ref{main_1d} we see that as long as $x \in [x_{0},x_{1}]_{h}$
  \begin{equation*}
    D^{+}_{h} D^{-}_{h} u(x) = -f(x,u(x)) \le - g(u(x)) < 0
  \end{equation*}
which means that $D^{-}_{h} u(x+h)<  D^{-}_{h} u(x)$ for $x \in [x_{0},x_{1}]_{h}$.
It follows inductively that
  \begin{equation*}
    D^{+}_{h} u(x) = D^{-}_{h} u(x+h) < D^{-}_{h} u(x)
    < \ldots < D^{-}_{h} u(x_{0} + h) = D^{+}_h u(x_{0}) = \frac{R - M}{h} \le 0
  \end{equation*}
and hence $u(x+h) < u(x)$ as long as $x \in [x_{0}+h,x_{1}]_{h}$.

\medskip

We now want to derive an upper bound for the forward difference of $u$
at $x_{1}$. For $x \in [x_{0}+h,x_{1}]_{h}$ we have on one hand, using
(ii) of Theorem~\ref{main_1d}
  \begin{equation*}
  \begin{aligned}
    - D^{+}_{h} \left((D^{-}_{h} u(x))^{2} \right) &= - D^{+}_{h} (D^{-}_{h}u(x) \cdot D^{-}_{h}u(x)) \\
    &=- D^{-}_{h}u(x+h) \cdot D^{+}_{h} D^{-}_{h} u(x) - D^{-}_{h}u(x) \cdot D^{+}_{h}D^{-}_{h} u(x)\\
    &=\underbrace{-D^{+}_{h} D^{-}_{h} u(x)}_{\ge g(u(x)) > 0}
    (\underbrace{D^{-}_{h} u(x+h)}_{< 0} + \underbrace{D^{-}_{h}u(x)}_{\le 0})\\
    &\le g(u(x)) D^{-}_{h} u(x+h) = g(u(x)) D^{+}_{h} u(x)
  \end{aligned}
  \end{equation*}
and on the other hand with some $\bar{u} \in [u(x+h),u(x)]$
  \begin{equation*}
    D_{h}^{+}G(u(x)) =\frac{1}{h} \int_{u(x)}^{u(x+h)} g(t) d t= g(\bar{u}) \frac{u(x+h)-u(x)}{h}
    \ge g(u(x)) D^{+}_{h} u(x).
  \end{equation*}
Together, we obtain
  \begin{equation*}
    - D^{+}_{h} \left( (D^{-}_{h} u(x))^{2} \right) \le D_{h}^{+}G(u(x))
  \end{equation*}
on $x \in [x_{0}+h,x_{1}]_{h}$ and summing up over $[x_{0}+h,x]_{h}$, $x \le x_{1}$
we also get
  \begin{equation*}
  \begin{aligned}
    - \left( D^{-}_{h} u(x + h) \right)^{2}
    &\le
    - \left( D^{-}_{h} u(x + h) \right)^{2}
    + \left( D^{-}_{h} u(x_{0}+h) \right)^{2}\\
    &= - \sum_{z \in [x_{0}+h,x]_{h} } D^{+}_{h}
         \left[ \left( D^{-}_{h} u(z) \right)^{2} \right] h \\
    &\le \sum_{z \in [x_{0}+h,x]_{h} } D^{+}_{h} G(u(z)) h\\
    &= G(u(x+h)) - G(u(x_{0}+h)).
  \end{aligned}
  \end{equation*}
  This shows that
$$
(\underbrace{D^{+}_{h} u(x)}_{< 0})^{2} \ge \underbrace{G(R) - G(u(x + h))}_{> 0}
$$
and finally for $x \in [x_{0}+h,x_{1}]_{h}$
  \begin{equation}\label{eq:1D_UPPER_ESTIMATE}
    D_{h}^{+} u(x) \le - \sqrt{G(R) - G(u(x+h))} < 0.
  \end{equation}
In particular, this inequality holds for $x=x_1$.

\medskip

We now want to show that $[x_{0},x_{1}]$ shrinks as $R$ goes to infinity.
More precisely, we want to show that
  \begin{equation*}
    x_{1} - x_{0} \le \frac{2R}{\sqrt{G(R)}}.
  \end{equation*}
Consider
 $$
    \kappa(t) := \int_{t}^{R} \frac{d t}{\sqrt{G(R) - G(s)}}  d s, \quad t \in [0,R]
 $$
 with $\kappa$ being well-defined due to (\ref{eq:1D_AUX}).
 Since $G(t)$ is increasing on $[0,R)$, we see that the function
 $\kappa'(t)= -\frac{1}{\sqrt{G(R) - G(t)}}$ is negative and decreasing in $t\in [0,R)$.
 Gathering our results we see that for $x \in [x_{0}+h, x_{1}]_{h}$
  \begin{equation*}
  \begin{aligned}
    D^{+}_{h} \kappa(u(x)) &= \frac{1}{h} \int_{u(x)}^{u(x+h)} \kappa'(t) d t
    = \frac{1}{h} \int_{u(x+h)}^{u(x)} -\kappa'(t) d t\\
    &\ge - \kappa'(u(x+h)) \cdot (- D^{+}_{h} u(x))
     = \frac{-1}{\sqrt{G(R) - G(u(x+h))}} D^{+}_{h} u(x)
  \end{aligned}
  \end{equation*}
and by using (\ref{eq:1D_UPPER_ESTIMATE}) this results in
  \begin{equation*}
  \begin{aligned}
    D_{h}^{+} \kappa(u(x)) \ge 1 \mbox{ for } x \in [x_{0}+h, x_{1}]_{h}.
  \end{aligned}
  \end{equation*}
Using this and (\ref{eq:1D_AUX}) we obtain
  \begin{equation*}
  \begin{aligned}
    x_{1} - x_{0} &\le \sum_{x \in [x_{0} + h, x_{1}]_{h}} D_{h}^{+} \kappa(u(x)) h
    =  \kappa(u(x_{1} + h )) - \kappa(R)\\
    &= \int_{u(x_{1} + h)}^{R} \frac{1}{\sqrt{G(R) - G(s)}}  d s \le \int_{0}^{R} \frac{1}{\sqrt{G(R) - G(s)}}  d s
    \le \frac{2R}{\sqrt{G(R)}}.
  \end{aligned}
  \end{equation*}
Because of assumption (ii) of Theorem~\ref{main_1d} we can define
  \begin{equation*}
    R_{1} := \min \left\{ r > K \mid
                                \frac{2 \rho}{\sqrt{G(\rho)}} \le \frac{L}{2}, \quad
                                \forall \rho \ge r \right\}.
  \end{equation*}
Assuming now w.l.o.g. that $R \ge R_{1}$, we have
  \begin{equation*}
   x_1 \leq x_{1} - x_{0} \le \frac{L}{2}, \quad \mbox{ i.e. } \quad L - x_{1} \ge \frac{L}{2}.
  \end{equation*}
Let \map{v}{[x_{1},L]}{\R} be the solution of
  $$
    -D^{+}_{h} D^{-}_{h} v(x) = \mu v(x) - A \;\mbox{ for }\;x \in (x_{1},L)_{h}, \qquad v(x_{1}) = u(x_{1}), \quad v(L) = 0
  $$
  with $\mu=\frac{4}{h^2}\sin^2\left(\frac{\pi h}{4(L-x_1)}\right)$. The solution and its properties are given in Lemma~\ref{poisson}. Since $u(x) \ge 0$ and $\mu \leq \pi^4/(4(L-x_1)^2)\leq (\pi/L)^2=\lambda_0$ we see that $u$ satisfies
  $$
    -D^{+}_{h} D^{-}_{h} u(x) = f(x,u(x)) \ge  \lambda_{0} u(x) - A \ge  \mu u(x) - A
                                 \;\mbox{ for }\; x \in (x_{1},L)_{h}.
  $$
Since $\mu$ is smaller than the first Dirichlet eigenvalue of the one-dimensional Laplacian on the interval $[x_1,L]$ we can apply the comparison principle of Lemma \ref{lm:DISC_ELL_COMP} for $w:=v - u$ and get $v(x) \le u(x)$ on $[x_{1},L]_{h}$
and due to $v(x_{1})=u(x_{1})$ also $D^{+}_{h}v(x_{1}) \le D^{+}_{h} u(x_{1})$.

\medskip

Using (\ref{eq:1D_UPPER_ESTIMATE}) and
$u(x_{1}) \le R$ we get
  \begin{align*}
  \sqrt{G(R) - G(K)} & \le \sqrt{G(R) - G(u(x_{1}+h))}\\
    &\le -D_h^+ u(x_1) \\
    & \leq -D_h^+ v(x_1) \\
    & \leq \left(u(x_1)+(L-x_1)^2A\right)\frac{\pi}{2(L-x_1)} \\
    & \leq R \frac{\pi}{L} + \frac{A\pi L}{2}
  \end{align*}
implying
$$
    \frac{G(R)}{R^{2}} \le \frac{G(K)}{R^{2}}
     + \left( \frac{\pi}{L} +  \frac{A \pi L}{2R} \right)^{2} \le  \frac{G(K)}{K^{2}}
     + \left( \frac{\pi}{L} +  \frac{A \pi L}{2 K} \right)^{2}
$$
Since $G(t)/t^{2} \to \infty$ as $t \to \infty$, we have the following bound
  \begin{equation*}
    R \le \min \left\{ r >K \mid
    \frac{G(\rho )}{\rho^{2}} > \frac{G(K)}{K^{2}}
     + \left( \frac{\pi}{L} +  \frac{A \pi L}{2 K} \right)^{2}
     \quad \forall \rho > r\right\}.
  \end{equation*}
\qed

\section{Higher dimensional case}

On the level of linear equations in higher dimensions the basic step for $L^\infty$
a priori bounds is done via the Moser iteration scheme (see Chapter 8.5 in \cite{GT}).
It turns out that this method also works in discrete settings.

\begin{theorem}[$L^\infty$-bounds for linear equations]
Let $\Omega\subset \R^n$ be a bounded $n$-dimensional box and let $u\in W_0^{1,2}(\overline{\Omega}_h)$ satisfy
\begin{equation}
-\Delta_h u = a(x)u + b(x) \mbox{ in } \Omega_h, \quad u=0 \mbox{ on }
\partial\Omega_h.
\label{sub}
\end{equation}
Then for $q>\frac{n}{2}$
\begin{equation}
\|u\|_\infty \leq (2K)^{q'}\|u\|_{L^2}+ 2K\|b\|_\infty(1+ |\Omega|),
\label{infinity}
\end{equation}
with
\begin{align*}
K &=
\max \left\{\frac{n}{(n-2)q'}, C_S(n)\sqrt{2}\bigl\||a|+1\bigr\|_{L^q}^{1/2}\right\}
^{\left(\frac{n(q-1)}{2q-n}\right)^2}
 & \mbox{ if } n \geq 3,\\
K & =
\max \left\{ \frac{\hat{n}}{(\hat{n}-2) q'},
\frac{4 \hat{n}}{(\hat{n}-2)}  C_S(2)(1+C_P(\Omega))\sqrt{2}\bigl\||a|+1\bigr\|_{L^q}^{1/2}
\right\}^{\left(\frac{{\hat n}(q-1)}{2q-\hat n}\right)^2} & \mbox{ if } n =2,
\end{align*}
where $C_P(\Omega)$ is the Poincar\'{e} constant of $\Omega_h$, $C_S(n)$ are the
respective Sobolev constants and in the case $n=2$ the value $\hat n$ can be
chosen arbitrary such that $q>\frac{\hat n}{2}>1$.
\label{moser}
\end{theorem}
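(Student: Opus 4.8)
The plan is to carry out Moser's iteration in the discrete setting, testing the weak form of \eqref{sub} against powers of $u$ and using the discrete Sobolev inequality (Theorem~\ref{sobolev_embedd} for $n\ge3$, Theorem~\ref{sobolev_embedd_2d} together with \eqref{power-sobolev_2d} for $n=2$) to boost integrability. First I would reduce to the homogeneous-looking situation: write $v:=|u|+\|b\|_\infty(1+|\Omega|)$ or a similar shift so that the inhomogeneity $b$ is absorbed; concretely, for $\beta\ge 1$ use the test function $\phi=v^{2\beta-1}$ (which has compact support since $u\in W_0^{1,2}(\overline\Omega_h)$ and $\Omega_h$ is bounded), insert it into Lemma~\ref{lm:DISC_WEAK_FORM}, and estimate the left-hand side $\sum_i\sum_x D_i^+u\,D_i^+\phi\,\h$ from below by $c\beta^{-1}\|v^\beta\|_D^2$ using the discrete product rule \eqref{for} and a convexity/telescoping argument (this is the discrete analogue of $\nabla u\cdot\nabla(u^{2\beta-1})\gtrsim \beta^{-1}|\nabla u^\beta|^2$). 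On the right-hand side, $a(x)u+b(x)$ tested against $v^{2\beta-1}$ is controlled by $\|\,|a|+1\,\|_{L^q}\|v^\beta\|_{L^{2q'}}^2$ via Hölder, plus a harmless $b$-term that the shift has made subcritical.

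The core of the iteration is then: combining the lower bound on $\|v^\beta\|_D^2$ with Sobolev, $\|v^\beta\|_{L^{\chi\cdot 2}}^2\le C(\beta)\|v^\beta\|_{L^{2q'}}^2$ where $\chi=\frac{n}{n-2}\cdot\frac{1}{q'}>1$ for $q>\frac n2$ (in $n=2$ one replaces $n$ by an auxiliary $\hat n$ with $q>\frac{\hat n}{2}>1$, using \eqref{power-sobolev_2d} to get a genuine $L^p$-to-$L^{p'}$ gain with a constant growing only polynomially in the exponent). Writing $\gamma_0=2q'$ and $\gamma_{k+1}=\chi\gamma_k$, iterating gives $\|v\|_{L^{\gamma_{k+1}}}\le\prod_{j\le k}C(\beta_j)^{1/\beta_j}\,\|v\|_{L^{\gamma_0}}$; since $\beta_j=\gamma_j/2\to\infty$ geometrically and $C(\beta_j)$ grows at worst like a power of $\beta_j$, the infinite product converges, and $\gamma_k\to\infty$ yields $\|v\|_\infty\le K'\|v\|_{L^{2q'}}$. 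A final application of Hölder bounds $\|v\|_{L^{2q'}}$ by $\|v\|_{L^2}$ times a power of $|\Omega|$ (or one starts the iteration directly at $\gamma_0=2$, which is cleaner), and unwinding the shift $v\mapsto u$ produces exactly the stated form \eqref{infinity} with the constant $K$ as displayed. The bookkeeping of the constants — tracking how $C_S(n)$, $C_P(\Omega)$, the factor $\beta$ from the test function, and the exponents $q'$ and $\frac{n}{n-2}$ enter — is what produces the somewhat formidable closed-form expression for $K$; one must be careful that the constant from step $j$ is $\bigl(\text{const}\cdot\beta_j\bigr)^{1/\beta_j}$ and that $\sum_j \frac{\log\beta_j}{\beta_j}<\infty$, whose value is reabsorbed into the exponent $\left(\frac{n(q-1)}{2q-n}\right)^2$.

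The main obstacle I expect is \emph{not} the iteration structure, which is standard, but rather establishing the discrete Caccioppoli-type estimate $\sum_i\sum_x D_i^+u(x)\,D_i^+(v^{2\beta-1})(x)\,\h\ge \frac{c}{\beta}\|v^\beta\|_D^2$ with a $\beta$-dependence sharp enough that the product over $j$ converges. In the continuum this is an algebraic identity; on the lattice the product rule \eqref{for} produces cross terms evaluated at shifted points $x+\delta_i$, so one needs an elementary but careful inequality of the form $(b-a)(b^{2\beta-1}-a^{2\beta-1})\ge \frac{c}{\beta}(b^\beta-a^\beta)^2$ for $a,b\ge 0$ (with the shift built in), valid uniformly in $h$. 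Getting the constant $c$ and the power of $\beta$ right here is the delicate point; everything downstream is then routine summation and Hölder.
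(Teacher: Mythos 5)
Your overall strategy — Moser iteration with test functions $\phi=u^{2\beta-1}$, the discrete Sobolev gain $\chi=\frac{n}{(n-2)q'}>1$, and the algebraic inequality $(b-a)(b^{2\beta-1}-a^{2\beta-1})\ge\frac{c}{\beta}(b^\beta-a^\beta)^2$ (which is precisely the paper's inequality \eqref{simp} with $\beta=s+1$, $c=1$) — matches the paper's proof, and your anticipation of where the difficulty lies is accurate. However, there are two genuine gaps in the way you propose to execute it. First, the shift $v:=|u|+\|b\|_\infty(1+|\Omega|)$ does \emph{not} give a test function $v^{2\beta-1}$ with compact support: since the shift is a positive constant, $v^{2\beta-1}$ equals a positive constant on $\partial\Omega_h$ and hence cannot be plugged into Lemma~\ref{lm:DISC_WEAK_FORM}. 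The paper avoids this entirely by testing with $\phi=u_+^{2s+1}$ (which does vanish on $\partial\Omega_h$), handling the inhomogeneity $b$ via a Young-type inequality $|b|u_+^{2s+1}\le u_+^{2s+2}+|b|^{2s+2}$ inside each iteration step, and then controlling the resulting $\|b\|_{L^{2q't_k}}$-terms across iterations by introducing a dominating comparison sequence $\tilde m_k$ with $\tilde m_0=\|u_+\|_{L^{2q'}}+\|b\|_\infty(1+|\Omega|)$ (note $\tilde m_k\ge\|b\|_{L^s}$ for all $s\ge 1$). You would need either this device, or a corrected test function such as $v^{2\beta-1}-c^{2\beta-1}$, together with the extra algebra that entails.

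Second, your final step ``A final application of H\"older bounds $\|v\|_{L^{2q'}}$ by $\|v\|_{L^2}$ times a power of $|\Omega|$'' is backwards: since $2q'>2$, H\"older on a bounded domain gives $\|v\|_{L^2}\lesssim\|v\|_{L^{2q'}}$, not the reverse. Starting the iteration at $\gamma_0=2$ also does not work as claimed, because the H\"older split against $\||a|+1\|_{L^q}$ forces the initial exponent to be $2q'$, not $2$. The paper's way out is the interpolation inequality $\|u_+\|_{L^{2q'}}\le\epsilon\|u_+\|_{L^\infty}+\epsilon^{-\mu}\|u_+\|_{L^2}$ with $\epsilon=1/(2K)$, after which the $\|u_+\|_\infty$-term is absorbed into the left-hand side of \eqref{almost_done} to produce the stated estimate \eqref{infinity}. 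This absorption step is essential and cannot be replaced by a bare H\"older inequality.
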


\begin{proof}
The proof relies on the following basic inequality
  $1\leq \frac{1-\tau^q}{1-\tau}\leq q$ for every $q\geq 1$ and
  $\tau\in [0,1)$. A simple consequence of this inequality is
\begin{equation}
\frac{(y^{2s+1}-z^{2s+1})(y-z)}{(y^{s+1}-z^{s+1})^2} \geq \frac{1}{s+1} \geq
\frac{1}{(s+1)^2} \mbox{ for all } s,y,z \geq 0.
\label{simp}
\end{equation}
Set $\phi:=u_+^{2s+1}\ge 0$ with $s \ge 0$. Since $\phi$
has compact support in $\overline{\Omega}_{h}$ the weak formulation of (\ref{sub}) implies
$$
\sum_{i=1}^n \sum_{x\in \overline{\Omega}_h \setminus \partial_i^{+} \Omega_h}
D_i^+ u(x) D_i^+ \phi(x) \h =\sum_{x\in\Omega_h} \Bigl(a(x) u(x)+ b(x)\Bigr)\phi(x)\h.
$$
Let $i \in \{ 1, \dots, n\}$ and $x \in \overline{\Omega}_h \setminus \partial_i^{+} \Omega_h$
be arbitrary but fixed, then
  \begin{equation*}
    D_i^{+} u(x) D_i^{+} \phi(x) \ge D_i^{+} u_{+}(x) D_i^{+} \phi(x).
  \end{equation*}
To see this, consider the following four possibilities:
if $u(x),u(x+\delta_{i}) \ge 0$ then $D_i^{+} u(x) = D_i^{+} u_{+}(x)$,
if $u(x),u(x+\delta_{i}) \le  0$ then $D_i^{+} \phi(x) = 0$,
if $u(x) \le 0 \le u(x+\delta_{i})$ then $D^{+}_{i} u(x) \ge D^{+}_{i} u_{+}(x)$
and $D_i^{+} \phi(x) \ge 0$, and finally if $u(x) \ge  0 \ge u(x+\delta_{i})$ then
$D^{+}_{i} u(x) \le D^{+}_{i} u_{+}(x)$ and $D_i^{+} \phi(x) \le 0$.
In all four cases the inequality follows. Moreover, from (\ref{simp}) we have
$$
\frac{1}{(s+1)^2} \big(D_i^{+} u_+^{s+1}(x)\big)^2
\le D_i^{+} u_+(x) D_i^{+} u_+^{2s+1}(x)
= D_i^{+} u_{+}(x) D_i^{+}\phi(x).
$$
This yields
\begin{align}
\frac{1}{(s+1)^2} \|u_+^{s+1}\|_D^2
&\leq \sum_{x\in\Omega_h} \Bigl(a(x) u(x)+b(x)\Bigr) \phi(x) \h = \sum_{x\in\Omega_h} \Bigl(a(x) u_{+}(x)+b(x)\Bigr) u_+(x)^{2s+1} \h  \nonumber \\
&\leq \sum_{x\in\Omega_h} \Bigl(|a(x)| u_{+}(x) + |b(x)|\Bigr) u_+(x)^{2s+1}\h \nonumber \\
& \leq \sum_{x\in\Omega_h} \Bigl( (|a(x)|+1) u_{+}(x)^{2s+2} + |b(x)|^{2s+2}\Bigr)\h   \label{iter_0}\\
& \leq  \bigl\||a|+1\bigr\|_{L^q}(\|u_+^{2s+2}\|_{L^{q'}}+ \bigl\||b|^{2s+2}\bigr\|_{L^{q'}})\nonumber 
\end{align}
For $n\geq 3$ we may use Sobolev's inequality with constant $C_S(n)$. If we define the constant $C=C_S(n)\bigl\||a|+1\bigr\|_{L^q}^{1/2}$ then \eqref{iter_0} leads to
\begin{equation}
\|u_+\|_{L^{\frac{2n}{n-2}(s+1)}} \leq
\Big((s+1)C\Big)^{1/(s+1)}  \left(\|u_+\|_{L^{2q'(s+1)}}^{2s+2}+\|b\|_{L^{2q'(s+1)}}^{2s+2}\right)^{1/(2s+2)}.
\label{iter_1}
\end{equation}
In the case $n=2$ we will obtain (\ref{iter_1}) with a different
constant $C$ and with $n$ replaced by $\hat{n}$
as explained at the end of the proof.
Next we set $s+1=t$ and define recursively
$$
t_k\frac{2n}{n-2} = 2q' t_{k+1}, \quad t_0=1,
$$
i.e., $t_k=l^k$ with $l=\frac{n}{(n-2)q'} >1$ for $q > \frac{n}{2}$.
Furthermore, set
$m_k=\|u_+\|_{L^{2q't_k}}$. Then (\ref{iter_1}) is equivalent to
\begin{equation}
m_{k+1}\leq (t_k C)^{1/t_k}(m_k^{2t_k}+\|b\|_{L^{2q't_k}}^{2t_k})^{1/(2t_k)}, \quad m_0=\|u_+\|_{L^{2q'}}.
\label{rec}
\end{equation}
We compare the sequence $(m_k)_{k\in \N_0}$ with the sequence $(\tilde m_k)_{k\in \N_0}$ which we suppose to satisfy
$$
\tilde m_{k+1} = (t_k \sqrt{2}C)^{1/t_k}\tilde m_k, \quad \tilde m_0=\|u_+\|_{L^{2q'}}+ \|b\|_\infty (1+|\Omega|).
$$
Let us explain that $\tilde m_k \geq m_k$ for all $k\in \N_0$. It is clearly true for $k=0$. Moreover, since $\tilde m_k$ is increasing in $k$, we have 
$$
\tilde m_k \geq \tilde m_0 \geq \|b\|_\infty (1+|\Omega|) \geq \|b\|_\infty \|\Omega|^{1/s} \geq \|b\|_{L^s} \mbox{ for } s\geq 1
$$
and hence
\begin{equation}
\tilde m_{k+1}\geq (t_k C)^{1/t_k}(\tilde m_k^{2t_k}+\|b\|_{L^{2q't_k}}^{2t_k})^{1/(2t_k)}.
\label{rec_tilde}
\end{equation}
Comparing \eqref{rec_tilde} with \eqref{rec} and using $\tilde m_0\geq m_0$ we find by induction that $\tilde m_k \geq m_k$ for all $k\in \N_0$. Hence 
$$
\|u_+\|_\infty = \lim_{k\to \infty} m_k \leq \lim_{k\to \infty} \tilde m_k = \prod_{i=0}^\infty (t_i \sqrt{2}C)^{1/t_i} \tilde m_0
= \tilde m_0 \exp\left(\sum_{i=0}^\infty \frac{1}{t_i}\log(t_i\sqrt{2}C)\right).
$$
Since
$$
\sum_{i=0}^\infty \frac{1}{t_i}\log(t_i\sqrt{2}C)
= \sum_{i=0}^\infty \left(\frac{1}{l^i}\log \sqrt{2}C + \frac{i}{l^i}\log l\right)
\leq \log \tilde{C} \sum_{i=0}^\infty \frac{i+1}{l^i} = \log \tilde{C} \left( \frac{l}{l-1}\right)^2,
$$
where $\tilde{C} := \max \{\sqrt{2} C, l \}$, this implies the estimate
\begin{equation}
\| u_+\|_\infty \leq \underbrace{\tilde{C}^{\left(\frac{n(q-1)}{2q-n}\right)^2}}_{=:K} \tilde m_0 = K\bigl(\|u_+\|_{L^{2q'}}+\|b\|_\infty (1+|\Omega|)\bigr).
\label{almost_done}
\end{equation}
Adopting the proof from Gilbarg-Trudinger~\cite{GT}, Chapter 7.1
we get the discrete interpolation inequality $\|u_+\|_{L^\beta} \leq
\epsilon\|u_+\|_{L^\gamma} + \epsilon^{-\mu} \|u_+\|_{L^\alpha}$ for
$1\leq \alpha\leq \beta\leq \gamma\leq\infty$ and $\mu =
\Big(\frac{1}{\alpha}-\frac{1}{\beta}\Big)/\Big(\frac{1}{\beta}-\frac{1}{\gamma}\Big)$.
If we apply this to (\ref{almost_done}) with $\alpha=2$,
$\beta=2q'$, $\gamma=\infty$ and $\epsilon = 1/(2K)$ then we obtain
 $$
    \|u_+\|_\infty \leq \frac{1}{2} \|u_+\|_\infty + (2K)^{q'-1}K \|u_+\|_{L^2}+K\|b\|_\infty (1+|\Omega|),
$$
i.e.,
$$
    \|u_{+}\|_{\infty}  \le (2K)^{q'} \|u\|_{L^2}+2K\|b\|_\infty (1+|\Omega|),
$$
which implies the desired inequality (\ref{infinity}) for $u_{+}$.
The inequality for $u_{-}$ is derived in a similar way by replacing $u$
with $-u$ and $b$ with $-b$.

\medskip

Now we come back to the case $n=2$. By Poincar\'{e}'s inequality
we have the estimate $\|\cdot\|_{W^{1,2}}^2 \leq (1+C_P(\Omega)^2)\|\cdot\|_D^2$. Thus from
(\ref{iter_0}) we get
$$
\|u_+^{s+1}\|_{W^{1,2}}^2 \leq (1+C_P(\Omega))^2(s+1)^2\bigl\||a|+1\bigr\|_{L^q}
\left(\|u_+^{2s+2}\|_{L^{q'}}+\bigl\||b|^{2s+2}\bigr\|_{L^{q'}}\right).
$$
Using the 2-dimensional Sobolev inequality from Theorem \ref{sobolev_embedd_2d}
and choosing $\hat{n}$ such that $q>\frac{\hat n}{2}>1$,
i.e. $\frac{\hat{n}}{(\hat{n} -2) q'} > 1$ we get
$$
\|{\bar u}^{s+1}\|_{L^{\frac{2\hat n}{\hat n-2}}}^2
\Big[2C_S(2)\frac{2\hat n}{\hat n-2}\Big]^{-2} \leq (1+C_P(\Omega))^2(s+1)^2\bigl\||a|+1\bigr\|_{L^q}
\left(\|u_+^{2s+2}\|_{L^{q'}}+\bigl\||b|^{2s+2}\bigr\|_{L^{q'}}\right).
$$
Thus we may proceed with (\ref{iter_1}) where the constant $C$ now
takes the value
$$
C=\frac{4\hat n}{\hat n-2}C_S(2)(1+C_P(\Omega))\bigl\||a|+1\bigr\|_{L^q}^{1/2}.
$$
\end{proof}

Just like in the theory for linear elliptic continuous boundary value problems the above $L^\infty$ bound for linear difference equations can be transferred to subcritical nonlinear difference equations  as shown next.

\begin{theorem}[$L^\infty$-bounds for nonlinear equations]
Let $\Omega\subset \R^n$ be a bounded $n$-dimensional box. Assume
  $1\leq p<\frac{n+2}{n-2}$ for $n \ge 3$ and
  $1\leq p< \infty$ for $n = 2$. For $n\geq 3$ let
  $q=\frac{2n}{(n-2)(p-1)}$ and for $n=2$ choose
  $q>\max\{1,\frac{2}{p-1}\}$. Let $u\in W_0^{1,2}(\overline{\Omega}_h)$
  satisfy
\begin{equation}
-\Delta_h u = f(x,u) \mbox{ in } \Omega_h, \quad u=0 \mbox{ on }
\partial\Omega_h,
\label{eq_non}
\end{equation}
where $f:\Omega_h\times \R\to \R$ satisfies $|f(x,s)|\leq C_2 |s|^p + C_3$ for all $s\in \R$, $x\in\Omega_h$ and some constant $C_2, C_3>0$. Then
\begin{equation}
\|u\|_\infty \leq (2K_{u})^{q'}C_P(\Omega)\|u\|_D+ 2K_u C_3(1+|\Omega|),
\label{infinity_non}
\end{equation}
with
$$
K_{u} =\max \left\{\frac{n}{(n-2)q'}, C_S(n)\sqrt{2}\sqrt{C_2 C_S(n)^{p-1}\|u\|_D^{p-1}+|\Omega|^\frac{1}{q}}   \right\}^{\left(\frac{n(q-1)}{2q-n}\right)^2}
$$
if $n \geq 3$
and
$$
K_{u}  = \max\left\{\frac{\hat{n}}{(\hat{n}-2)q'}, \frac{4{\hat n}}{(\hat n-2)} \bigl(2C_S(2)(1+C_P(\Omega))q(p-1)\bigr)^\frac{p+1}{2}\sqrt{C_2\|u\|_D^{p-1}+|\Omega|^\frac{1}{q}}
\right\}^{\left(\frac{{\hat n}(q-1)}{2q-\hat n}\right)^2}
$$
if $n =2$, where $C_P(\Omega)$ is the Poincar\'{e} constant of $\Omega_h$, $C_S(n)$ are the
respective Sobolev constants. In the case $n=2$
the value $\hat n$ can be chosen arbitrary such that $q>\frac{\hat n}{2}>1$.
\label{moser_nonlinear}
\end{theorem}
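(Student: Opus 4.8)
The plan is to reduce the nonlinear problem \eqref{eq_non} to the linear Moser iteration of Theorem~\ref{moser} via the classical device of absorbing the nonlinearity into a solution-dependent zeroth order coefficient. It suffices to bound $u_+$: the bound for $u_-$ then follows by applying the same reasoning to $-u$, which solves $-\Delta_h(-u)=\tilde f(x,-u)$ with $\tilde f(x,s):=-f(x,-s)$ still satisfying $|\tilde f(x,s)|\le C_2|s|^p+C_3$.

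First I would set $a(x):=C_2|u(x)|^{p-1}\ge 0$ and observe that on $\supp u_+$ one has $u(x)=u_+(x)>0$, hence $f(x,u(x))=f(x,u_+(x))\le C_2u_+(x)^p+C_3=a(x)u_+(x)+C_3$, while off $\supp u_+$ the test function $u_+^{2s+1}$ vanishes. Testing \eqref{eq_non} with $\phi=u_+^{2s+1}$, using the pointwise comparison $D_i^+u\,D_i^+\phi\ge D_i^+u_+\,D_i^+\phi$ and the elementary inequality \eqref{simp} exactly as in the proof of Theorem~\ref{moser}, the chain of estimates leading up to and including \eqref{iter_0} holds verbatim with this $a$ and with the constant inhomogeneity $b\equiv C_3$ (so $\|b\|_\infty=C_3$, and a constant $b$ causes no trouble in the step yielding \eqref{rec_tilde}). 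From that point on the iteration --- the recursion for $m_k=\|u_+\|_{L^{2q't_k}}$, its domination by $\tilde m_k$, the summation of the resulting series, and the final Gilbarg--Trudinger interpolation step --- is unchanged, and it produces
\begin{equation*}
\|u_+\|_\infty\le (2K_u)^{q'}\|u\|_{L^2}+2K_u\,C_3(1+|\Omega|),
\end{equation*}
where $K_u$ is precisely the constant $K$ of Theorem~\ref{moser} evaluated at this particular coefficient $a$.

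It then remains to estimate $\bigl\||a|+1\bigr\|_{L^q}$ in a mesh-independent way. Since $\Omega$ is bounded, $u$ has compact support; extending it by zero to $\R^n_h$ changes neither $\|u\|_D$ nor any $\|u\|_{L^r}$, so the discrete Sobolev inequalities of Section~3 apply. By the triangle inequality $\bigl\||a|+1\bigr\|_{L^q}\le\|a\|_{L^q}+|\Omega|^{1/q}$, and by the very choice of $q$ one has $\|a\|_{L^q}=C_2\|u\|_{L^{(p-1)q}}^{p-1}$. For $n\ge 3$ the exponent is $(p-1)q=\frac{2n}{n-2}$, so Theorem~\ref{sobolev_embedd} gives $\|a\|_{L^q}\le C_2C_S(n)^{p-1}\|u\|_D^{p-1}$; inserting $\bigl\||a|+1\bigr\|_{L^q}\le C_2C_S(n)^{p-1}\|u\|_D^{p-1}+|\Omega|^{1/q}$ into the formula for $K$ in Theorem~\ref{moser} yields exactly the stated $K_u$. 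For $n=2$ one runs the same scheme with any admissible $\hat n$ (so that $q>\tfrac{\hat n}{2}>1$); there $(p-1)q\ge 2$, so one uses the power form \eqref{power-sobolev_2d} of the two-dimensional Sobolev inequality together with $\|\cdot\|_{W^{1,2}}^2\le(1+C_P(\Omega)^2)\|\cdot\|_D^2$, the only extra bookkeeping being the $p$- and $q$-dependent constant in \eqref{power-sobolev_2d}, which accounts for the heavier expression for $K_u$ in that case. Finally, Poincar\'e's inequality (Theorem~\ref{poincare}), $\|u\|_{L^2}\le C_P(\Omega)\|u\|_D$, converts the displayed bound into \eqref{infinity_non}.

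The only point needing real care is conceptual rather than computational: because $a$ depends on the unknown $u$, the constant $K_u$ is solution-dependent, so \eqref{infinity_non} is not yet an a priori bound --- it becomes one only once $\|u\|_D$ is controlled independently, which is done in the proof of Theorem~\ref{main} by testing with the first eigenfunction and invoking the Hardy--Sobolev inequality. One should also verify that the assumed range $p<\frac{n+2}{n-2}$ (resp. $p<\infty$ for $n=2$) is exactly what makes $q>\frac n2$ (resp.\ what guarantees an admissible $\hat n$), hence the geometric ratio $l=\frac{n}{(n-2)q'}>1$ and the convergence of the iteration; this is a short check. The rest of the work is routine constant-tracking, heaviest in the case $n=2$.
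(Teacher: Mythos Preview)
Your proposal is correct and follows the same overall strategy as the paper: absorb the nonlinearity into a solution-dependent zeroth-order coefficient, feed this into the linear Moser bound of Theorem~\ref{moser}, and then control $\bigl\||a|+1\bigr\|_{L^q}$ via the Sobolev inequalities with the exponent $q$ chosen so that $(p-1)q=\tfrac{2n}{n-2}$ (resp.\ $(p-1)q\ge 2$).

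The one difference worth recording is how the reduction to the linear problem is set up. You take $a(x)=C_2|u(x)|^{p-1}$ and $b\equiv C_3$, which yields only the one-sided bound $f(x,u)\le a(x)u_+ + C_3$ on $\supp u_+$; you then (correctly) observe that the entire chain \eqref{iter_0}--\eqref{almost_done} in the proof of Theorem~\ref{moser} needs only this inequality, so the iteration still runs. The paper instead introduces the normalizing factor
\[
\tilde a(x):=\frac{f(x,u(x))}{C_2|u(x)|^p+C_3},\qquad a(x):=\tilde a(x)\,C_2|u(x)|^{p-1}\sign u(x),\qquad b(x):=\tilde a(x)\,C_3,
\]
so that $-\Delta_h u = a(x)u + b(x)$ holds as an \emph{exact} equation and Theorem~\ref{moser} applies as a black box, without reopening its proof. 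Since $|\tilde a|\le 1$, this gives $|a(x)|\le C_2|u(x)|^{p-1}$ and $\|b\|_\infty\le C_3$, i.e.\ the very same bounds you use. The paper's device is a little cleaner (no need to treat $u_+$ and $u_-$ separately or to revisit the iteration), but the substance and the resulting constants are identical.
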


\begin{proof} Let $u$ be a solution of \eqref{eq_non}. If we define the functions
$$
\tilde{a}(x) := \frac{f(x,u(x))}{C_2 |u(x)|^p+ C_3}, \quad
a(x) := \tilde{a}(x) C_2 |u(x)|^{p-1}\sign u(x), \quad
b(x) := \tilde{a}(x) C_3 \quad \mbox{ for } x\in \Omega_h
$$
then $u$ satisfies the linear equation
$$
-\Delta_h u = a(x)u(x)+ b(x) \quad \mbox{ in } \Omega_h, \quad u=0 \mbox{ on } \partial\Omega_h.
$$
For $n\geq 3$ we have $q(p-1)=\frac{2n}{n-2}$ and hence we find
$$
\|\tilde a\|_\infty \leq 1, \quad \||a|+1\|_{L^q} \leq C_2 \|u\|_{\frac{2n}{n-2}}^{p-1}+|\Omega|^{1/q} \leq C_2 C_S(n)^{p-1} \|u\|_D^{p-1}+|\Omega|^{1/q} \quad \mbox{ and } \quad \|b\|_\infty \leq C_3.
$$
Now the claim follows from Theorem \ref{moser}. In the case $n=2$ we
get with Theorem \ref{sobolev_embedd_2d}
\begin{align*}
\||a|+1\|_{L^q} &\leq C_2\left(2C_S(2)
q(p-1)\right)^{p-1}\|u\|_{W^{1,2}}^{p-1}+ |\Omega|^{1/q} \\ 
& \leq \bigl(\underbrace{2C_S(2)q(p-1)(1+C_P(\Omega))}_{\geq 1}\bigr)^{p-1}\left(C_2\|u\|_D^{p-1}+|\Omega|^{1/q}\right).
\end{align*}
Again the claim follows from Theorem \ref{moser}.
\end{proof}

\noindent
{\it Proof of Theorem \ref{main}:} Let $\phi_{1,h}$ be a first Dirichlet eigenfunction of the  discrete Laplacian on $\Omega_h$. According to Lemma~\ref{normalization} we may normalize $\phi_{1,h}$ such that $\sum_{x\in\Omega_h} \phi_{1,h}\h=1$ and
\begin{equation}
\phi_{1,h}(x) \geq \frac{2^n}{|\Omega|^2} \dist(x,\partial\Omega_h)^n \mbox{ in } \Omega_h.
\label{distance_estimate}
\end{equation}
Testing (\ref{basic}) with $\phi_{1,h} \ge 0$ and using the hypothesis (ii) yields
$$
\lambda_{1,h} \sum_{x\in \Omega} u\phi_{1,h}\h =
\sum_{x\in\Omega} f(x,u)\phi_{1,h}\h
\geq -C_1 + \lambda \sum_{x\in\Omega_h} u \phi_{1,h}\h.
$$
Since by assumption $\lambda>\lambda_1>\lambda_{1,h}$ we obtain
\begin{equation}
\label{bound}
\sum_{x\in\Omega_h} u\phi_{1,h}\h \leq \frac{C_1}{\lambda-\lambda_1} \mbox{ and } \sum_{x\in\Omega_h} f(x,u)\phi_{1,h}\h \leq \frac{C_1\lambda_1}{\lambda-\lambda_1}.
\end{equation}
By assumption (ii) we have $1<p<\frac{n}{n-1}$. Let us define $\tilde p =\frac{1}{2}(p+\frac{n}{n-1})$ so that $p<\tilde p < \frac{n}{n-1}$. Next we test (\ref{basic}) with the solution $u$ itself and find
\begin{align}
\sum_{i=1}^n \sum_{x\in \overline{\Omega}_h\setminus \partial_i^+\Omega_h} |D_i^+u|^2(x)\h  &= \sum_{x\in\Omega_h} f(x,u)u\h \nonumber \\
&= \sum_{x\in\Omega_h} \frac{u}{\phi_{1,h}^{1/{\tilde p}'}} f(x,u) \phi_{1,h}^{1/{\tilde p}'}\h \label{est1}\\
& \leq \Big(\sum_{x\in \Omega_h} \frac{u^{\tilde p}f(x,u)}{\phi_{1,h}^{\tilde p-1}}\h
\Big)^{1/\tilde p} \Big(\sum_{x\in\Omega_h}  f(x,u)\phi_{1,h}\h \Big)^{1/{\tilde p}'}, \nonumber
\end{align}
where ${\tilde p}'$ is the conjugate exponent to $\tilde p$. The last sum in (\ref{est1}) is bounded due to (\ref{bound}). By hypothesis (ii) one obtains
$$
\sum_{x\in \Omega_h} \frac{f(x,u)u^{\tilde p}}{\phi_{1,h}^{\tilde p-1}} \h
\leq \sum_{x\in\Omega_h} \frac{C_2u^{\tilde p+p}+C_3u^{\tilde p}}{\phi_{1,h}^{\tilde p-1}}\h.
$$
Using \eqref{distance_estimate} we obtain
\begin{equation}
\label{est2}
\sum_{x\in\Omega_h} \frac{f(x,u)u^{\tilde p}}{\phi_{1,h}^{\tilde p-1}}\h
\leq \frac{|\Omega|^{2(\tilde p-1)}}{2^{n(\tilde p-1)}}
     \sum_{x\in\Omega_h} \frac{C_2u^{\tilde p+p}+C_3u^{\tilde p}}{\dist(x,\partial\Omega_h)^{n(\tilde p-1)}}\h.
\end{equation}
Next we apply the Hardy-Sobolev inequality from Theorem \ref{th:HARDY_SOBOLEV}
with $\alpha_1 = \tilde p+p$, $\beta=n(\tilde p-1)$ and with $\alpha_2=\tilde p$,
$\beta=n(\tilde p-1)$, respectively. Since $\tilde p<\frac{n}{n-1}$ we see that $\beta=n(\tilde p-1)<\tilde p\leq \alpha_{1,2}$ in both cases.  The condition $\beta<2$ amounts to $n(\tilde p-1)<2$, i.e. $\tilde p < 1+\frac{2}{n}$, which is true since
$\tilde p < \frac{n}{n-1}\leq 1+\frac{2}{n}$ for all $n\geq 2$. Finally, we need to check the condition $\alpha_{1,2}\leq \frac{2}{n-2}(n-\beta)$ for $n>2$. It is enough to check it for $\alpha_1=p+\tilde p$, where it amounts to
$$
p+\tilde p \leq \frac{2n}{n-2}(2-\tilde p), \mbox{ i.e. } \quad p+\tilde p\left(1+\frac{2n}{n-2}\right)\leq \frac{4n}{n-2}.
$$
This inequality is true, since $p, \tilde p < \frac{n}{n-1}$. Hence the
Hardy-Sobolev inequality, (\ref{est1}) and (\ref{est2}) lead to
$$
\|u\|_D^{2\tilde p} \leq \left(\frac{C_1\lambda_1 |\Omega|^2}{(\lambda-\lambda_1)2^n}\right)^{\tilde p-1}\left(
C_2 C_{HS}(n,\alpha_1,\beta,\Omega)\|u\|_D^{p+\tilde p} + C_3C_{HS}(n,\alpha_2,\beta,\Omega)\|u\|_D^{\tilde p}\right)
$$
and therefore
\begin{equation}
\|u\|_D^{\tilde p-p} \leq \max\left\{1,\left(\frac{C_1\lambda_1 |\Omega|^2}{(\lambda-\lambda_1)2^n}\right)^{\tilde p-1}\bigl(C_2 C_{HS}(n,\alpha_1,\beta,\Omega)+ C_3C_{HS}(n,\alpha_2,\beta,\Omega)\bigr) \right\}
\label{explicit_bound_D_norm}
\end{equation}
for every non-negative solution $u$ of (\ref{basic}). Now Theorem~\ref{moser_nonlinear}
applies and shows that $\|u\|_\infty$ is uniformly bounded for every
non-negative solution $u$ of (\ref{basic}). \qed


\section{Open problems and extensions}

Let us finish our discussion with a list of open questions:
\begin{itemize}
\item[(i)] Can one extend Theorem~\ref{main} to more general domains? As a start in this direction one might consider domains which are unions of $n$-dimensional boxes, e.g. an $L$-shaped domain in the case $n=2$. The main difficulty is to find a proof for the statement 
$$
\phi_{1,h}(x) \geq C(\Omega) \dist(x,\partial\Omega_h)^n \mbox{ in } \Omega_h
$$
which appeared as \eqref{distance_estimate} in the proof of Theorem~\ref{main}. Here $\phi_{1,h}$ is the first Dirichlet eigenfunction of $-\Delta_h$ on $\Omega_h$ normalized by $\sum_{x\in \Omega_h} \phi_{1,h}(x) \h=1$.
\item[(ii)]  Can one extend Theorem~\ref{main} to solutions of a finite element version of \eqref{basic}? Here the main difficulty is to find an extension of Theorem~\ref{moser} to finite element solutions. Recall that the proof of Theorem~\ref{moser} is based on Moser's iteration scheme which uses $u_+^{2s+1}$ as a test function for values of $s$ tending to $\infty$. In the finite element setting $u_+^{2s+1}$ is not in the finite element space.
\item[(iii)] What are the optimal constants in the discrete Sobolev inequalities of Theorem~\ref{sobolev_embedd}, Theorem~\ref{sobolev_embedd_2d} and the discrete Hardy inequality of Theorem~\ref{th:HARDY_INEQUALITY}? Are these constants attained?
\item[(iv)] Can one show that for $p>n/(n-1)$ and $f(x,s)=s^p$ there is a sequence of positive finite difference solutions of \eqref{basic}, whose $L^\infty$-norm blows up as the mesh-size goes to zero? This would show that Theorem~\ref{main} is sharp with respect to the exponent. Numerical evidence for the existence of such solutions near a rectangular corner is given in \cite{hmr} in a finite element context.
\end{itemize}

\section*{Appendix}

\noindent
\emph{Proof of Theorem~ \ref{poincare}:} The operator $-\Delta_{h}$
  is a positive self-adjoint operator on the subspace $W^{1,2}_{0} (\overline{\Omega}_{h})$
  of the finite-dimensional Hilbert space $L^{2}(\overline{\Omega}_{h})$. The first eigenvalue $\lambda_{1,h}$ is described in Lemma~\ref{normalization}.
  Therefore, the optimal (smallest) value of $C_P(\Omega)$ in \eqref{eq:POINCARE_INEQ} is coming from Rayleigh's characterization of the smallest eigenvalue, i.e.,
$$
\frac{1}{C_P(\Omega)^2} \le \lambda_{1,h} = \sum_{i=1}^n \frac{4}{h_i^2}\sin^2\left(\frac{\pi h_i}{2(b_i-a_i)}\right).
$$
This is, however, an $h$-dependent quantity. Using $\sin x\geq \frac{2}{\pi}x$ for $0\leq x\leq \pi/2$ we obtain
$$
\lambda_{1,h} \geq \sum_{i=1}^n \frac{4}{(b_i-a_i)^2} \geq n^2 \frac{1}{\sum_{i=1}^n \frac{(b_i-a_i)^2}{4}}
$$
by the harmonic-arithmetic mean inequality. This implies $C_P(\Omega) \leq \frac{1}{2n} \sqrt{\sum_{i=1}^n (b_i-a_i)^2}$.
\qed

\medskip

\noindent
\emph{Proof of Theorem~\ref{sobolev_embedd}:}
We may assume that $u$ has compact support. Then for each $i=1,\ldots,n$ one
has
$$
u(x) = \sum_{k = -\infty, k\in \Z}^{-1} D_i^+ u(x+k\delta_i)h_i
$$
and hence
$$
|u(x)|^\frac{n}{n-1} \leq \prod_{i=1}^n \Big(\sum_{k_i\in \Z}
|D_i^+ u(x+k_i\delta_i)|h_i\Big)^\frac{1}{n-1}.
$$
Further summation yields
\begin{align*}
\sum_{k_1 \in \Z}|u(x+k_1\delta_1)|^\frac{n}{n-1}h_1
\leq& \Big(\sum_{k_1\in \Z}|D_1^+
u(x+k_1\delta_1)|h_1\Big)^\frac{1}{n-1} \\
& \cdot\sum_{k_1\in \Z} \prod_{i=2}^n\Big(\sum_{k_i\in \Z}
|D_i^+ u(x+k_{1} \delta_{1} + k_i\delta_i)|h_1h_i\Big)^\frac{1}{n-1}\\
\leq &\Big(\sum_{k_1\in \Z} |D_1^+
u(x+k_1\delta_1)|h_1\Big)^\frac{1}{n-1}\\
&\cdot\prod_{i=2}^n\Big( \sum_{k_1,k_i \in \Z}|D_i^+
u(x+ k_1\delta_1+k_i\delta_i)|h_1h_i\Big)^\frac{1}{n-1},
\end{align*}
where the last inequality is a consequence of H\"older's inequality
applied to the product of $n-1$-functions inside
$\sum_{k_1\in \Z}$. Summing next over $k_2\in \Z$ we
get
\begin{align*}
\sum_{k_1,k_2\in \Z}^\infty&
|u(x+k_1\delta_1+k_2\delta_2)|^\frac{n}{n-1}h_1h_2\\
\leq &\Big(\sum_{k_1,k_2\in \Z} |D_1^+
u(x+k_1\delta_1+k_2\delta_2)|h_1h_2\Big)^\frac{1}{n-1} \\
&\cdot \Big(\sum_{k_1,k_2 \in \Z}|D_2^+
u(x+k_1\delta_1+k_2\delta_2)|h_1h_2\Big)^\frac{1}{n-1}\\
&\cdot\prod_{i=3}^n\Big( \sum_{k_1,k_2,k_i \in \Z}^\infty |D_i^+
u(x+k_1\delta_1+k_2\delta_2+k_i\delta_i)|h_1h_2h_i\Big)^\frac{1}{n-1},
\end{align*}
so that after $n-2$ further steps one arrives at
$$
\sum_{x\in \R^n_{h}} |u(x)|^\frac{n}{n-1}\h \leq \prod_{i=1}^n
\Big(\sum_{x\in\R^n_h}|D_i^+ u(x)|\h\Big)^\frac{1}{n-1}.
$$
Using the inequality between the geometric and the arithmetic mean we arrive at
\begin{equation}
\label{sobolev_1}
\|u\|_{L^\frac{n}{n-1}} \leq \prod_{i=1}^n \Big(\sum_{x\in\R^n_h}|D_i^+ u(x)|
\h\Big)^\frac{1}{n}
\leq \frac{1}{n} \sum_{i=1}^n \sum_{x\in\R^n_h}|D_i^+ u(x)|\h.
\end{equation}
Next we set $u=|v|^{\gamma-1} v$ with $\gamma=\frac{2n-2}{n-2}$, use the
estimate
$$
|D_i^+ u(x)| \leq \gamma (|v(x)|^{\gamma-1}+|v(x+\delta_i)|^{\gamma-1})
|D_i^+v(x)|
$$
and insert in (\ref{sobolev_1})
\begin{align*}
(\|v\|_{L^\frac{2n}{n-2}})^\frac{2n-2}{n-2}=
\|\,|v|^{\gamma-1} v\,\|_{L^\frac{n}{n-1}} &\leq \frac{\gamma}{n} \sum_{i=1}^n
\sum_{x\in \R^n_{h}} \big(|v(x)|^{\gamma-1}+|v(x+\delta_i)|^{\gamma-1}\big) |D_i^+
v(x)|\h \\
&\leq \frac{\gamma}{n} \Big(\sum_{i=1}^n\sum_{x\in \R^n_h}
4|v(x)|^\frac{2n}{n-2}\h\Big)^\frac{1}{2}\Big(\sum_{i=1}^n \sum_{x\in \R^{n}_{h}}
|D_i^+ v(x)|^2 \h\Big)^\frac{1}{2}.\\
& \leq \frac{2\gamma}{\sqrt{n}} (\|v\|_{L^\frac{2n}{n-2}})^\frac{n}{n-2}
\|v\|_D.
\end{align*}
This finishes the proof of the Sobolev inequality.
\qed

\medskip

The following results of Lemma~\ref{ineq}, Lemma~\ref{additivity}, Lemma~\ref{interpol} and Lemma~\ref{classical_estimate} prepare the proof of the 2-dimensional Sobolev inequality of Theorem~\ref{sobolev_embedd_2d}.

\begin{lemma} Let $p\in \N$ and $a,b,c\geq 0$. Then
$$
(p+2)(p+1)(a^p+b^p+c^p)\geq \frac{1}{b-c}\Big(\frac{b^{p+2}-a^{p+2}}{b-a} -
\frac{c^{p+2}-a^{p+2}}{c-a}\Big) \geq a^p+b^p+c^p.
$$
\label{ineq}
\end{lemma}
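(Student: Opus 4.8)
The plan is to identify the middle expression with a complete homogeneous symmetric polynomial in $a,b,c$ and then bound that polynomial termwise. Write
$$
E(a,b,c):=\frac{1}{b-c}\Big(\frac{b^{p+2}-a^{p+2}}{b-a}-\frac{c^{p+2}-a^{p+2}}{c-a}\Big),
$$
understood as a polynomial (at coinciding arguments its value is the obvious limit). The key step is the identity $E(a,b,c)=h_p(a,b,c)$, where $h_p(a,b,c):=\sum_{i+j+k=p}a^ib^jc^k$, the sum running over all triples of nonnegative integers with $i+j+k=p$. To prove it I would first expand each single difference as a geometric sum, $\tfrac{b^{p+2}-a^{p+2}}{b-a}=\sum_{r=0}^{p+1}a^{p+1-r}b^{r}$ and likewise with $c$ in place of $b$; upon subtracting, the $r=0$ terms cancel and one is left with $\sum_{r=1}^{p+1}a^{p+1-r}(b^{r}-c^{r})$. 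Then I would use $\tfrac{b^{r}-c^{r}}{b-c}=\sum_{s=0}^{r-1}b^{s}c^{r-1-s}$ and reindex via $(i,j,k)=(p+1-r,\,s,\,r-1-s)$; a short check shows this sets up a bijection between $\{\,1\le r\le p+1,\ 0\le s\le r-1\,\}$ and $\{\,i,j,k\ge 0,\ i+j+k=p\,\}$, which yields $E=h_p$. (Equivalently, $E$ is the second divided difference of $t\mapsto t^{p+2}$ at $a,b,c$, which is classically equal to $h_p$.)

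With the identity in hand, both inequalities take one line. For the lower bound: since $a,b,c\ge 0$ every monomial in $h_p$ is nonnegative, and the monomials $a^{p},b^{p},c^{p}$ (from $(i,j,k)=(p,0,0),(0,p,0),(0,0,p)$) occur among the summands, so $h_p(a,b,c)\ge a^{p}+b^{p}+c^{p}$. For the upper bound: the number of admissible triples is $\binom{p+2}{2}=\tfrac12(p+1)(p+2)$, and each monomial satisfies $a^{i}b^{j}c^{k}\le\max\{a,b,c\}^{p}\le a^{p}+b^{p}+c^{p}$; summing gives $h_p(a,b,c)\le\tfrac12(p+1)(p+2)(a^{p}+b^{p}+c^{p})\le(p+1)(p+2)(a^{p}+b^{p}+c^{p})$.

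The only spot requiring care is the reindexing in the proof of $E=h_p$; everything after that is immediate, and in fact the stated constant $(p+1)(p+2)$ exceeds the sharp one $\tfrac12(p+1)(p+2)$ by a harmless factor $2$. Two remarks worth recording: the argument needs $p\ge 1$ (for $p=0$ one has $E\equiv 1$ but the claimed right-hand inequality reads $1\ge 3$, so here $\N$ means $\{1,2,\dots\}$), and the degenerate cases $a=b$, $b=c$ or $a=c$ are handled automatically by reading $E$ as the polynomial $h_p$, to which it extends continuously.
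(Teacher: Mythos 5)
Your proof is correct and essentially self-contained. The identity you isolate -- that the middle expression $E(a,b,c)$ is the second divided difference of $t\mapsto t^{p+2}$ at $a,b,c$, hence equals the complete homogeneous symmetric polynomial $h_p(a,b,c)=\sum_{i+j+k=p}a^ib^jc^k$ -- is a cleaner organizing principle than what the paper does. The paper's proof of the \emph{lower} bound is a less structured version of the same expansion: it writes $E=\frac{1}{b-c}\sum_{q=0}^{p+1}(b^q-c^q)a^{p+1-q}$, splits off the three ``corner'' terms $a^p$, $b^p$, $c^p$, and observes the remainder is nonnegative; you get the identical conclusion by noticing these monomials sit inside $h_p$.

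Where you genuinely diverge is the \emph{upper} bound. The paper applies the mean value theorem twice to bound $E\le(p+2)(p+1)\max\{a,b,c\}^p$; you instead count the $\binom{p+2}{2}=\tfrac{1}{2}(p+1)(p+2)$ monomials of $h_p$ and bound each by $\max\{a,b,c\}^p$. Both routes are short, but yours is purely algebraic, makes the symmetry of $E$ transparent without the auxiliary rational-function identity the paper records for that purpose, and -- as you observe -- yields the sharper constant $\tfrac{1}{2}(p+1)(p+2)$ for free. Your side remarks (that $p\ge 1$ is needed, and that degenerate equalities among $a,b,c$ are handled by reading $E$ as a polynomial) are both correct and worth keeping; the paper is silent on these points.
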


\begin{proof}
From
  \begin{equation*}
    \frac{1}{b-c}\Big(\frac{b^{p+2}-a^{p+2}}{b-a} - \frac{c^{p+2}-a^{p+2}}{c-a}\Big)
    = \frac{ab(a^{p+1} - b^{p+1}) + ac (c^{p+1} - a^{p+1}) + bc (b^{p+1}-c^{p+1})}{(b-c)(b-a)(c-a)}
  \end{equation*}
we see that all three expressions are invariant under permutations of $a,b,c$,
i.e., any convenient ordering may be assumed. The left inequality
of the statement follows from an application of the mean value theorem, i.e.,
$$
\frac{1}{b-c}\Big(\frac{b^{p+2}-a^{p+2}}{b-a} -
\frac{c^{p+2}-a^{p+2}}{c-a}\Big) \leq (p+2)(p+1)\max\{a,b,c\}^p\leq (p+2)(p+1)(a^p+b^p+c^p).
$$
For the right inequality of the statement, notice that
\begin{align*}
\frac{1}{b-c}\Big(\frac{b^{p+2}-a^{p+2}}{b-a} -
\frac{c^{p+2}-a^{p+2}}{c-a}\Big) &= \frac{1}{b-c}\Big(\sum_{q=0}^{p+1}
(b^q-c^q)a^{p+1-q} \Big)\\
& = a^p+\underbrace{\sum_{q=2}^p \left[ \sum_{r=0}^{q-1}
b^r c^{q-1-r} \right]a^{p+1-q}}_{\geq 0}+\frac{b^{p+1}-c^{p+1}}{b-c}\\
& \geq a^p+b^p+c^p.
\end{align*}
\end{proof}

\begin{lemma}[Norm additivity] Let $\Omega\subset\R^2$ be an $2$-dimensional box such that $\overline{\Omega}=\bigcup_i \overline{\Omega}_i$ with at most countably many $2$-dimensional, mutually disjoint boxes $\Omega_i\subset\R^2$. Then the following holds:
\begin{alignat*}{5}
\|u\|_{L^p(\Omega_h)}^p &&& \leq \sum_i \|u\|_{L^p(\Omega_{i,h})}^p &&\leq 4\|u\|_{L^p(\Omega_h)}^p \\
\|u\|_{D(\Omega_h)}^2 &&& \leq \sum_i \|u\|_{D(\Omega_{i,h})}^2 && \leq 2\|u\|_{D(\Omega_h)}^2
\end{alignat*}
\label{additivity}
\end{lemma}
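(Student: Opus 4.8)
The plan is to reduce both double inequalities to a count of \emph{multiplicities}: for a grid point $x$ (resp. a lattice edge) one asks in how many of the sub-boxes $\Omega_i$ the corresponding summand occurs. For $x\in\overline{\Omega}_h$ set $m(x):=\#\{i:x\in\overline{\Omega}_{i,h}\}$ and, for $j\in\{1,2\}$, put $m_j(x):=\#\{i:x\in\overline{\Omega}_{i,h}\setminus\partial_j^+\Omega_{i,h}\}$. Since all terms occurring below are nonnegative, the (at most countable) sums may be interchanged freely, and expanding the norms gives
$$
\sum_i\|u\|_{L^p(\Omega_{i,h})}^p=\sum_{x\in\overline{\Omega}_h}m(x)\,|u(x)|^p\h,
$$
$$
\sum_i\|u\|_{D(\Omega_{i,h})}^2=\sum_{j=1}^2\sum_{x\in S_j}m_j(x)\,|D_j^+u(x)|^2\h,
$$
where $S_j:=\bigcup_i(\overline{\Omega}_{i,h}\setminus\partial_j^+\Omega_{i,h})$; here $D_j^+u(x)=(u(x+\delta_j)-u(x))/h_j$ is literally the same number whether read off $\Omega_i$ or off $\Omega$, because $x\in\overline{\Omega}_{i,h}\setminus\partial_j^+\Omega_{i,h}$ forces $x+\delta_j\in\overline{\Omega}_{i,h}$ (mesh-compatibility of $\Omega_i$ is used here). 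Since a grid point of $\overline{\Omega}_{i,h}$ with $x_j=b_{i,j}$ automatically lies in $\partial_j^+\Omega_{i,h}$ (it is on $\partial\Omega_i$), one has $S_j\subseteq\overline{\Omega}_h\setminus\partial_j^+\Omega_h$, and the reverse inclusion is exactly the bound $m_j\ge1$ established below. Thus the Lemma reduces to the two estimates $1\le m(x)\le4$ for $x\in\overline{\Omega}_h$, and $1\le m_j(x)\le2$ for $x\in\overline{\Omega}_h\setminus\partial_j^+\Omega_h$.

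The lower bounds are immediate. From $\overline{\Omega}=\bigcup_i\overline{\Omega}_i$ we get $\overline{\Omega}_h=\bigcup_i\overline{\Omega}_{i,h}$, so $m(x)\ge1$. For an edge with base point $x\in\overline{\Omega}_h\setminus\partial_j^+\Omega_h$ consider its midpoint $z:=x+\tfrac12\delta_j$; it lies in the convex set $\overline{\Omega}$, hence in some $\overline{\Omega}_i$. Because $\Omega_i$ is mesh-compatible, $z_j=x_j+\tfrac12h_j$ is never a face level of $\Omega_i$, so $z\in\overline{\Omega}_i$ holds iff $a_{i,j}\le x_j$, $x_j+h_j\le b_{i,j}$ and $a_{i,k}\le x_k\le b_{i,k}$ for the coordinate $k\ne j$, i.e. iff $x\in\overline{\Omega}_{i,h}\setminus\partial_j^+\Omega_{i,h}$; thus $m_j(x)=\#\{i:z\in\overline{\Omega}_i\}\ge1$. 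For the upper bounds, let $y$ (either $x$ or $z$) belong to several of the closures $\overline{\Omega}_i$, and near $y$ consider the four open quadrants $Q_\epsilon:=\{w:\epsilon_1(w_1-y_1)>0,\,\epsilon_2(w_2-y_2)>0\}$, $\epsilon\in\{\pm1\}^2$. For each $i$ with $y\in\overline{\Omega}_i$, a small neighbourhood of $y$ in the axis-parallel box $\Omega_i$ contains at least one of the $Q_\epsilon$, and since the $\Omega_i$ are pairwise disjoint, distinct indices claim disjoint sets of quadrants; hence at most four indices occur, i.e. $m(x)\le4$. If $y=z$, then $z_j$ is no face level of any $\Omega_i$, so near $z$ the coordinate $j$ is free in each box containing $z$; such a box then contains, together with any quadrant, also the one obtained by flipping $\epsilon_j$, hence at least two quadrants, which forces $m_j(x)\le4/2=2$.

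Feeding $1\le m(x)\le4$ and $1\le m_j(x)\le2$ back into the two displayed identities of the first paragraph yields
$$
\|u\|_{L^p(\Omega_h)}^p\le\sum_i\|u\|_{L^p(\Omega_{i,h})}^p\le4\,\|u\|_{L^p(\Omega_h)}^p,
$$
$$
\|u\|_{D(\Omega_h)}^2\le\sum_i\|u\|_{D(\Omega_{i,h})}^2\le2\,\|u\|_{D(\Omega_h)}^2,
$$
which is the assertion. The step that I expect to demand genuine care — the main obstacle — is the reduction of the discrete $D$-norm summands on a sub-box to the position of the corresponding edge midpoint: this is precisely what pins the second constant at $2$ rather than at $4$, and it relies on all the pieces $\Omega_i$ being compatible with the mesh (a standing convention for boxes in this paper; without it $\|u\|_{D(\Omega_{i,h})}$ need not even be well defined, since $x+\delta_j$ could leave $\overline{\Omega}_{i,h}$). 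Everything else is the elementary combinatorial fact that at most $2^n$ pairwise disjoint axis-parallel boxes in $\R^n$ can share a common point of their closures, and at most $2^{n-1}$ of them a common lattice edge — here with $n=2$.
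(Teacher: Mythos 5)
Your proof is correct and follows the same multiplicity-counting approach as the paper, which simply asserts that each grid point lies in at most four sub-boxes and each discrete edge derivative appears in at most two. You supply the combinatorial justification (the open-quadrant argument, and the edge-midpoint observation for the lower bound on $m_j$) that the paper's one-line proof leaves implicit.
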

\begin{proof} The sum over all boxes $\Omega_{i,h}$ of the discrete $L^p$-norms includes all terms in the $L^p$-norm on $\Omega_h$, and each corner point is a member of at most $4$ boxes. This explains the inequality for the $L^p$-norms. Concerning the $D$-norm, again the sum over all boxes $\Omega_{i,h}$ of the discrete $D$-norms includes all terms in the $D$-norm on $\Omega_h$. This time, each discrete edge derivative occurs in at most $2$ boxes.
\end{proof}

\noindent
{\bf Remark.} Clearly, the above inequality has an $n$-dimensional version. The multiplicative factor in the $L^p$-norm becomes $2^n$ and in the $D$-norm it becomes $2^{n-1}$.

\begin{definition} For $n=2$ let $C_0=(0,h_1)\times(0,h_2)\subset\R^2$ be a unit
  mesh-box. Consider the following partition in
  triangles $\overline{C}_0 = T^1\cup T^2$ with
\begin{align*}
T^1 = \conv\left\{\begin{pmatrix} 0\\ 0 \end{pmatrix},
\begin{pmatrix} h_1\\ 0\end{pmatrix},
\begin{pmatrix} h_1 \\ h_2\end{pmatrix}\right\}, \qquad
T^2 =
\conv\left\{\begin{pmatrix} 0\\ 0 \end{pmatrix},
\begin{pmatrix} 0\\ h_2\end{pmatrix},
\begin{pmatrix} h_1 \\ h_2\end{pmatrix}\right\}.
\end{align*}
Note that a box $\overline{\Omega}$ as above can be decomposed in an at most
countable union of triangles which will be written as
$$
\overline{\Omega} = \bigcup_i  (T_i^1\cup T_i^2)
$$
where each $T_i^\alpha$ is a shift of $T^\alpha$, $\alpha=1,2$.
\end{definition}

\begin{lemma}[Interpolation lemma] Let $n=2$ and $\Omega, \Omega_h$ as
  above with $\overline{\Omega} = \bigcup_i (T_i^1\cup T_i^2)$. If $u:
  \overline{\Omega}_h\to [0,\infty)$ is a given function then there exists a
  continuous, piecewise on every $T_i^\alpha$ linear function $\tilde u:
  \overline{\Omega}\to [0,\infty)$ such that
\begin{equation}
u(y) = \tilde u(y) \mbox{ for all } y\in \overline{\Omega}_h
\label{values}
\end{equation}
and
\begin{equation}
D_i^+ u(x) =\frac{\partial \tilde u}{\partial x_i}(x)
\mbox{ for all } x \in \overline{\Omega}_{h} \setminus \partial^{+}_{i} \Omega_{h}
\mbox{ and all } 1 \le i \le n.
\label{derivatives}
\end{equation}
Moreover, for $p\in \N$
\begin{equation}
\frac{1}{8p^2} \sum_{x\in\overline{\Omega}_h} u(x)^p \h \leq \int_\Omega
\tilde u(x)^p\,dx \leq 8 \sum_{x\in\overline{\Omega}_h} u(x)^p \h.
\label{integrals_0}
\end{equation}
Finally
\begin{equation}
\int_{\Omega} |\nabla \tilde u|^2 \,dx \leq \sum_{i=1}^2 \sum_{x\in
  \overline{\Omega}_h\setminus \partial_i^+ \Omega_h}
|D_i^+ u(x)|^2 \h.
\label{integrals}
\end{equation}
\label{interpol}
\end{lemma}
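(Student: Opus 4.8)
The plan is to define $\tilde u$ by affine interpolation of $u$ on each triangle of the decomposition and then verify the four assertions in turn. On each closed triangle $T$ of $\overline{\Omega}=\bigcup_i(T_i^1\cup T_i^2)$ I would take $\tilde u|_T$ to be the unique affine function interpolating the three vertex values of $u$; on $T$ this is the convex combination of the (nonnegative) vertex values with barycentric-coordinate weights $\lambda_j\ge 0$, $\sum_j\lambda_j=1$, so $\tilde u\ge 0$ automatically. Since the triangles meet either along a full common edge or at a single vertex, and an affine function on a segment is determined by its endpoint values, the local interpolants agree on overlaps; hence $\tilde u$ is well-defined, continuous and piecewise affine on $\overline{\Omega}$, and \eqref{values} holds by construction. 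For \eqref{derivatives}, $\tilde u$ is affine along each grid edge $[x,x+\delta_i]$ with endpoint values $u(x),u(x+\delta_i)$, so its difference quotient in direction $e_i$ at $x$ equals $D_i^+u(x)$; concretely, a direct computation on the reference box identifies $\nabla\tilde u$ on $T^1$ and on $T^2$ — its two components are the forward differences of $u$ along the unique horizontal and the unique vertical edge of that triangle.

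\emph{Proof of \eqref{integrals}.} Each triangle has area $\h/2$ (here $\h=h_1h_2$), so $\int_\Omega|\nabla\tilde u|^2\,dx=\frac{\h}{2}\sum_i|\nabla\tilde u|_{T_i}|^2$ and, by the gradient computation, each summand equals $\frac{\h}{2}\bigl(|D_1^+u|^2+|D_2^+u|^2\bigr)$ evaluated along the two axis-parallel edges of that triangle. A horizontal grid edge $[x,x+\delta_1]$ is the bottom edge of the mesh box above it (where it belongs to a triangle of type $T^1$) and/or the top edge of the mesh box below it (where it belongs to a triangle of type $T^2$), so it lies in at most two triangles; a vertical edge likewise lies in at most two triangles. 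Hence each $|D_i^+u(x)|^2$ is counted at most twice, contributing at most $\h|D_i^+u(x)|^2$, and summing over all grid edges gives \eqref{integrals}.

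\emph{Proof of \eqref{integrals_0}.} The crucial point is an \emph{exact} evaluation of $\int_T\tilde u^p$, not merely an estimate. For an affine $f\ge 0$ on a triangle $T$ of area $A$ with vertex values $\alpha,\beta,\gamma$, expanding $f^p=(\alpha\lambda_1+\beta\lambda_2+\gamma\lambda_3)^p$ by the multinomial theorem and inserting the standard simplex integrals $\int_T\lambda_1^i\lambda_2^j\lambda_3^k\,dA=2A\,\tfrac{i!\,j!\,k!}{(i+j+k+2)!}$ yields
\[
\int_T f^p\,dA=\frac{2A}{(p+1)(p+2)}\sum_{i+j+k=p}\alpha^i\beta^j\gamma^k,
\]
and the right-hand symmetric sum is exactly the middle quantity appearing in Lemma~\ref{ineq}. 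That lemma therefore sandwiches $\int_Tf^p\,dA$ between $\tfrac{2A}{(p+1)(p+2)}(\alpha^p+\beta^p+\gamma^p)$ and $2A(\alpha^p+\beta^p+\gamma^p)$. Summing over all triangles (with $A=\h/2$), using that every grid point of $\overline{\Omega}_h$ is a vertex of at least one and at most six triangles of the decomposition, together with the elementary bounds $(p+1)(p+2)\le 8p^2$ and $6\le 8$ valid for $p\ge 1$, gives \eqref{integrals_0}.

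\emph{Expected main obstacle.} The delicate step is the algebraic identity for $\int_T\tilde u^p$: a naive lower bound — say, integrating only over the subregion of $T$ where one barycentric coordinate exceeds $\tfrac12$ — produces a constant decaying \emph{exponentially} in $p$, which would be worthless for the $p$-linear two-dimensional Sobolev inequality of Theorem~\ref{sobolev_embedd_2d} and for the Moser iteration built on it; it is precisely the exact identity combined with Lemma~\ref{ineq} (which was prepared for exactly this purpose) that restores the polynomial factor $\tfrac{1}{8p^2}$. The remaining points — consistency of the local interpolants across shared edges and the multiplicity counts at boundary vertices and boundary edges — are routine bookkeeping.
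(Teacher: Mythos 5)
Your proof is correct and follows essentially the same route as the paper: define the piecewise affine interpolant on the triangulation, compute $\int_T \tilde u^p$ exactly, sandwich it via Lemma~\ref{ineq}, and account for vertex/edge multiplicities. The only cosmetic difference is that you derive the exact value of $\int_T \tilde u^p$ via barycentric coordinates and the multinomial theorem (and correctly observe that $\sum_{i+j+k=p}\alpha^i\beta^j\gamma^k$ coincides with the middle expression of Lemma~\ref{ineq}), whereas the paper integrates the affine form directly; likewise you count triangle multiplicities directly rather than routing through the box-additivity Lemma~\ref{additivity} — both give the same $8$ and $8p^2$.
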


\begin{proof}
Let $\tilde u$ be the linear interpolant between the values of $u$ at the corners of each
$T^\alpha_i$. Then (\ref{values}) and (\ref{derivatives}) follow
immediately. Due to the additivity property of the $L^p$ and $W^{1,p}$-norm from Lemma~\ref{additivity}, it is sufficient to consider first the statements \eqref{integrals_0}, \eqref{integrals} for the case when
$\Omega=C_0=(0,h_1)\times(0,h_2)$ is a mesh-box and
$\overline{C}_0=T^1\cup T^2$.

\medskip

So let us first consider (\ref{integrals_0}) where for
brevity we write $T^1=\conv\{A,B,D\}, T^2=\conv\{A,C,D\}$,
cf. Figure \ref{fg:RECTANGLE} with  $A=(0,0), B=(h_1,0), C=(0,h_2), D=(h_1,h_2)$.
\ifnum\pdfoutput=1
\begin{figure}[h]
\centering
\includegraphics{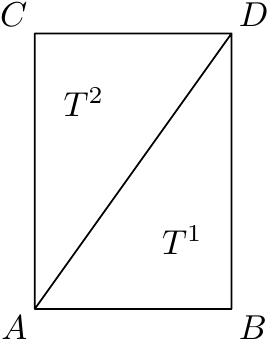}
\caption{}
\label{fg:RECTANGLE}
\end{figure}
\fi
On
$T^1$ the linear interpolant $\tilde u$ takes the form
$$
\tilde u(x_1,x_2) =
u(A)+x_1\frac{u(B)-u(A)}{h_1}+x_2\frac{u(D)-u(B)}{h_2}
$$
and
\begin{multline*}
\int_{T^1} \tilde u(x_1,x_2)^p d(x_1,x_2) = \\
\frac{h_1h_2}{(p+1)(p+2)}\frac{1}{u(D)-u(B)}
\Big(\frac{u(D)^{p+2}-u(A)^{p+2}}{u(D)-u(A)}-\frac{u(B)^{p+2}-u(A)^{p+2}}
{u(B)-u(A)}\Big).
\end{multline*}
Using the inequality of Lemma \ref{ineq} we find
$$
\big(u(A)^p+u(B)^p+u(D)^p\big)h_1h_2\geq \int_{T^1} \tilde
u(x_1,x_2)^p \,d(x_1,x_2) \geq
\frac{h_1h_2}{(p+1)(p+2)}\big(u(A)^p+u(B)^p+u(D)^p\big).
$$
A similar inequality holds for $T^2$. Since $\overline{C}_{0,h}=T^1\cup T^2$
we get
\begin{multline*}
\Big(2u(A)^p+u(B)^p+2u(D)^p+u(C)^p\Big)h_1h_2 \\
\geq \int_{C_0} \tilde
u(x_1,x_2)^p\,d(x_1,x_2) \geq \frac{h_1h_2}{(p+1)(p+2)}
\Big(2u(A)^p+u(B)^p+2u(D)^p+u(C)^p\Big).
\end{multline*}
Together with the inequality $(p+1)(p+2)\leq 8p^2$ we get an inequality similar to (\ref{integrals_0}) first for the mesh-box $C_0$ and then via Lemma~\ref{additivity} also for the general $2$-dimensional box by introducing another factor of $4$ for the upper bound.

\medskip

For the proof of (\ref{integrals}) note that
\begin{equation}
\int_{C_0} |\nabla \tilde u|^2\,dx = \frac{h_1h_2}{2}
\big(\bigl|\nabla\tilde u|_{T^1}\bigr|^2 + \bigl|\nabla \tilde u|_{T^1}\bigr|^2\big)
\label{int_1}
\end{equation}
and
\begin{equation}
\sum_{i=1}^2 \sum_{x\in
  \overline{C}_{0,h}\setminus \partial_i^+ C_{0,h}} |D_i^+ u(x)|^2\h =
h_1h_2\big(|D_1^+u(A)|^2+|D_1^+u(C)|^2+|D_2^+u(A)|^2+
|D_2^+u(B)|^2\big).
\label{int_2}
\end{equation}
Moreover, since $\tilde u$ is linear in $T^1, T^2$ and due to
(\ref{derivatives}) we have
\begin{align*}
\bigl|\nabla\tilde u|_{T^1}\bigr|^2 = |\partial_1 \tilde
u(A)|^2+|\partial_2\tilde u(B)|^2 = |D_1^+ u(A)|^2 +
|D_2^+u(B)|^2,\\
\bigl|\nabla\tilde u|_{T^2}\bigr|^2 = |\partial_1 \tilde
u(C)|^2+|\partial_2\tilde u(A)|^2 = |D_1^+ u(C)|^2 +
|D_2^+u(A)|^2.
\end{align*}
Adding the last two equations and using (\ref{int_1}), (\ref{int_2}) we find (\ref{integrals}) first for the mesh box $C_0$ (with a factor $1/2$ in the righ-hand side)
and then by summing over mesh-boxes and using Lemma~\ref{additivity}
also for the $2$-dimensional domain $\Omega$.
\end{proof}

The following lemma is well-known, cf. Gilbarg, Trudinger \cite{GT} or Adams \cite{adams}, but usually explicit estimates for the constants have to be extracted from the proof. We state the result with explicit bounds and give a proof.

\begin{lemma} Let $Q$ be a two-dimensional open rectangle with edge lengths $\alpha,\beta$ and $\tilde u\in W^{1,2}(Q)$. For every exponent $q\geq 1$ we have
$$
\int_Q |\tilde u|^{q}\,dx \leq \max\left\{2\sqrt{\pi}\frac{\alpha}{\beta}, 2\sqrt{\pi}\frac{\beta}{\alpha}, \frac{2}{\sqrt{|Q|}}\right\}^{q} |Q|\left(1+\frac{q}{2}\right)^{1+\frac{q}{2}}\|\tilde u\|_{W^{1,2}(Q)}^{q}.
$$
\label{classical_estimate}
\end{lemma}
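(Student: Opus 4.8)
The plan is to reduce the rectangle to a fixed reference square by an anisotropic scaling, apply a Morrey‑type pointwise representation on the square, and estimate the resulting Riesz potential by Young's convolution inequality. Concretely, write $Q=(0,\alpha)\times(0,\beta)$ (after a translation), let $S=(0,1)^2$, set $\mu:=\max\{\alpha/\beta,\beta/\alpha\}$, and put $v(z):=\tilde u(\alpha z_1,\beta z_2)$. The change of variables $x=(\alpha z_1,\beta z_2)$ gives the identities $\|\tilde u\|_{L^q(Q)}^q=|Q|\,\|v\|_{L^q(S)}^q$, $\|\tilde u\|_{L^2(Q)}^2=|Q|\,\|v\|_{L^2(S)}^2$, and, most importantly, $\|\nabla v\|_{L^2(S)}^2=\tfrac{\alpha}{\beta}\|\partial_1\tilde u\|_{L^2(Q)}^2+\tfrac{\beta}{\alpha}\|\partial_2\tilde u\|_{L^2(Q)}^2\le\mu\,\|\nabla\tilde u\|_{L^2(Q)}^2$. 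This anisotropic distortion of the Dirichlet integral is exactly what produces the factors $\alpha/\beta$ and $\beta/\alpha$ in the final constant, and working on the fixed square $S$ keeps the diameter out of the $q$‑dependent part.

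Next I would split $v=(v-v_S)+v_S$ with $v_S:=\int_S v$. For the oscillation part I use the classical pointwise estimate on the convex set $S$ (Gilbarg--Trudinger, Lemma~7.16), namely $|v(z)-v_S|\le\frac{(\diam S)^2}{2|S|}\int_S\frac{|\nabla v(w)|}{|z-w|}\,dw=\int_S\frac{|\nabla v(w)|}{|z-w|}\,dw$, valid since $v\in W^{1,1}(S)$. Since $\nabla v\in L^2(S)$ and the kernel $|z|^{-1}\chi_{\{|z|\le\sqrt2\}}$ lies in $L^r(\R^2)$ for every $r<2$, Young's convolution inequality with $\frac1r=\frac12+\frac1q$ (which forces $q\ge2$) gives $\|v-v_S\|_{L^q(S)}\le\bigl\||z|^{-1}\bigr\|_{L^r(B_{\sqrt2})}\,\|\nabla v\|_{L^2(S)}$; the kernel norm is computed in polar coordinates as $\bigl(2\pi\,2^{1-r/2}/(2-r)\bigr)^{1/r}$, and upon raising everything to the $q$‑th power the angular factor $2\pi$ yields the $2\sqrt\pi$, while the singular factor $(2-r)^{-1/r}$ becomes $(\tfrac{q+2}{4})^{(q+2)/(2q)}$, whose $q$‑th power is $(1+q/2)^{1+q/2}$ up to a harmless power of $2$. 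For the mean value I use $|v_S|\le\|v\|_{L^2(S)}$, which transforms back into $|Q|^{-1/2}\|\tilde u\|_{L^2(Q)}$ and supplies the third entry $2/\sqrt{|Q|}$ of the maximum. Combining the two pieces by the triangle inequality, estimating $\|\nabla\tilde u\|_{L^2(Q)},\|\tilde u\|_{L^2(Q)}\le\|\tilde u\|_{W^{1,2}(Q)}$, transforming back, and taking $q$‑th powers yields the claim for $q\ge2$; the range $1\le q<2$ follows from the case $q=2$ by Hölder's inequality on the bounded set $Q$, with $2/\sqrt{|Q|}$ as the relevant entry of the maximum.

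The conceptual ingredients — the Morrey representation and Young's inequality — are completely standard; the real work is the explicit bookkeeping of constants. The delicate point is to check that the product of the Morrey constant $(\diam S)^2/(2|S|)$, the distortion factor $\sqrt\mu$, the constant lost in the triangle inequality, and the $q$‑dependent kernel norm collapses cleanly into $\max\{2\sqrt\pi\,\alpha/\beta,\ 2\sqrt\pi\,\beta/\alpha,\ 2/\sqrt{|Q|}\}^q$; I expect this numerical reconciliation — and possibly handling a small range of exponents $q$ near $2$ by interpolating between the $q=2$ case and a larger exponent rather than by the potential estimate — to be the only genuinely technical part of the proof.
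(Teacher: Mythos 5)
Your route is genuinely different from the paper's and the scaling idea is a clean simplification. The paper stays on $Q$ itself: it averages the mean-value formula over a translated sub-rectangle $Q_x$ of half edge lengths, does the Fubini manipulation to obtain the pointwise bound
$$
\tfrac14|Q|\,|\tilde u(x)|\le\tfrac{\sqrt{|Q|}}2\|\tilde u\|_{L^2(Q)}+\tfrac12\max\{\alpha^2,\beta^2\}\int_Q|w-x|^{-1}|\nabla\tilde u(w)|\,dw,
$$
and then takes $L^q$-norms and applies the explicit $L^2\to L^q$ Riesz-potential estimate from \cite[Lemma~7.12]{GT}. Your plan --- anisotropic rescaling to the unit square, the Morrey representation of \cite[Lemma~7.16]{GT} for $v-v_S$, and a potential bound --- is structurally tidier, explains the factors $\alpha/\beta$, $\beta/\alpha$ in one line as the distortion of the Dirichlet integral, and in fact produces the slightly better factor $\sqrt{\mu}$ with $\mu=\max\{\alpha/\beta,\beta/\alpha\}$ where the paper's sub-rectangle route gives $\mu$.

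The step where your bookkeeping will not reconcile is the use of Young's convolution inequality for the Riesz potential. With $r=2q/(q+2)$ one computes
$$
\bigl\||z|^{-1}\bigr\|_{L^r(B_{\sqrt2})}^{\,q}=\Bigl(\frac{2\pi\cdot 2^{1-r/2}}{2-r}\Bigr)^{q/r}=2\,\pi^{1+q/2}\Bigl(1+\tfrac q2\Bigr)^{1+q/2},
$$
whereas the Riesz-potential estimate of \cite[Lemma~7.12]{GT} on $S$ with $|S|=1$ --- the one the paper quotes, with constant $(1+q/2)^{1/2+1/q}\sqrt{\pi}\,|S|^{1/q}$ --- gives $\pi^{q/2}(1+q/2)^{1+q/2}$ after raising to the $q$-th power. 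Young therefore loses a flat factor $2\pi$ that does not get absorbed into the $(1+q/2)^{1+q/2}$ or into the max; in particular your remark that the angular factor ``yields the $2\sqrt\pi$'' does not check out, since $(2\pi)^{q/r}=(2\pi)^{1+q/2}\ne(2\sqrt\pi)^q$. Because this is an explicit-constants lemma whose value is fed literally into $C_S(2)$, you should replace Young by \cite[Lemma~7.12]{GT} (which has the same $q\ge2$ restriction you already flag); with that swap your rescaled argument matches, and even slightly tightens, the paper's computation. Finally, the range $1\le q<2$ needs no interpolation: $\|\tilde u\|_{L^q(Q)}\le|Q|^{1/q-1/2}\|\tilde u\|_{L^2(Q)}\le|Q|^{1/q-1/2}\|\tilde u\|_{W^{1,2}(Q)}$ is already dominated by the right-hand side because $\max\{\ldots\}\ge 2/\sqrt{|Q|}$ and $(1+q/2)^{1+q/2}\ge1$.
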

\begin{proof} For every $x\in Q$ there exists an open rectangle $Q_x$ of half the edge length of
$Q$ such that $0$ is a corner of $Q_x$ and $x+Q_x\subset Q$. For $x\in Q$ and $y\in x+Q_x$ we have
$$
\tilde u(x) = \tilde u(y) + \int_0^1 \frac{d}{dt} \tilde u\bigl(tx+(1-t)y\bigr)\,dt = \tilde u(y) + \int_0^1 \nabla \tilde u\bigl(tx +(1-t)y\bigr) \cdot (x-y)\,dt.
$$
After taking absolute values and integrating with respect to $y\in x+Q_x$ we obtain
\begin{align*}
\frac{1}{4}|Q| |\tilde u(x)| & \leq \int_{x+Q_x} |\tilde u(y)|\,dy + \int_{x+Q_x} |x-y|\int_0^1 \left|\nabla \tilde u\bigl(tx +(1-t)y\bigr)\right|\,dt\,dy \\
& \leq \sqrt{|Q_x|} \|\tilde u\|_{L^2(Q)}+\int_0^1 \int_{Q_x} |z| |\nabla \tilde u(x+(1-t)z)|\,dz\,dt \\
& =\frac{\sqrt{|Q|}}{2}\|\tilde u\|_{L^2(Q)} + \int_0^1 \int_{(1-t)Q_x} |w| |\nabla \tilde u(x+w)| (1-t)^{-3}\,dw \,dt.
\end{align*}
The set $\{(t,w): 0\leq t \leq 1, w \in (1-t)Q_x\}$ is the same as
$$\left\{(t,w): w\in Q_x, 0\leq t \leq \min\left\{1-\frac{2|w_1|}{\alpha}, 1- \frac{2|w_2|}{\beta}\right\}\right\}.
$$
Hence, using Fubini's theorem we obtain
\begin{align*}
\frac{1}{4}|Q| |\tilde u(x)| &\leq \frac{\sqrt{|Q|}}{2}\|\tilde u\|_{L^2(Q)} + \int_{Q_x} \int_0^ {1-2\max\left\{\frac{|w_1|}{\alpha},\frac{|w_2|}{\beta}\right\}} |w| |\nabla \tilde u(x+w)| (1-t)^{-3}\,dt \,dw \\
& \leq \frac{\sqrt{|Q|}}{2}\|\tilde u\|_{L^2(Q)} + \frac{1}{8}\int_{Q_x} |w| |\nabla \tilde u(x+w)| \left(\max\left\{\frac{|w_1|}{\alpha},\frac{|w_2|}{\beta}\right\}\right)^{-2}\,dw \\
& \leq \frac{\sqrt{|Q|}}{2}\|\tilde u\|_{L^2(Q)} + \frac{1}{2}\max\{\alpha^2,\beta^2\} \int_{Q_x} |w| |\nabla \tilde u(x+w)| (|w_1|+|w_2|)^{-2}\,dw \\
& \leq \frac{\sqrt{|Q|}}{2}\|\tilde u\|_{L^2(Q)} + \frac{1}{2}\max\{\alpha^2,\beta^2\} \int_Q |w-x|^{-1} |\nabla \tilde u(w)|\,dw.
\end{align*}
Next we use the following two-dimensional estimate for Riesz-potentials,
cf. Gilbarg, Trudinger \cite{GT}, Section 7.8: for $x\in Q$, $f\in L^2(Q)$
define the Riesz-potential $(Rf)(x) := \int_Q |x-y|^{-1} |f(y)|\,dy$.
Then for all exponents $1\leq q<\infty$ we have
$$
\|Rf\|_{L^q(Q)} \leq \left(1+\frac{q}{2}\right)^{\frac{1}{2}+\frac{1}{q}}
                     \sqrt{\pi} |Q|^\frac{1}{q} \|f\|_{L^2(Q)}.
$$
Applying this to $f=|\nabla \tilde u|$ and using sublinearity of the norm we get
$$
\frac{1}{4}|Q| \|\tilde u\|_{L^q(Q)} \leq \frac{|Q|^{\frac{1}{2}+\frac{1}{q}}}{2}\|\tilde u\|_{L^2(Q)}+ \frac{1}{2}\max\{\alpha^2,\beta^2\} \left(1+\frac{q}{2}\right)^{\frac{1}{2}+\frac{1}{q}} \sqrt{\pi} |Q|^\frac{1}{q} \||\nabla \tilde u|\|_{L^2(Q)}
$$
and estimating
$\max\{\|\tilde{u}\|_{L^{2}}, \|\nabla \tilde{u}\|_{L^{2}}\} \le \|\tilde{u}\|_{W^{1,2}}$,
we obtain the claim:
$$
 \|\tilde u\|_{L^{q}(Q)}^{q} \leq
 \max\left\{2\sqrt{\pi}\frac{\alpha}{\beta}, 2\sqrt{\pi}\frac{\beta}{\alpha}, \frac{2}{\sqrt{|Q|}}\right\}^{q}
 |Q|\left(1+\frac{q}{2}\right)^{1+\frac{q}{2}}\|\tilde{u}\|_{W^{1,2}(Q)}^{q}.
$$
\end{proof}

\medskip

\noindent
\emph{Proof of Theorem~ \ref{sobolev_embedd_2d}:} We decompose $\R^2$ into mutually
  disjoint open rectangles $Q_i$ such that
  $\R^2=\bigcup_{i=1}^\infty \overline{Q}_i$ where up to a rigid motion each
  $Q_i$ is identical to a fixed box $Q := (0,2^kh_1)\times (0,2^lh_2)$. The integers $2^k,2^l$ are fixed such
  that $2^{-k} \leq h_1 < 2^{-k+1}$ and $2^{-l}\leq h_2 < 2^{-l+1}$ so that the edge lengths of $Q$ are between $1$ and $2$ and the area of $Q$ is between $1$ and $4$. In this way we also obtain the
  decomposition $\R^2_h = \bigcup_{i=1}^\infty\overline{Q}_{i,h}$. First we establish the inequality
  for $u\in W^{1,2}(\overline{Q}_h)$ with norms $\|\cdot\|_{A(\overline{Q}_h)}$ and
    $\|\cdot\|_{W^{1,2}(\overline{Q}_h)}$. The inequality \eqref{sobolev_2d} follows by using the basic
    inequality $A(st)\leq s^2 A(t)$ for $0\leq s<1$:
\begin{align*}
\sum_{x\in\R^n_h}
A\Big(\frac{u(x)}{C_S(2)\|u\|_{W^{1,2}(\R^2_h)}}\Big)\h &\leq
\sum_{i=1}^\infty \sum_{x\in \overline{Q}_{i,h}}
A\Big(\frac{u(x)}{C_S(2)\|u\|_{W^{1,2}(\R^2_h)}}\Big)\h\\
& \leq \sum_{i=1}^\infty
\frac{\|u\|^2_{W^{1,2}(\overline{Q}_{i,h})}}{4\|u\|^2_{W^{1,2}(\R^2_h)}}
\sum_{x\in \overline{Q}_{i,h}}
A\Big(\frac{2u(x)}{C_S(2)\|u\|_{W^{1,2}(\overline{Q}_{i,h})}}\Big)\h\\
& \leq \sum_{i=1}^\infty
\frac{\|u\|^2_{W^{1,2}(\overline{Q}_{i,h})}}{4\|u\|^2_{W^{1,2}(\R^2_h)}}\leq 1,
\end{align*}
provided $C_S(2)/2$ is the constant in (\ref{sobolev_2d}) for the rectangle
$\overline{Q}_h$. In other words: once we have the inequality \eqref{sobolev_2d}
for the rectangle $\overline{Q}_h$ then we obtain (\ref{sobolev_2d}) for $\R^2_h$ by doubling the constant.

\medskip

Now we prove (\ref{sobolev_2d}) for the rectangle $Q_h$. It suffices to give
  the proof for $u\geq 0$. The general case follows by replacing $u$ with $|u|$
  and using the reverse triangle inequality to verify that $|D_i^+ |u|(x)| \leq |D_i^+ u(x)|$.
  So let $u:\overline{Q}_h\to [0,\infty)$ be given and let $\tilde u$ be its linear
  interpolation from Lemma \ref{interpol}. Using Lemma~\ref{classical_estimate} for $Q$, $1\leq |Q|\leq 4$ and edge-lengths between $1$ and $2$ we have
$$
\int_Q \tilde u(x)^{2k} \,dx \leq 4(16 \pi)^k (1+k)^{1+k}\|\tilde
u\|_{W^{1,2}(Q)}^{2k} \leq  8(32\pi)^k k^{1+k}\|\tilde
u\|_{W^{1,2}(Q)}^{2k}.
$$
Using (\ref{integrals_0}) and (\ref{integrals})
of Lemma~\ref{interpol} we obtain
$$
\sum_{x\in \overline{Q}_h} u(x)^{2k}\h \leq 256(32\pi)^k k^{k+3}
\|u\|_{W^{1,2}(\overline{Q}_h)}^{2k}.
$$
Next we divide by $k!$ and use the inequalities (verified by induction)
$k^k/k! \leq e^{k-1}$, $k^3 \leq 4^k$ to obtain
$$
\sum_{x\in \overline{Q}_h} \frac{u(x)^{2k}}{k!}\h \leq \frac{256}{e}(128\pi e)^k
\|u\|_{W^{1,2}(Q_h)}^{2k}.
$$
Dividing again by $C'^{2k}\|u\|^{2k}_{W^{1,2}(\overline{Q}_h)}$
and summing from $k=1$ to $\infty$ we find
$$
\sum_{x\in \overline{Q}_h}
A\Big(\frac{u(x)}{C'\|u\|_{W^{1,2}(Q_h)}}\Big)\h \leq
\frac{256}{e}\sum_{k=1}^\infty \Big(\frac{128\pi e}{C'^2}\Big)^k.
$$
The choice of $C'^2=128\pi(e+256)$ makes the right hand side equal to
$1$. Thus, we get (\ref{sobolev_2d}) on $\overline{Q}_h$ with this
constant $C'$ and hence (\ref{sobolev_2d}) on $\R^2_h$ with
$C_S(2)=2C'=8\sqrt{2\pi(e+256)}$.

\medskip

Finally, for the proof of (\ref{power-sobolev_2d}) let $p\geq 2$ and
assume $u\in W^{1,2}(\R^2_h)$. Let
$\lfloor p\rfloor$ be the largest integer $\leq p$ and $\lceil
p\rceil$  be the smallest integer $\geq p$. Then $|u|^p\leq
|u|^{\lfloor p \rfloor}$ if $|u|\leq 1$ and $|u|^p\leq |u|^{\lceil p
  \rceil}$ if $|u|\geq 1$. Thus
$$
|u|^p = (u^2)^{p/2} \leq (u^2)^{\lfloor
  \frac{p}{2}\rfloor}+(u^2)^{\lceil \frac{p}{2}\rceil}\leq
(e^{u^2}-1)\Big(\left\lfloor \frac{p}{2}\right\rfloor!+
\left\lceil\frac{p}{2}\right\rceil!\Big)\leq
2(e^{u^2}-1)\left\lceil\frac{p}{2}\right\rceil!
$$
Using (\ref{sobolev_2d_explicit}) one finds
$$
\sum_{x\in\R^2_h} \left(\frac{|u(x)|}{C_S(2)\|u\|_{W^{1,2}}}\right)^p\h\leq
2\left\lceil\frac{p}{2}\right\rceil!\sum_{x\in\R^2_h}
A\left(\frac{|u(x)|}{C_S(2)\|u\|_{W^{1,2}}}\right)\h\leq
2\left\lceil\frac{p}{2}\right\rceil!.
$$
By using the inequality $(k!)^{1/k}\leq k$ and $\lceil
\frac{p}{2}\rceil/p\leq 1$ for $p\geq 2$ the previous inequality
implies
$$
\|u\|_{L^p} \leq
2^{1/p}C_S(2)\Big(\left\lceil\frac{p}{2}\right\rceil!\Big)^{1/p}\|u\|_{W^{1,2}}
\leq 2C_S(2)\left\lceil\frac{p}{2}\right\rceil\|u\|_{W^{1,2}} \leq
2C_S(2)p\|u\|_{W^{1,2}}.
$$
This finishes the proof of Theorem~\ref{sobolev_embedd_2d}.
\qed

\medskip

\begin{lemma}[Hardy's inequality in $1d$] \label{lm:HARDY_1D}
  Let $\Omega\subset \R$ be a bounded one-dimensional box. Then
    \begin{equation*}
      \sum_{x \in \Omega_{h}} \frac{u(x)^{2}}{\dist(x, \partial \Omega_{h})^{2}}
      \le 4 \sum_{x \in \overline{\Omega}_{h} \setminus \partial^{+} \Omega_{h}}
      \left( D^{+}_{h} u(x) \right)^{2}
    \end{equation*}
  for all $u \in W_{0}^{1,2}(\overline{\Omega}_{h})$.
\end{lemma}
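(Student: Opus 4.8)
The plan is to translate the inequality into an elementary statement about finite sequences, reduce it by a midpoint splitting to the classical \emph{one-sided} discrete Hardy inequality, and prove the latter by a short telescoping--plus--Cauchy--Schwarz computation. First I would fix coordinates: write $\Omega=(a,b)$ with $a=k_1h$, $b=l_1h$, $N:=l_1-k_1\ge 2$, so that $\overline{\Omega}_h=\{x_j:=a+jh : 0\le j\le N\}$, $\Omega_h=\{x_1,\dots,x_{N-1}\}$, $\partial\Omega_h=\{x_0,x_N\}$, and put $v_j:=u(x_j)$. Since $\Omega_h$ is bounded, $u\in W_0^{1,2}(\overline{\Omega}_h)$ simply means $v_0=v_N=0$. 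Using $\dist(x_j,\partial\Omega_h)=h\min\{j,N-j\}$ and $D_h^+u(x_j)=(v_{j+1}-v_j)/h$, the common factor $h^{-2}$ on both sides cancels and the claim becomes the purely combinatorial inequality
\[
\sum_{j=1}^{N-1}\frac{v_j^2}{\min\{j,N-j\}^2}\le 4\sum_{j=0}^{N-1}(v_{j+1}-v_j)^2 .
\]

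The engine is the following one-sided version: for \emph{arbitrary} real numbers $w_0=0,w_1,\dots,w_m$,
\[
\sum_{j=1}^{m}\frac{w_j^2}{j^2}\le 4\sum_{j=1}^{m}(w_j-w_{j-1})^2 .
\]
To prove it I would set $a_j:=w_j-w_{j-1}$ and $B_j:=w_j/j$ (with $B_0:=0$), so that $jB_j-(j-1)B_{j-1}=a_j$. Expanding $B_j^2-2B_ja_j$ and using $2B_jB_{j-1}\le B_j^2+B_{j-1}^2$ together with $j-1\ge 0$ one obtains
\[
\frac{w_j^2}{j^2}-\frac{2w_ja_j}{j}=B_j^2-2B_ja_j\le -jB_j^2+(j-1)B_{j-1}^2 ,
\]
whose right-hand side telescopes, upon summation over $1\le j\le m$, to $-mB_m^2\le 0$. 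Hence $\sum_j w_j^2/j^2\le 2\sum_j (w_j/j)\,a_j$, and one application of Cauchy--Schwarz to the last sum gives $\sum_j w_j^2/j^2\le 2\bigl(\sum_j w_j^2/j^2\bigr)^{1/2}\bigl(\sum_j a_j^2\bigr)^{1/2}$, i.e.\ the asserted inequality with the sharp constant $4$. No positivity of the increments $a_j$ is used anywhere.

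Next I would split the interval at its midpoint. Set $m_1:=\lfloor N/2\rfloor$; then $\min\{j,N-j\}=j$ for $1\le j\le m_1$ and $\min\{j,N-j\}=N-j$ for $m_1+1\le j\le N-1$. Applying the one-sided inequality to $(v_0,\dots,v_{m_1})$ bounds $\sum_{j=1}^{m_1}v_j^2/j^2$ by $4\sum_{j=0}^{m_1-1}(v_{j+1}-v_j)^2$. Applying it to the reflected sequence $w_k:=v_{N-k}$, $0\le k\le N-1-m_1$ (which also starts from $0$ because $v_N=0$), bounds $\sum_{j=m_1+1}^{N-1}v_j^2/(N-j)^2=\sum_{k\ge1}w_k^2/k^2$ by $4\sum_{j=m_1+1}^{N-1}(v_{j+1}-v_j)^2$. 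Adding the two estimates, and noting that $\{0,\dots,m_1-1\}$ and $\{m_1+1,\dots,N-1\}$ are disjoint subsets of $\{0,\dots,N-1\}$ — only the single central difference $(v_{m_1+1}-v_{m_1})^2$ is left out, which is harmless since it is nonnegative — yields precisely the combinatorial inequality above, hence the Lemma.

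The one step that is not mere bookkeeping is the proof of the one-sided inequality: there is no local, term-by-term sum-of-squares certificate for it, since the discrete Hardy operator is only marginally nonnegative (a zero-energy resonance at the boundary). Thus the telescoping identity by itself is insufficient and the global Cauchy--Schwarz step is genuinely needed. Everything else — the passage to sequences and the midpoint split, including the parity case distinction, which is absorbed by discarding one difference term — is routine.
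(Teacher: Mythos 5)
Your proof is correct, and while it shares the midpoint-splitting framework with the paper's proof, it takes a genuinely different route on the core step. Both arguments first reduce the two-sided Hardy inequality on $\Omega_h$ to a one-sided inequality
\[
\sum_{j=1}^{m}\frac{w_j^2}{j^2}\le 4\sum_{j=1}^{m}(w_j-w_{j-1})^2, \qquad w_0=0,
\]
by splitting at the midpoint and discarding (or double-counting) the central difference term, which is harmless because it is nonnegative. From there the paths diverge. The paper first \emph{monotonizes}: it replaces $u$ by the non-decreasing sequence defined by $v((k+1)h)=v(kh)+|u((k+1)h)-u(kh)|$, which dominates $|u|$ pointwise while its increments have the same moduli, and then cites inequality No.~326 from Hardy--Littlewood--P\'olya (stated there for non-negative increments). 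Your proof skips the monotonization entirely: you verify the algebraic inequality
\[
B_j^2-2B_ja_j\le -jB_j^2+(j-1)B_{j-1}^2,\qquad B_j=\tfrac{w_j}{j},\ a_j=w_j-w_{j-1},
\]
using only $2B_jB_{j-1}\le B_j^2+B_{j-1}^2$ and $j-1\ge 0$, sum it into a telescope that drops to $-mB_m^2\le 0$, and close with one application of Cauchy--Schwarz. This establishes the one-sided inequality with constant $4$ for \emph{arbitrary} real increments, so the sign reduction in the paper --- which exists only to match the form of HLP~326 --- becomes unnecessary. The trade-off: your argument is fully self-contained and slightly shorter, whereas the paper's is shorter \emph{on the page} because it delegates the hard step to a textbook reference. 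Both yield the same constant $4$, and both are correct.
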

\begin{proof} Without loss of generality we can assume $\Omega = (0, (l+1)h)$ with $l \in \N$.
  The proof consists of three steps.
  We begin with showing that it is sufficient to prove
    \begin{equation}\label{eq:HARDY_1D_FIRST}
      \sum_{k=1}^{s} \frac{u(kh)^{2}}{(kh)^{2}} \le
      4 \sum_{k=0}^{s-1} \left( \frac{u((k+1)h)-u(kh)}{h} \right)^{2}
    \end{equation}
  for all \map{u}{\{ 0,h,\dots, sh\}}{\R} with $u(0)=0$.
  Indeed, suppose this is proved. Then, for arbitrary $u\in W_0^{1,2}(\Omega_h)$, we obtain using (\ref{eq:HARDY_1D_FIRST})
    \begin{align*}
    \sum_{x \in \Omega_{h}} \frac{u(x)^{2}}{\dist(x, \partial \Omega_{h})^{2}}
       &\le \sum_{k=1}^{ \floor{l/2} } \frac{u(kh)^{2}}{(kh)^{2}}
      + \sum_{k=\ceil{l/2}}^{l} \frac{u(kh)^{2}}{((l+1)h - kh)^{2}}\\
      &= \sum_{k=1}^{ \floor{l/2} } \frac{u(kh)^{2}}{(kh)^{2}}
      + \sum_{j=1}^{l-\ceil{l/2} + 1} \frac{u((l+1-j)h)^{2}}{(jh)^{2}}\\
      &\le  4 \sum_{k=0}^{ \floor{l/2} - 1} (D^{+}_{h} u(kh))^{2}
      + 4 \sum_{j=0}^{ l- \ceil{l/2}} \left( \frac{u( (l-j+2)h ) - u((l-j+1)h)}{h} \right)^{2}\\
      &= 4 \sum_{k=0}^{ \floor{l/2} - 1} (D^{+}_{h} u(kh))^{2}
      + 4 \sum_{k=\ceil{l/2} }^{ l} \left( D^{+}_{h} u(kh) \right)^{2}
      \le 4 \sum_{k=0}^{l} (D^{+}_{h} u(kh))^{2}\\
      &= 4 \sum_{x \in \overline{\Omega}_{h} \setminus \partial^{+} \Omega_{h}}
      \left( D_{h}^{+} u(x) \right)^{2}.
      \end{align*}
Next we show that it does not restrict the generality to assume that $u$ in (\ref{eq:HARDY_1D_FIRST}) is non-decreasing, i.e. $u((k+1)h)-u(kh) \ge 0$, $0 \le k \le s-1$.
Indeed, let \map{u}{\{ 0,h, \dots, sh \}}{\R}, $u(0)=0$ be arbitrary
and define \map{v}{\{ 0, \dots, s h\}}{\R} by
  \begin{equation*}
  \begin{aligned}
    v(0) &:= u(0) = 0,\\
    v((k+1)h) &:= v(kh) + |u((k+1)h)-u(kh)|, \quad 0 \le k \le s-1.
  \end{aligned}
  \end{equation*}
By induction (assuming $|u(kh)|\leq v(kh)$) we see that
  \begin{equation*}
  \begin{aligned}
    |u(k+1)h| \le |u(kh)| + |u((k+1)h) - u(kh)| \le v((k+1)h)
  \end{aligned}
  \end{equation*}
i.e., $|u(kh)| \le v(kh)$ for all $k\in \{0,1,\ldots,s\}$. Since $v$ is non-decreasing this implies (using \eqref{eq:HARDY_1D_FIRST} for $v$) that
    \begin{equation*}
    \begin{aligned}
      \sum_{k=1}^{s} \frac{u(kh)^{2}}{(kh)^{2}}
      \le \sum_{k=1}^{s} \frac{v(kh)^{2}}{(kh)^{2}}
      \le 4\sum_{k=0}^{s-1} \left( D^{+}_{h} v(kh) \right)^{2}
      = 4\sum_{k=0}^{s-1} \left( D^{+}_{h} u(kh) \right)^{2}.
    \end{aligned}
    \end{equation*}
Finally, let a non-decreasing function $u:\{0,h,\ldots,sh\}\to \R$ with $u(0)=0$  be given.
For $a_k \geq 0$, $k=1,\ldots, s$, $ A_k := \sum_{i=1}^{k} a_{i}$ recall inequality No. 326 with $p=2$ from
\cite{HLP}:
\begin{equation}\label{eq:HARDY_HLP_INEQ}
  \begin{aligned}
    \sum_{k=1}^{s} \frac{A_{k}^{2}}{k^{2}} \le 4 \sum_{k=1}^{s} a_{k}^{2}.
  \end{aligned}
\end{equation}
If we set
$a_{k}:= u(kh) - u((k-1) h)\geq 0$ then we see that $A_{k} := \sum_{i=1}^{k} a_{i} = u(kh)$ and \eqref{eq:HARDY_HLP_INEQ} implies
  \begin{equation*}
    \sum_{k=1}^{s} \frac{u(kh)^{2}}{(kh)^{2}}
    \le 4 \sum_{k=1}^{s} \left( \frac{u(kh)-u((k-1)h)}{h} \right)^{2}
    = 4 \sum_{k=0}^{s-1} \left( D^{+}_{h} u(kh) \right)^{2},
  \end{equation*}
  which is the claimed inequality \eqref{eq:HARDY_1D_FIRST} for non-decreasing $u:\{0,h,\ldots,sh\}\to \R$ with $u(0)=0$
\end{proof}

\medskip

\noindent
\emph{Proof of Theorem~ \ref{th:HARDY_INEQUALITY}:}
We denote $d_{i}(x) := \dist(x, \partial_{i}^{-} \Omega_{h}
                           \cup \partial_{i}^{+} \Omega_{h} )$.
Using
  \begin{equation*}
    \sum_{i=1}^{n} \frac{1}{d^{2}_{i}(x)} \ge \max_{1 \le i \le n} \frac{1}{d_{i}(x)^{2}}
    = \frac{1}{\min_{1 \le i \le n} d_{i}(x)^{2}}
    = \frac{1}{\dist(x, \partial \Omega_{h})^{2}}
  \end{equation*}
and Lemma \ref{lm:HARDY_1D} we obtain
  \begin{align*}
  \sum_{x \in \Omega_{h}} \frac{u(x)^{2}}{\dist(x, \partial \Omega_{h})^{2}}
    & \le \sum_{i=1}^{n} \sum_{x \in \Omega_{h}} \frac{u(x)^{2}}{d_{i}(x)^{2}}
     = \sum_{i=1}^{n} \sum_{x \in \partial_{i}^{-}\Omega_h} \sum_{s=1}^{l_{i}-k_{i}-1}
      \frac{u(x+s \delta_{i})^{2}}{d_{i}(x + s \delta_{i})^{2}} \\
    &\le 4 \sum_{i=1}^{n} \sum_{x \in \partial_{i}^{-} \Omega_h} \sum_{s=0}^{l_{i}-k_{i}-1}
    \left( D^{+}_{i} u(x + s \delta_{i}) \right)^{2}
    = 4 \sum_{i=1}^{n} \sum_{x \in \overline{\Omega}_{h} \setminus \partial_{i}^{+} \Omega_{h}}
     \left( D_{i}^{+}(x) \right)^{2}.
  \end{align*}
\qed


\end{document}